\newtheorem{theorem}{Theorem}[section]
\newtheorem{definition}[theorem]{Definition}
\newtheorem{lemma}[theorem]{Lemma}
\newtheorem{proposition}[theorem]{Proposition}
\newtheorem{corollary}[theorem]{Corollary}
\newtheorem{remark}[theorem]{Remark}
\newcommand{\hh}{{\mathbb{H}}}
\newcommand{\cc}{{\mathbb{C}}}
\newcommand{\rr}{{\mathbb{R}}}
\newcommand{\nn}{{\mathbb{N}}}
\newcommand{\s}{{\mathbb{S}}}
\newcommand{\z}{{\mathcal{Z}}}
\newcommand{\f}{{\mathcal{F}}}
\newcommand{\mr}{{\mathcal{M}}}
\newcommand{\rft}{{\mathfrak{G}}}
\newcommand{\I}{{\mathrm{Id}}}
\newcommand{\B}{{\mathbb{B}}}
\newcommand{\reg}{\frak{Reg}(\B,\B)}
\title{\bf The Schwarz-Pick lemma for slice regular functions}
\author{Cinzia Bisi\footnote{Partially supported by GNSAGA of the INdAM and by FIRB ``Geometria Differenziale Complessa e Dinamica Olomorfa''.}\\
\normalsize Universit\`a degli Studi di Ferrara\\
\normalsize Dipartimento di Matematica e Informatica\\
\normalsize Via Machiavelli 35, 44121 Ferrara, Italy\\
\normalsize cinzia.bisi@unife.it
\and
Caterina Stoppato$^*$ \footnote{Partially supported by FSE and by Regione Lombardia.}\\ 
\normalsize Universit\`a degli Studi di Milano\\
\normalsize Dipartimento di Matematica ``F. Enriques''\\
\normalsize Via Saldini 50, 20133 Milano, Italy\\  
\normalsize caterina.stoppato@unimi.it\\}
\date{  }
\begin{document}
\maketitle


\begin{abstract}
The celebrated Schwarz-Pick lemma for the complex unit disk is the basis for the study of hyperbolic geometry in one and in several complex variables. In the present paper, we turn our attention to the quaternionic unit ball $\B$. We prove a version of the Schwarz-Pick lemma for self-maps of $\B$ that are slice regular, according to the definition of Gentili and Struppa. The lemma has interesting applications in the fixed-point case, and it generalizes to the case of vanishing higher order derivatives.
\end{abstract}



\section{Introduction} \label{classicM}

In the complex case, holomorphy plays a crucial role in the study of the intrinsic geometry of the unit disc $\Delta = \{z \in \cc: |z|<1\}$ thanks to the Schwarz-Pick lemma \cite{pick2,pick1}.

\begin{theorem}
Let $f:\Delta \to \Delta$ be a holomorphic function and let $z_0 \in \Delta$. Then
\begin{equation}
\left|\frac{f(z)-f(z_0)}{1-\overline{f(z_0)}f(z)}\right| \leq \left|\frac{z-z_0}{1-\bar z_0 z}\right|,
\end{equation}
for all $z \in \Delta$ and
\begin{equation}\label{complexdiff}
\frac{\left|f'(z_0)\right|}{1-|f(z_0)|^2} \leq \frac{1}{1-|z_0|^2}.
\end{equation}
All inequalities are strict for $z \neq z_0$, unless $f$ is a M\"obius transformation of $\Delta$.
\end{theorem}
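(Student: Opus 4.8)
The plan is to reduce the statement to the classical Schwarz lemma by normalizing the base point with automorphisms of $\Delta$. First I would recall the Schwarz lemma itself: every holomorphic $g : \Delta \to \Delta$ with $g(0) = 0$ satisfies $|g(z)| \le |z|$ for all $z \in \Delta$ and $|g'(0)| \le 1$, with equality at some $z \ne 0$, or in the derivative bound, only when $g$ is a rotation $z \mapsto e^{i\theta} z$. This is proved by applying the maximum modulus principle to $g(z)/z$, which extends holomorphically across the origin because $g(0)=0$.

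Next, for $a \in \Delta$ I set $\varphi_a(z) = \frac{z-a}{1-\bar a z}$, an automorphism of $\Delta$ with $\varphi_a(a) = 0$, inverse $\varphi_a^{-1} = \varphi_{-a}$, and derivative $\varphi_a'(z) = \frac{1-|a|^2}{(1-\bar a z)^2}$. Given $f$ and $z_0$ as in the statement, put $g = \varphi_{f(z_0)} \circ f \circ \varphi_{z_0}^{-1}$; then $g$ is a holomorphic self-map of $\Delta$ with $g(0) = \varphi_{f(z_0)}(f(z_0)) = 0$. Applying the Schwarz lemma to $g$ and substituting $w = \varphi_{z_0}(z)$, so that $\varphi_{z_0}^{-1}(w) = z$, turns $|g(w)| \le |w|$ into
\[
\left|\frac{f(z)-f(z_0)}{1-\overline{f(z_0)}f(z)}\right| = \bigl|\varphi_{f(z_0)}(f(z))\bigr| \le \bigl|\varphi_{z_0}(z)\bigr| = \left|\frac{z-z_0}{1-\bar z_0 z}\right|,
\]
which is the first claimed inequality.

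For the infinitesimal estimate I differentiate $g$ at the origin by the chain rule: $g'(0) = \varphi_{f(z_0)}'(f(z_0))\, f'(z_0)\, (\varphi_{z_0}^{-1})'(0)$. The formula for $\varphi_a'$ gives $\varphi_{f(z_0)}'(f(z_0)) = \frac{1}{1-|f(z_0)|^2}$ and $(\varphi_{z_0}^{-1})'(0) = 1-|z_0|^2$, so the Schwarz bound $|g'(0)| \le 1$ becomes
\[
\frac{|f'(z_0)|\,(1-|z_0|^2)}{1-|f(z_0)|^2} \le 1,
\]
which is \eqref{complexdiff}. The rigidity statement transfers through the conjugation: equality in either bound forces $g$ to be a rotation $w \mapsto e^{i\theta}w$, hence $f = \varphi_{f(z_0)}^{-1} \circ g \circ \varphi_{z_0}$ is a composition of automorphisms of $\Delta$, i.e.\ a M\"obius transformation of $\Delta$; conversely, if $f$ is a M\"obius transformation then $g$ is one fixing $0$, hence a rotation, and all the inequalities become equalities.

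There is no deep obstacle once the Schwarz lemma is available; the only points that need care are the bookkeeping of the composition defining $g$ and the substitution $w = \varphi_{z_0}(z)$ that yields the first inequality, together with the faithful transport of the equality case. These are precisely the steps that must be re-examined in the quaternionic setting, where composition of slice regular functions need not be slice regular.
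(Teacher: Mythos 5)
Your proof is correct and complete: it is the standard reduction of the Schwarz-Pick lemma to the Schwarz lemma by conjugating with the disk automorphisms $\varphi_{z_0}$ and $\varphi_{f(z_0)}$, and all the computations (the inverse $\varphi_a^{-1}=\varphi_{-a}$, the derivative $\varphi_a'(z)=\tfrac{1-|a|^2}{(1-\bar a z)^2}$, the chain rule at $0$, and the transport of the rigidity statement) check out. Note that the paper does not actually prove this statement: it is quoted as classical background from Pick, so there is no in-paper argument to compare against. It is worth observing, though, that the paper's quaternionic analog (theorem \ref{teo1} and theorem \ref{MainSchwarzPick}) cannot follow your route literally, for exactly the reason you flag at the end: pre-composition with a M\"obius transformation does not preserve slice regularity, so one cannot normalize the base point to $0$. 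Instead the paper proves the Schwarz-type estimate directly at the general base point $q_0$ by applying the maximum modulus principle to the regular quotient $\mr_{q_0}^{-*}*f$, and replaces the outer composition $\varphi_{f(z_0)}\circ f$ by the regular product $(f(q)-f(q_0))*(1-\overline{f(q_0)}*f(q))^{-*}$, which is legitimate by proposition \ref{transform}. Your proposal is thus the right classical model, with the caveat that only half of it (the target-side normalization) survives the passage to the quaternionic setting.
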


Some well-known consequences concern the rigidity of holomorphic self-maps of $\Delta$. For instance: 
\begin{corollary}
A holomorphic $f:\Delta \to \Delta$ having more than one fixed point in $\Delta$ must be the identity. 
\end{corollary}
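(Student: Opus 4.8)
\textit{Proof proposal.}
The plan is to argue by contradiction using the rigidity clause of the Schwarz-Pick lemma. Suppose $f$ has two distinct fixed points $z_0, z_1 \in \Delta$, $z_0 \neq z_1$. I would apply the Schwarz-Pick lemma with base point $z_0$: since $f(z_0) = z_0$ and $f(z_1) = z_1$, evaluating the first inequality at $z = z_1$ gives
$$\left|\frac{z_1-z_0}{1-\bar z_0 z_1}\right| = \left|\frac{f(z_1)-f(z_0)}{1-\overline{f(z_0)}f(z_1)}\right| \leq \left|\frac{z_1-z_0}{1-\bar z_0 z_1}\right|,$$
so equality holds. As $z_1 \neq z_0$, the rigidity part of the lemma forces $f$ to be a M\"obius transformation of $\Delta$.

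It then remains to observe that a M\"obius transformation of $\Delta$ fixing two distinct points of $\Delta$ is the identity. I would verify this by conjugation: pick a M\"obius transformation $\varphi$ of $\Delta$ with $\varphi(z_0)=0$ and set $g=\varphi\circ f\circ\varphi^{-1}$, which is again a M\"obius transformation of $\Delta$, now fixing $0$ and fixing $w:=\varphi(z_1)\neq 0$. Every M\"obius transformation of $\Delta$ fixing the origin has the form $z\mapsto e^{i\theta}z$, and $e^{i\theta}w=w$ with $w\neq 0$ yields $e^{i\theta}=1$; hence $g=\mathrm{Id}$ and therefore $f=\mathrm{Id}$, contradicting the hypothesis. (Equivalently, a non-identity M\"obius transformation of $\Delta$, viewed on the Riemann sphere, has at most one fixed point in $\Delta$, its other fixed point being the reflection $1/\bar z_0$ across $\partial\Delta$, hence outside $\overline{\Delta}$.)

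No step here is a genuine obstacle; the only point requiring a little care is that the equality case of the Schwarz-Pick lemma is really triggered, which is ensured by $z_1 \neq z_0$, after which the elementary normal form of disk automorphisms finishes the argument. It is worth noting, though, that the analogous statement in the quaternionic setting developed below will demand a new argument, since both the equality case of the regular Schwarz-Pick lemma and the fixed-point structure of M\"obius transformations of $\B$ behave rather differently.
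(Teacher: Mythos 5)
Your proof is correct. The paper states this corollary in the introduction as classical background without supplying a proof, and your argument is the standard one: the equality case of the Schwarz--Pick lemma at the second fixed point forces $f$ to be a M\"obius transformation of $\Delta$, and conjugation to a rotation fixing a nonzero point finishes it. It is also, in substance, the same two-step strategy the paper later deploys for the quaternionic analogue in Theorem \ref{fixedpoints} (rigidity from the Schwarz--Pick lemma, then a separate verification that a M\"obius transformation with two fixed points in the ball is the identity), so nothing further is needed.
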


Furthermore, for any holomorphic $f: \Delta \to \Delta$ such that $f(z_0)=z_1$ for fixed $z_0,z_1 \in\Delta$,
the modulus $|f'(z_0)|$ cannot exceed $\frac{1-|z_1|^2}{1-|z_0|^2}$ and it reaches this value if and only if it is a M\"obius transformation, \cite{vesentini}. This implies the following special case of the Cartan-Carath\'eodory theorem.
\begin{theorem}
Let $f$ be a holomorphic self-map of $\Delta$. If $z_0$ is a fixed point of $f$ and $f'(z_0) = 1$ then $f$ coincides with the identity function.
\end{theorem}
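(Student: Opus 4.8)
The plan is to extract everything from the equality (rigidity) case of the Schwarz-Pick lemma already recorded above. Since $f(z_0)=z_0$, inequality \eqref{complexdiff} reads $|f'(z_0)| \leq 1$; the hypothesis $f'(z_0)=1$ forces equality, so by the rigidity clause of the Schwarz-Pick lemma $f$ must be a M\"obius transformation of $\Delta$, i.e.\ an automorphism $f(z) = e^{i\theta}\frac{z-a}{1-\bar a z}$ for suitable $\theta \in \mathbb{R}$ and $a \in \Delta$. It then remains only to single out, among these automorphisms, the one having $z_0$ as a fixed point with unit derivative there.

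To do this I would conjugate to the origin. Let $\phi(z) = \frac{z-z_0}{1-\bar z_0 z}$, so that $\phi$ is an automorphism of $\Delta$ with $\phi(z_0)=0$ and $\phi^{-1}(0)=z_0$, and set $g = \phi \circ f \circ \phi^{-1}$. Then $g$ is an automorphism of $\Delta$ with $g(0)=0$, and the chain rule together with $(\phi^{-1})'(0) = 1/\phi'(z_0)$ gives $g'(0) = \phi'(z_0)\, f'(z_0)\, (\phi^{-1})'(0) = 1$. By the equality case of the classical Schwarz lemma, an automorphism of $\Delta$ fixing the origin is a rotation $g(w) = \lambda w$ with $|\lambda|=1$; here $\lambda = g'(0) = 1$, so $g = \mathrm{id}_\Delta$ and therefore $f = \phi^{-1} \circ g \circ \phi = \mathrm{id}_\Delta$.

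The computation has no genuinely hard step: the only inputs are the rigidity case of Schwarz-Pick, already quoted, and the elementary description of the stabiliser of a point in $\mathrm{Aut}(\Delta)$ as a one-parameter rotation group. If one prefers to avoid even the latter, the same reduction $g = \phi \circ f \circ \phi^{-1}$ (with $g(0)=0$, $g'(0)=1$) can be combined with an iteration argument: writing $g(z) = z + a_k z^k + O(z^{k+1})$ with $k\ge 2$ and $a_k$ the first nonvanishing higher-order coefficient, induction on the iterates $g_n = g\circ\cdots\circ g$ yields $g_n(z) = z + n a_k z^k + O(z^{k+1})$, while $|g_n|<1$ on $\Delta$ forces $|n a_k|\le 1$ for every $n$ by Cauchy's estimates, a contradiction; hence $g(z)=z$. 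In either route the one point deserving care is the chain-rule check that conjugation preserves the value $1$ of the derivative at the fixed point — after that the proof is routine bookkeeping.
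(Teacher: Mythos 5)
Your primary argument is correct and is exactly the route the paper sketches: the fixed-point hypothesis turns \eqref{complexdiff} into $|f'(z_0)|\leq 1$, equality forces $f$ to be a M\"obius transformation by the rigidity clause, and conjugating by $\phi(z)=(z-z_0)(1-\bar z_0 z)^{-1}$ reduces to the equality case of the Schwarz lemma at the origin, where the rotation must have multiplier $g'(0)=1$. Your alternative iteration argument ($g_n(z)=z+na_kz^k+O(z^{k+1})$ with Cauchy estimates) is also correct and is in fact stronger, since it needs no rigidity input and applies to arbitrary self-maps, but it is not required here.
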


These are the bases for the study of hyperbolic geometry in one and in several complex variables. We refer to \cite{abate,vesentini} for the foundations of this beautiful theory.

Versions of the Schwarz lemma have been proven for the open unit ball 
$$\mathbb{B}=\{q \in \hh\ |\  |q|<1\}$$
of the real space of quaternions $\hh$.  
Within the theory of regular quaternionic functions introduced by Fueter in \cite{fueter1,fueter2}, which has long been the most successful analog of holomorphy over the quaternions, the article \cite{yang} presents a version of the Schwarz lemma for functions $\hh \setminus \overline{\B} \to \B$ that map $\infty$ to $0$ and that are Fueter-regular, i.e., that lie in the kernel of $\frac{\partial}{\partial x_0}+i\frac{\partial}{\partial x_1}+ j\frac{\partial}{\partial x_2}+k\frac{\partial}{\partial x_3}$. (More generally, the analog of the Schwarz lemma presented in \cite{yang} is concerned with functions over the Clifford Algebras $Cl(0,m)$). See \cite{librosommen,sudbery} for the foundations of Fueter's theory and of its generalization to the Clifford setting.

Another theory of quaternionic functions, introduced in \cite{cras,advances}, is based on a different notion of regularity.

\begin{definition}
Let $\Omega$ be a domain in $\hh$ and let $f : \Omega \to \hh$ be a function. For all $I \in \s = \{q \in \hh \ |\  q^2 = -1\}$, let us denote $L_I = \rr + I \rr$, $\Omega_I = \Omega \cap L_I$ and $f_I = f_{|_{\Omega_I}}$. 
The function $f$ is called (Cullen or) \emph{slice regular} if, for all $I \in \s$, the restriction $f_I$ is real differentiable and the function $\bar \partial_I f : \Omega_I \to \hh$ defined by
$$
\bar \partial_I f (x+Iy) = \frac{1}{2} \left( \frac{\partial}{\partial x}+I\frac{\partial}{\partial y} \right) f_I (x+Iy)
$$
vanishes identically.
\end{definition}

The same articles introduce the \emph{Cullen derivative} $\partial_cf$ of a slice regular function $f$ as
\begin{equation}\label{cullen}
\partial_cf(x+Iy)=\frac{1}{2}\left(\frac{\partial}{\partial x}-I\frac{\partial}{\partial y}\right)f(x+Iy)
\end{equation}
for $I \in \s,\ x,y \in \rr$, and they present an analog of the Schwarz lemma.

\begin{theorem}
Let $f : \B \to \B$ be a slice regular function. If $f(0) = 0$ then 
\begin{equation}
|f(q)|\leq |q|
\end{equation}
for all $q \in \B$ and 
\begin{equation}\label{R-schwarzeq}
|\partial_c f(0)|\leq 1. 
\end{equation}
Both inequalities are strict (except at $q=0$) unless $f(q) = q u$ for some $u \in \partial \B = \{q \in \hh\ |\ |q|=1\}$.
\end{theorem}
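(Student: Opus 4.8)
The plan is to reduce to the classical one–variable situation by restricting $f$ to a complex slice, after first factoring out the zero of $f$ at the origin. Since $f$ is slice regular on $\B$, it is the sum on $\B$ of a power series $f(q)=\sum_{n\geq 0}q^n a_n$ with $a_n\in\hh$, and $\partial_c f(0)=a_1$. The hypothesis $f(0)=a_0=0$ then yields
\begin{equation}
f(q)=q\,g(q),\qquad g(q):=\sum_{n\geq 0}q^n a_{n+1},
\end{equation}
where $g$ is again slice regular on $\B$ and $g(0)=a_1=\partial_c f(0)$. Because $|f(q)|=|q|\,|g(q)|$, both asserted inequalities will follow at once from the single estimate $|g(q)|\leq 1$ for every $q\in\B$.

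To prove that estimate, fix $I\in\s$ and set $\B_I=\B\cap L_I$. The restriction $g_I=g|_{\B_I}$ is holomorphic from the disk $\B_I\subset L_I\cong\cc$ to $\hh\cong\cc^2$ (indeed $g_I(z)=z^{-1}f_I(z)$ extends holomorphically across $0$); choosing $J\in\s$ with $J\perp I$ and writing $g_I=F+GJ$ with $F,G\colon\B_I\to L_I$ holomorphic, we get $|g_I|^2=|F|^2+|G|^2$, whose Laplacian equals $4(|F'|^2+|G'|^2)\geq 0$, so $|g_I|^2$ is subharmonic on $\B_I$ and obeys the maximum principle. For $0<r<1$ and $|z|=r$ we have $|g_I(z)|=|f_I(z)|/r\leq 1/r$ since $f$ is $\B$-valued; applying the maximum principle on the closed disk of radius $r$ and letting $r\to 1$ gives $|g_I(z)|\leq 1$ on $\B_I$. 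As $I\in\s$ is arbitrary and every point of $\B$ lies on some slice $L_I$, we conclude $|g(q)|\leq 1$ on $\B$, whence $|f(q)|\leq|q|$ and $|\partial_c f(0)|=|g(0)|\leq 1$.

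For the equality statement, suppose $|f(q_0)|=|q_0|$ for some $q_0\in\B\setminus\{0\}$, or $|\partial_c f(0)|=1$. In either case $|g|$ attains the value $1$ at an interior point of $\B$; choosing $I$ so that this point lies in $L_I$, the subharmonic function $|g_I|^2\leq 1$ attains its maximum at an interior point of $\B_I$, hence is constant, $|F|^2+|G|^2\equiv 1$. Then $|F|^2$ and $|G|^2$ are nonnegative subharmonic functions with constant sum, so each is harmonic, forcing $F'\equiv 0$ and $G'\equiv 0$; thus $g_I$ is a constant $u$ with $|u|=1$. By the identity principle for slice regular functions on $\B$, $g$ agrees everywhere with the constant $u\in\partial\B$, i.e. $f(q)=q\,u$. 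Conversely, if $f(q)=q\,u$ with $u\in\partial\B$ then $|f(q)|=|q|$ and $|\partial_c f(0)|=|u|=1$; this proves that both inequalities are strict for $q\neq 0$ unless $f$ has this form.

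The one genuine departure from the classical proof — hence the main obstacle — is that $g_I$ takes values in $\hh\cong\cc^2$, not in $\cc$, so $|g_I|$ is not the modulus of a scalar holomorphic function and the complex Schwarz lemma cannot be quoted verbatim; this is circumvented by working instead with the elementary maximum principle for the subharmonic function $|g_I|^2$, and by invoking the identity principle for slice regular functions to spread the rigidity from one slice to all of $\B$.
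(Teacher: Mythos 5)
Your proof is correct. It follows the same classical Schwarz-lemma strategy that the paper uses for its generalization (Theorem 3.2): factor out the zero, bound the quotient on circles $|q|=r$, let $r\to 1$, and invoke a maximum principle for the rigidity statement. The two genuine differences are worth noting. First, because the zero sits at the origin, you can write $f(q)=q\,g(q)$ as an honest pointwise product (powers of $q$ commute with each other), so $|f(q)|=|q|\,|g(q)|$ holds with no $*$-product machinery; the paper's argument for a zero at a general $q_0$ must instead work with $\mr_{q_0}^{-*}*f$ and the transformation $q\mapsto h(q)^{-1}qh(q)$ from Theorem 2.7 to control moduli of $*$-products. Second, where the paper quotes the maximum modulus principle for regular functions (Theorem 3.1, from the literature) as a black box both for the bound and for the equality case, you re-derive exactly the instance you need by splitting $g_I=F+GJ$ on each slice and using subharmonicity of $|F|^2+|G|^2$, then propagate the rigidity from one slice to all of $\B$ via the identity principle. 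This makes your argument more self-contained and elementary, at the cost of being tied to the special position of the zero; the paper's route is the one that scales to arbitrary $q_0$ and to the full Schwarz--Pick statement. All the individual steps you use (the Splitting Lemma, $\Delta|F|^2=4|F'|^2$, the subharmonic maximum principle, the identity principle for slice regular functions vanishing on a slice) are standard and correctly applied, including the equality analysis where constancy of $|F|^2+|G|^2$ forces $F'\equiv G'\equiv 0$.
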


We are presently interested in recovering the full Schwarz-Pick lemma for $\B$.
It is known in literature that the set $\mathbb{M}$ of \emph{(classical) M\"obius transformations} of $\B$,
\begin{equation}\label{InvM}
\mathbb{M}=\left\{g(q) = v(q-q_0) (1-\bar q_0q)^{-1} u: u,v \in \partial \B, q_0 \in \B\right\},
\end{equation}
is a group with respect to the composition operation and that it is isomorphic to $Sp(1,1)/\{\pm \I\}$. We recall that
$$Sp(1,1) = \{C \in GL(2, \hh) \ |\  \overline C ^t H C = H\}\leq SL(2, \hh),$$
where $H = 
\begin{bmatrix}
1 & 0 \\
0 & -1
\end{bmatrix}$, $GL(2,\hh)$ denotes the group of $2 \times 2$ invertible quaternionic matrices, and $SL(2,\hh)$ denotes the subgroup of those such matrices which have unit Dieudonn\'e determinant (for details, see \cite{poincare} and references therein). Among the works that treat this matter, even in the more general context of Clifford Algebras, let us mention \cite{ahlforsmoebius, maass, vahlen}. 

The group $\mathbb{M}$, and more in general the group of classical linear fractional transformations $q \mapsto (aq+b) (cq+d)^{-1}$, is not included in Fueter's class. The identity function and the rotations $q \mapsto vq$ and $q \mapsto qu$ with $u,v \in \partial \B$ are examples of classical M\"obius transformations that are not Fueter-regular. Thanks to a result of \cite{sudbery}, and following \cite{perotti2009}, one can associate to each transformation $g(q) =   v(q-q_0) (1-\bar q_0q)^{-1} u$ in $\mathbb{M}$ the Fueter-regular function
$$G(q) = \frac{(1-\bar q_0 q)^{-1}}{|1-\bar q_0 q|^2}\,  u\,\gamma(g(q)),$$
where $\gamma$ is the Fueter-regular function 
$$\gamma(x_0+ix_1+jx_2+kx_3)=x_0+ix_1+jx_2-kx_3.$$
However, the function $G$ is not, in general, a self-map of $\B$.
The variant of the Fueter class considered in \cite{perotti2009}, defined as the kernel of $\frac{\partial}{\partial x_0}+i\frac{\partial}{\partial x_1}+ j\frac{\partial}{\partial x_2}-k\frac{\partial}{\partial x_3}$, includes the rotations $q \mapsto v q u$ for all $u \in \partial \B$ and for every $v \in \partial \B$ that is reduced, i.e., whose component along $k$ vanishes. However, the treatment of the rest of the classical M\"obius transformations encounters the same kind of difficulties as in Fueter's case. 

On the other hand, the class of slice regular functions includes the transformations $q \mapsto (q-q_0)(1-q_0q)^{-1}u$ for $u \in \partial \B$ and $q_0$ in the real interval $(-1,1)$. It does not contain the whole group $\mathbb{M}$, but \cite{moebius} introduced the new class of \emph{(slice) regular M\"obius transformations} of $\B$, which are nicely related to the classical ones. They are presented in detail in section \ref{regularmoebius}, which also illustrates several operations that preserve slice regularity: the multiplication $f(q)*g(q)$ of $f(q)$ and $g(q)$, with respect to which every $g \not \equiv 0$ admits an inverse $g(q)^{-*}$; the conjugation $f^c(q)$; and the symmetrization $f^s(q) = f(q)*f^c(q)$.

In section \ref{schwarzpick}, we prove a quaternionic analog of the Schwarz-Pick lemma, which discloses the possibility of using slice regular functions in the study of the intrinsic geometry of $\B$. Before stating our main result, let us recall  the basic notions concerning the real differential of a slice regular function. At a real point $x_0$, it acts by right multiplication by the Cullen derivative $\partial_cf(x_0)$, while at a point $q_0 = x_0+Iy_0 \in L_I$ with $I \in \s, x_0,y_0 \in \rr$ and $y_0 \neq 0$ it has been thus characterized in \cite{expansion}: if we split the tangent space $T_{q_0}\Omega\cong \hh = \rr^4$ as $L_I \oplus L_I^\perp$ (with respect to the standard scalar product), then the differential of $f$ at $q_0$ acts on $L_I$ by right multiplication by $\partial_cf(q_0)$; on $L_I^\perp$, it acts by right multiplication by the \emph{spherical derivative}
\begin{equation}
\partial_sf(q_0) = (2Im(q_0))^{-1}(f(q_0)-f(\bar q_0))
\end{equation}
defined in \cite{perotti}. We recall that the Cullen derivative is a slice regular function, while the spherical derivative is slice regular only when it is constant. The quaternions $\partial_cf(q_0)$ and $\partial_sf(q_0)$ can be computed as the values at $q_0$ and $\bar q_0$ of a unique slice regular function, which we may call the \emph{differential quotient} of $f$ at $q_0$:

\begin{remark}
Let $f$ be a slice regular function on $B(0,R) = \{q \in \hh \ |\  |q| <R\}$. If, for all $q_0 \in \Omega$, we denote as
\begin{equation}\label{R}
R_{q_0}f(q) = (q-q_0)^{-*}*(f(q)-f(q_0))
\end{equation}
then $\partial_cf(q_0) = R_{q_0}f(q_0)$ and $\partial_sf(q_0)= R_{q_0}f(\bar q_0)$.
\end{remark}

We are now in a position to state the main result of the present article.

\begin{theorem}[Schwarz-Pick lemma] 
Let $f : \B \to \B$ be a regular function and let $q_0 \in \B$. Then in $\B$
\begin{eqnarray}
|(f(q)-f(q_0))*(1-\overline{f(q_0)}*f(q))^{-*}| &\leq& |(q-q_0)*(1-\bar q_0*q)^{-*}| \\
|R_{q_0}f(q)*(1-\overline{f(q_0)}*f(q))^{-*}| &\leq& |(1-\bar q_0*q)^{-*}| 
\end{eqnarray}
Moreover,
\begin{eqnarray}
|\partial_c f *(1-\overline{f(q_0)}*f(q))^{-*}|_{|_{q_0}} &\leq& \frac{1}{1-|q_0|^2}\\
\frac{|\partial_s f(q_0)|}{|1-f^s(q_0)|} &\leq& \frac{1}{|1-\overline{q_0}^2|}
\end{eqnarray}
If $f$ is a slice regular M\"obius transformation of $\B$ then equality holds in the previous formulas. Else, all the aforementioned inequalities are strict (except for the first one at $q_0$, which reduces to $0\leq0$).
\end{theorem}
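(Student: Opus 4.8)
**The plan is to reduce the quaternionic Schwarz-Pick lemma to the classical complex one via slice restriction, then upgrade the "equality" analysis using the regular Möbius transformation machinery.**

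First I would set up the reduction to one complex variable. Fix $I \in \s$ with $q_0 \in L_I$ (if $q_0$ is real, any $I$ works). The key observation is that the $*$-product and the $*$-inverse are designed precisely so that, when evaluated along the slice $L_I$ on which $q_0$ lies, they behave like pointwise products — more precisely, the regular quotient $R_{q_0}f$ and the combinations $(f(q)-f(q_0))*(1-\overline{f(q_0)}*f(q))^{-*}$ restrict on $L_I$ to genuine holomorphic functions of one complex variable when $f(q_0)$ also happens to lie on $L_I$. In general $f(q_0)$ need not lie on $L_I$, so I would first compose with a regular Möbius transformation $M$ of $\B$ sending $f(q_0)$ to $0$; since $\m$ (and the regular Möbius group from section \ref{regularmoebius}) acts transitively and preserves $\B$, and composition with it behaves well under the $*$-product structure, it suffices to prove the statement when $f(q_0) = 0$. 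In that case the left-hand sides simplify: $(1-\overline{f(q_0)}*f(q))^{-*} = 1$, so the first two inequalities become $|f(q)| \le |(q-q_0)*(1-\bar q_0 * q)^{-*}|$ and $|R_{q_0}f(q)| \le |(1-\bar q_0*q)^{-*}|$.

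Next I would exploit that both $f$ and the regular Möbius transformation $g_{q_0}(q) = (q-q_0)*(1-\bar q_0*q)^{-*}$ restrict, on the slice $L_I \cong \cc_I$, to holomorphic functions $f_I, (g_{q_0})_I : \Delta_I \to \Delta_I$, where $\Delta_I = \B \cap L_I$ is a complex disc. Applying the classical Schwarz-Pick lemma to $f_I$ with base point $q_0$ yields the modulus inequality at every $q \in \Delta_I$, and since $|f(q)|$ for $q = x+Iy$ is controlled by the values of $f$ on the slice $L_I$ through $q$ (here is where one uses the Representation Formula to pass from one slice to all of $\hh$), the inequality $|f(q)| \le |g_{q_0}(q)|$ propagates to all of $\B$. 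The derivative inequalities \eqref{complexdiff} applied to $f_I$ give $|f_I'(q_0)|/(1-|f(q_0)|^2) \le 1/(1-|q_0|^2)$; translating $f_I'(q_0)$ into $\partial_c f(q_0)$ (they agree at points of the slice by \eqref{cullen}) gives the third inequality, and the fourth, concerning $\partial_s f(q_0)$, follows by applying the same estimate on the conjugate slice / at $\bar q_0$ and combining via the Remark identifying $\partial_c f(q_0), \partial_s f(q_0)$ with the values of $R_{q_0}f$ at $q_0, \bar q_0$; the symmetrization $f^s$ enters because $|1-f^s(q_0)| = |1 - f(q_0)| \cdot |1-f(\bar q_0)|$ along the relevant slice, matching the denominator produced by the complex estimate at the two conjugate points.

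For the equality case, I would argue as follows. If $f$ is a regular Möbius transformation of $\B$, then $f_I$ is a classical Möbius transformation of $\Delta_I$ (this is essentially the content of the relation between regular and classical Möbius transformations recalled in section \ref{regularmoebius}), so equality holds in the classical Schwarz-Pick lemma on $L_I$, and since the inequalities above were all obtained by slice restriction, equality holds in all four formulas. Conversely, if equality holds somewhere (not at the trivial point $q_0$ for the first inequality), then by the rigidity clause of the classical Schwarz-Pick lemma, $f_I$ must be a classical Möbius transformation of $\Delta_I$; one then uses that a slice regular function whose restriction to one slice is a (classical) Möbius transformation of that slice disc must itself be a regular Möbius transformation of $\B$ — this is where the structure theory of $\reg$ and the identity principle for slice regular functions are invoked to conclude. \textbf{The main obstacle} I anticipate is precisely the bookkeeping in this last implication and, more delicately, handling the non-commutativity when $f(q_0) \notin L_I$: one must verify that the $*$-products and $*$-inverses appearing in the statement genuinely reduce to the pointwise complex quantities after the Möbius normalization, i.e., that the normalization by a regular Möbius transformation interacts correctly with the $*$-operations and with passing to the slice $L_I$ — this requires care with the regular product formula $f*g(q) = f(q)\,g(f(q)^{-1}qf(q))$ and with the fact that $\overline{f(q_0)}$ and $f(q_0)$ generate the same slice, so that one can choose $I$ adapted to $f(q_0)$ rather than to $q_0$ when convenient.
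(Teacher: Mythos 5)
Your plan rests on the claim that, after normalizing $f(q_0)$ to $0$, the restriction $f_I$ is a holomorphic self-map of the disc $\Delta_I=\B\cap L_I$, to which the one-variable Schwarz--Pick lemma applies. This is false: a slice regular $f:\B\to\B$ restricted to $L_I$ takes values in all of $\B\subset\hh$, not in $L_I$ (e.g.\ $f(q)=qj$ restricted to $L_i$ has values $xj+yk\notin L_i$), and normalizing the single value $f(q_0)$ does not confine the other values to the slice. By the splitting lemma one only gets a holomorphic map $\Delta_I\to\cc^2$ into the unit ball of $\cc^2$, so the one-variable lemma is unavailable; and since $f$ is not $L_I$-valued on $L_I$, the $*$-products and $*$-inverses in the statement do \emph{not} reduce to pointwise complex quotients on the slice, so even the formulation of the ``slice inequality'' you want to prove is not what the classical lemma would give. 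Two further steps are also broken: (i) composition of slice regular functions is not slice regular, so ``composing with a regular M\"obius transformation sending $f(q_0)$ to $0$'' is not a legitimate normalization --- the paper replaces it by the $*$-construction $\tilde f=(f-f(q_0))*(1-\overline{f(q_0)}*f)^{-*}$, which is a self-map of $\B$ only because of the nontrivial Proposition \ref{transform}, and whose value at $q$ is not the M\"obius image of $f(q)$ but involves evaluation at a rotated point of the sphere through $q$ (Theorem \ref{quotients}); (ii) the propagation ``from one slice to all of $\B$'' via the Representation Formula gives averaged bounds over the sphere $x+y\s$, whereas the right-hand side $|(q-q_0)*(1-\bar q_0*q)^{-*}|$ is not constant on such spheres when $q_0\notin\rr$, so the pointwise inequality does not follow.

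The paper's actual proof is intrinsically quaternionic and bypasses all of this: for $\tilde f$ with $\tilde f(q_0)=0$ one factors $\tilde f=(q-q_0)*R_{q_0}\tilde f$, shows $\mr_{q_0}^{-*}*\tilde f$ is regular and bounded by $1$ using the conjugation formula of Theorem \ref{quotients} together with the maximum modulus principle (Theorem \ref{maximum}), then recovers the four inequalities via Lemma \ref{modulusproduct} and the Leibniz rules for $\partial_c$ and $\partial_s$; the rigidity clause comes from the maximum modulus principle, not from complex rigidity on a slice. Your fourth inequality is also off: $|1-f^s(q_0)|$ is not $|1-f(q_0)|\cdot|1-f(\bar q_0)|$; the paper obtains it from the spherical Leibniz rule and the identity $f(q_0)*f^c(q)|_{q=q_0}=f^s(q_0)$. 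If you want to salvage a slice-based argument you would need the Schwarz--Pick lemma for maps $\Delta\to\mathbb{B}^2\subset\cc^2$ plus a separate argument for the $*$-product left-hand sides, which amounts to redoing the paper's work.
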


We conclude section \ref{schwarzpick} computing a point $\tilde q_0$ with $Re(\tilde q_0) = Re(q_0)$ and $|Im(\tilde q_0)| = |Im(q_0)|$ such that
$$|\partial_c f *(1-\overline{f(q_0)}*f(q))^{-*}|_{|_{q_0}} = \frac{|\partial_c f(q_0)|}{|1-\overline{f(q_0)}f(\tilde q_0)|}.$$

As an application of the main theorem, in section \ref{sectionapplications} we obtain direct proofs of the quaternionic analogs of the Cartan Rigidity theorems mentioned at the beginning of this introduction. Versions of these results have been proven in \cite{rigidity}, and our new approach allows to strengthen their statements.

\begin{theorem}
Let $f : \B \to \B$ be a slice regular function and suppose $f$ to have a fixed point $q_0 \in \B$. Then either $f$ is the identity function, or $f$ has no other fixed point in $\B$.
\end{theorem}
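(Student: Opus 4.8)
The plan is to feed a second fixed point into the Schwarz-Pick lemma, reducing the statement to a rigidity property of slice regular M\"obius transformations. Assume $f$ has two distinct fixed points $q_0,q_1\in\B$; the goal is to show that $f$ is the identity. I would first record the first inequality of the Schwarz-Pick lemma with base point $q_0$ as $|g(q)|\leq|\mathcal{N}(q)|$ for all $q\in\B$, where $g(q)=(f(q)-f(q_0))*(1-\overline{f(q_0)}*f(q))^{-*}$ is regular on $\B$ and $\mathcal{N}(q)=(q-q_0)*(1-\bar q_0*q)^{-*}$ is the slice regular M\"obius transformation carrying $q_0$ to $0$; note that $g(q_0)=\mathcal{N}(q_0)=0$.

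The heart of the matter is to show that this inequality is an \emph{equality} at $q_1$. Using the formula for the value of a $*$-product recalled in section \ref{regularmoebius}, together with the relations $f(q_0)=q_0$ and $f(q_1)=q_1$, the two left-hand factors $f(q)-f(q_0)$ and $q-q_0$ take the common nonzero value $q_1-q_0$ at $q_1$; consequently both $g(q_1)$ and $\mathcal{N}(q_1)$ equal $q_1-q_0$ times the value, at the same twisted point $w_1=(q_1-q_0)^{-1}q_1(q_1-q_0)\in\B$, of $(1-\overline{f(q_0)}*f(q))^{-*}$ and of $(1-\bar q_0*q)^{-*}$ respectively. Since $\overline{f(q_0)}=\bar q_0$, it then suffices to check that $(1-\bar q_0*f(q))^{-*}$ and $(1-\bar q_0*q)^{-*}$ have equal modulus at $w_1$, which I would do by unwinding the regular reciprocal through $h^{-*}=(h^{s})^{-1}h^{c}$, expanding the symmetrizations, and invoking once more that $f$ fixes $q_1$. (When $q_0$ is real one has $w_1=q_1$ and the computation is immediate.) This yields equality in the first Schwarz-Pick inequality at the point $q_1\neq q_0$.

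By the rigidity clause of the Schwarz-Pick lemma, $f$ is then a slice regular M\"obius transformation of $\B$, and it remains to observe that such a transformation fixing two distinct points of $\B$ must be the identity. This follows from the explicit description of these maps in section \ref{regularmoebius}: writing such a transformation as $(q-a)*(1-\bar a*q)^{-*}u$ with $a\in\B$ and $u\in\partial\B$, the fixed-point equation $\mathcal{M}(q)=q$ is equivalent, after $*$-multiplying on the right by $1-\bar a*q$, to the vanishing of a regular polynomial of degree at most two in $q$; for $a\neq0$ this polynomial has the same zeros as a product $(q-\alpha)*(q-\beta)$ with $|\alpha||\beta|=1$, so at most one of them lies in $\B$, while for $a=0$ the map is a rotation $q\mapsto qu$, which fixes $0$ alone unless $u=1$. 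Since $f$ fixes both $q_0$ and $q_1$, it is the identity.

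The step I expect to be the main obstacle is the equality at $q_1$: because the $*$-product does not commute with pointwise evaluation, one must track the twisting carefully and verify that the two twisted evaluations coincide \emph{exactly}, and not merely in accordance with the Schwarz-Pick inequality --- it is precisely the combination of the two fixed-point relations $f(q_0)=q_0$ and $f(q_1)=q_1$ that forces this. A lighter, secondary point is the classification of the fixed-point sets of slice regular M\"obius transformations, which rests on their explicit description and on the factorization of degree-two regular polynomials.
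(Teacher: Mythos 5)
Your overall architecture is the same as the paper's: force equality in the Schwarz--Pick inequality at a second fixed point, invoke the rigidity clause to conclude that $f$ is a regular M\"obius transformation, and finish by classifying the fixed points of such transformations. Your last paragraph is essentially the paper's closing argument and is fine (except that you should clear the denominator starting from the left-quotient form $(1-q\bar a)^{-*}*(q-a)u$, since $*$-multiplying on the right cannot be pushed past the unit $u$). The genuine gap is exactly the step you flag as the main obstacle: the equality at $q_1$. Your reduction is correct up to writing $g(q_1)$ and $\mathcal{N}(q_1)$ as $(q_1-q_0)$ times the values of $(1-\bar q_0*f)^{-*}$ and $(1-\bar q_0*q)^{-*}$ at $w_1=(q_1-q_0)^{-1}q_1(q_1-q_0)$, but the proposed verification that these values have equal modulus cannot be extracted from the fixed-point data. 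Unwinding $h^{-*}=(h^s)^{-1}h^c$ for $h=1-\bar q_0*f$ gives $h^c(w_1)=1-f^c(w_1)q_0$ and $h^s(w_1)=1-2\Phi(w_1)+|q_0|^2f^s(w_1)$ with $\Phi(q)=\sum_n q^n\, Re(\bar q_0 a_n)$; equivalently, by theorem \ref{quotients}, $(1-\bar q_0*f)^{-*}(w_1)=(1-\bar q_0 f(p))^{-1}$ for a further twisted point $p$ of the sphere through $q_1$. All of these depend on the values of $f$, $f^c$, $f^s$ at points of $x_1+y_1\s$ \emph{other than} $q_1$ (at $w_1$, at $\bar w_1$ via the spherical value and derivative, and at $p$), none of which are controlled by $f(q_0)=q_0$ and $f(q_1)=q_1$. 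Even your ``immediate'' case of real $q_0\neq 0$ is not immediate: there $(1-q_0f)^{-*}(q_1)=(1-q_0f(p_1))^{-1}$ with $p_1=h^c(q_1)^{-1}q_1h^c(q_1)$, and the computation closes only if one also knows $f(\bar q_1)=\bar q_1$, which is not a hypothesis; only $q_0=0$ is genuinely immediate. The equality of moduli at $q_1$ is of course true a posteriori, because the theorem forces $f=id$, but that is precisely what makes a direct local derivation of it circular.

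The paper avoids this by not insisting on equality at $q_1$ itself. It factors
$$(1-f(q)\bar q_0)*(\tilde f(q)-\mr_{q_0}(q))*(1-q\bar q_0)=[f(q)-q](1-|q_0|^2)$$
and applies the zero-counting corollary \ref{productzeros}: since the outer factors vanish nowhere in $\B$, the sphere $x_1+y_1\s$ through the second fixed point contains \emph{some} zero $\tilde q_1\neq q_0$ of $\tilde f-\mr_{q_0}$ --- a twist of $q_1$, not $q_1$ in general --- and the equality $|\tilde f(\tilde q_1)|=|\mr_{q_0}(\tilde q_1)|$ at that point is all the rigidity clause requires. To repair your write-up, replace the pointwise evaluation at $q_1$ by this sphere-wise zero count.
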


\begin{theorem}
Let $f : \B \to \B$ be a slice regular function and suppose $f$ to have a fixed point $q_0 \in \B$. The following facts are equivalent:
\begin{enumerate}
\item[$(1)$] $f$ coincides with the identity function;
\item[$(2)$] the real differential of $f$ at $q_0$ is the identity;
\item[$(3)$] the Cullen derivative $\partial_c f (q_0)$ equals $1$;
\item[$(4)$] the spherical derivative $\partial_s f(q_0)$ equals $1$;
\item[$(5)$] $R_{q_0}f(q)$ equals $(1-\bar q_0 * q)^{-*}*(1-\bar q_0*f(q))$ at some $q \in \B$.
\end{enumerate}
\end{theorem}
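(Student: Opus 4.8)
The plan is to reduce all the equivalences to condition $(1)$: I will prove $(1)\Rightarrow(2)\Rightarrow(3)$, $(2)\Rightarrow(4)$ and $(1)\Rightarrow(5)$, and then $(3)\Rightarrow(1)$, $(4)\Rightarrow(1)$ and $(5)\Rightarrow(1)$, which closes every loop. The implication $(1)\Rightarrow(2)$ is immediate, and $(2)\Rightarrow(3)$, $(2)\Rightarrow(4)$ follow at once from the description of the real differential recalled in the introduction: at a non-real $q_0=x_0+Iy_0$ it acts by right multiplication by $\partial_c f(q_0)$ on $L_I$ and by $\partial_s f(q_0)$ on $L_I^{\perp}$, while at a real $q_0$ it is right multiplication by $\partial_c f(q_0)=\partial_s f(q_0)=R_{q_0}f(q_0)$ on all of $\hh$; hence the differential being the identity forces $\partial_c f(q_0)=\partial_s f(q_0)=1$. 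For $(1)\Rightarrow(5)$, when $f=\I$ one has $R_{q_0}f(q)=(q-q_0)^{-*}*(q-q_0)=1$ and $(1-\bar q_0*q)^{-*}*(1-\bar q_0*q)=1$ as well, so $(5)$ holds at every $q\in\B$.

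For $(4)\Rightarrow(1)$: if $q_0\in\rr$ then $\partial_s f(q_0)=\partial_c f(q_0)$, so $(4)$ coincides with $(3)$, treated below; if $q_0\notin\rr$, then $\partial_s f(q_0)=(2Im(q_0))^{-1}(f(q_0)-f(\bar q_0))=1$ together with $f(q_0)=q_0$ gives $f(\bar q_0)=q_0-2Im(q_0)=\bar q_0$, so $f$ fixes the two distinct points $q_0,\bar q_0\in\B$ and the preceding rigidity theorem forces $f=\I$.

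The substance is in $(3)\Rightarrow(1)$ and $(5)\Rightarrow(1)$, both of which I would run through the equality case of the Schwarz-Pick lemma. The key computational fact is that, whenever $f(q_0)=q_0$, the regular function $(1-\overline{f(q_0)}*f)^{-*}=(1-\bar q_0*f)^{-*}$ takes the \emph{real} value $\tfrac{1}{1-|q_0|^{2}}$ at $q_0$: writing the $*$-inverse as $\bigl((1-\bar q_0*f)^{s}\bigr)^{-1}*(1-\bar q_0*f)^{c}$, evaluating the $*$-products through the rule $(g*h)(q)=g(q)\,h\bigl(g(q)^{-1}qg(q)\bigr)$, and using that the symmetrization is slice-preserving, that $q_0$ and $\bar q_0$ commute, and that $(1-\bar q_0*f)(q_0)=1-|q_0|^{2}\in\rr$, one obtains the claim. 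Granting this, if moreover $\partial_c f(q_0)=1$ then the left-hand side of the third Schwarz-Pick inequality, evaluated at $q_0$, equals $\tfrac{1}{1-|q_0|^{2}}$, so equality holds there and $f$ is a regular M\"obius transformation of $\B$; it then remains to verify that the only regular M\"obius transformation fixing $q_0$ with $\partial_c f(q_0)=1$ is the identity, which one reads off the explicit parametrization of these transformations and the formula for their Cullen derivative (section~\ref{regularmoebius}). For $(5)\Rightarrow(1)$: since $R_{q_0}f$ and $\phi(q):=(1-\bar q_0*q)^{-*}*(1-\bar q_0*f(q))$ take the same value at the point $q^{*}$ given by $(5)$, so do $R_{q_0}f*(1-\bar q_0*f)^{-*}$ and $\phi*(1-\bar q_0*f)^{-*}=(1-\bar q_0*q)^{-*}$ there, i.e.\ equality holds in the second Schwarz-Pick inequality at $q^{*}$ and $f$ is again a regular M\"obius transformation. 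One also checks elementarily that $R_{q_0}f\equiv\phi$ holds precisely when $f=\I$: left $*$-multiplying by $q-q_0$ and then by $1-\bar q_0*q$, which $*$-commute, turns this identity into $(1-\bar q_0*q)*(f(q)-q_0)=(q-q_0)*(1-\bar q_0*f(q))$, and expanding it coefficient by coefficient forces $f(q)=q$. Combining this characterization with the M\"obius structure just obtained --- and noting that when $q^{*}=q_0$ condition $(5)$ is simply $\partial_c f(q_0)=R_{q_0}f(q_0)=\phi(q_0)=1$, i.e.\ $(3)$ --- yields $f=\I$.

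The main obstacle is the bookkeeping with the $*$-product: evaluating the $*$-inverses of $1-\overline{f(q_0)}*f$ and of $1-\bar q_0*q$, and the $*$-product $\phi$, at the points $q_0$ and $q^{*}$ under the sole hypothesis $f(q_0)=q_0$ requires carefully tracking which slice each intermediate value lands on and exploiting the slice-preserving character of the symmetrizations; and the final step of $(3)\Rightarrow(1)$ --- pinning down to the identity a regular M\"obius transformation that fixes $q_0$ and has trivial Cullen derivative there --- rests on the explicit description of regular M\"obius transformations of $\B$ recalled in section~\ref{regularmoebius}.
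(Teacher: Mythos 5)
Your reductions $(1)\Rightarrow(2)\Rightarrow(3)$, $(2)\Rightarrow(4)$, $(1)\Rightarrow(5)$ and $(4)\Rightarrow(1)$ are correct and essentially the paper's (the paper likewise observes that $\partial_s f(q_0)=1$ forces $\bar q_0$ to be a second fixed point and invokes theorem \ref{fixedpoints}). Your computation that $(1-\bar q_0 * f)^{-*}$ takes the real value $(1-|q_0|^2)^{-1}$ at $q_0$ when $f(q_0)=q_0$ is also correct, and using the equality cases of \eqref{culderiv} and \eqref{Req} to conclude that $f$ is a regular M\"obius transformation is sound; this is exactly the intermediate conclusion the paper reaches through its technical lemma.

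The gap is in the last mile: showing that a regular M\"obius transformation satisfying $(3)$ or $(5)$ must be the identity. This is the substantive content of the hard direction, and your sketch does not close it. For $(3)\Rightarrow(1)$ you assert that ``the only regular M\"obius transformation fixing $q_0$ with $\partial_c f(q_0)=1$ is the identity'' can be read off an explicit parametrization; but that parametrization (with $u$ and $a$ expressed in terms of $v\in\partial\B$) appears only in section \ref{sectionapplications}, and extracting $\partial_c f(q_0)$ from it and checking that it equals $1$ only for $v=1$ is a nontrivial computation you never perform. For $(5)\Rightarrow(1)$ you prove that the \emph{identical} equality $R_{q_0}f\equiv Q$, with $Q(q)=(1-\bar q_0*q)^{-*}*(1-\bar q_0*f(q))$, forces $f$ to be the identity; but condition $(5)$ only asserts equality at a single point $q^*$, and you have not excluded a M\"obius $f$ different from the identity that coincides with $Q$ at one $q^*\neq q_0$. (Note also that $*$-multiplying a \emph{pointwise} equality by $q-q_0$ is not a pointwise operation --- the evaluation point of the right factor gets conjugated --- so your manipulation is only legitimate for an identity of functions.) The paper closes both implications at once by computing
$$R_{q_0}f(q)-Q(q)=(q-q_0)^{-*}*(1-q\bar q_0)^{-*}*(1-|q_0|^2)*[f(q)-q],$$
so that by corollary \ref{productzeros} a single zero of this difference yields a factorization $f(q)-q=(q-q_0)*(q-q_1)*g(q)$, while for a M\"obius $f$ the explicit factorization $f(q)-q=(1-q\bar a)^{-*}*(q-\alpha)*(q-\beta)\bar a$ with $|\alpha\beta|=1$ (from the proof of theorem \ref{fixedpoints}) forbids two such roots in $\B$ unless $a=0$. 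You need this zero-counting argument, or an equivalent, to finish both $(3)\Rightarrow(1)$ and $(5)\Rightarrow(1)$.
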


Finally, in section \ref{sectionhigher} we generalize our version of the Schwarz-Pick lemma to the case of vanishing higher order derivatives.

\subsection*{Acknowledgements} 
The authors wish to thank Graziano Gentili for his enthusiasm for this topic, and for the fruitful discussions that originated this work. They are also grateful to the anonymous referee for her/his precious suggestions.


\section{Regular M\"obius transformations of $\B$}\label{regularmoebius}

This section surveys the algebraic structure of slice regular functions, and its application to the construction of regular fractional transformations. From now on, we will omit the term `slice' and refer to these functions as regular, {\it tout court}.
Since we will be interested only in regular functions on Euclidean balls $B(0,R)$ of radius $R$ centered at $0$,
or on the whole space $\hh = B(0,+\infty)$, we will follow the presentation of \cite{zeros,poli}. However, we point out that many of the results we are about to mention have been generalized to a larger class of domains in \cite{advancesrevised}.

\begin{theorem} 
Fix $R$ with $0<R\leq + \infty$ and let 
$$\mathcal{D}_R = \{f:B(0,R)\to \hh\ |\ f \mathrm{\ regular}\}.$$
Then $\mathcal{D}_R$  coincides with the set of quaternionic power series $f(q) =\sum_{n \in \nn} q^n a_n$ (with $a_n \in \hh$) converging in $B(0,R)$. Moreover, $\mathcal{D}_R$ is an associative real algebra with respect to $+$ and to the \emph{regular multiplication} $*$ defined on $f(q) =\sum_{n \in \nn} q^n a_n$ and $g(q) =\sum_{n \in \nn} q^n b_n$ by the formula
\begin{equation}
f*g(q)= \sum_{n \in \nn} q^n \sum_{k=0}^n a_k b_{n-k}.\end{equation}
\end{theorem}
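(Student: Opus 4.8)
The plan is to prove the two assertions in turn: first that $\mathcal{D}_R$ coincides with the set of quaternionic power series $\sum_{n\in\nn}q^na_n$ converging on $B(0,R)$, and then that $(\mathcal{D}_R,+,*)$ satisfies the axioms of an associative real algebra. The first assertion carries all the content; the second is formal bookkeeping on the coefficients together with one elementary convergence estimate.

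For the power-series description, the easy inclusion is that a series $f(q)=\sum_{n\in\nn}q^na_n$ converging on $B(0,R)$ is regular. Since $|q^na_n|=|q|^n|a_n|$, the root test forces $\limsup_n|a_n|^{1/n}\le 1/R$, so the series converges absolutely and uniformly on compact subsets of $B(0,R)$ and is in particular real-differentiable on each slice; fixing $I\in\s$ and a unit $J\in\s$ with $J\perp I$ and writing $a_n=\alpha_n+\beta_nJ$ with $\alpha_n,\beta_n\in L_I$, one gets $f_I(z)=\big(\sum_n z^n\alpha_n\big)+\big(\sum_n z^n\beta_n\big)J=F(z)+G(z)J$, a sum of two honest $L_I$-valued holomorphic power series on $\Omega_I=B(0,R)\cap L_I$; because $\bar\partial_I$ is $\rr$-linear and commutes with right multiplication by the constant $J$, one has $\bar\partial_If_I=\bar\partial_IF+(\bar\partial_IG)J=0$. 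Conversely, let $f$ be regular on $B(0,R)$. The Splitting Lemma, whose proof is the elementary rewriting of $\bar\partial_If=0$ just used in reverse, gives $f_I=F+GJ$ with $F,G:\Omega_I\to L_I$ holomorphic in the usual sense, so each $f_I$ is holomorphic on the disc $\Omega_I$ of radius $R$ in $L_I\cong\cc$ and hence admits a complex Taylor expansion $f_I(z)=\sum_n z^na_n^I$ converging on all of $\Omega_I$, with Cauchy estimates yielding $\limsup_n|a_n^I|^{1/n}\le 1/R$. It remains to see that $a_n^I$ is independent of $I$: every $f_I$ restricts to the real diameter $(-R,R)$ to the same real-analytic function $f_\rr=f|_{(-R,R)}$, and the complex Taylor coefficients of $f_I$ at $0$ are determined by the iterated $x$-derivatives of $f_\rr$ at $0$, so $a_n^I=\frac1{n!}\frac{d^nf_\rr}{dx^n}(0)=:a_n$ for all $I$. (Equivalently, one may invoke that $\partial_cf$ is again regular and equals $\partial_xf_I$ on $L_I$, whence $a_n=\frac1{n!}\partial_c^{(n)}f(0)$, an intrinsic quantity.) Since the slice-free bound $\limsup_n|a_n|^{1/n}\le 1/R$ holds, $\sum_{n\in\nn}q^na_n$ converges on all of $B(0,R)$, and for any $q=x+Iy\in B(0,R)$ we have $q\in L_I$, hence $f(q)=f_I(q)=\sum_{n\in\nn}q^na_n$.

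For the algebra structure, $(\mathcal{D}_R,+)$ is a real vector space since sums and real scalar multiples of power series converging on $B(0,R)$ again converge there, with coefficients adding and scaling. For $*$ one first checks well-definedness: given $f=\sum_{n\in\nn}q^na_n$, $g=\sum_{n\in\nn}q^nb_n$ in $\mathcal{D}_R$ and any $0<r<R$, one has $\sum_n|a_n|r^n<\infty$ and $\sum_n|b_n|r^n<\infty$, so with $c_n=\sum_{k=0}^na_kb_{n-k}$ the bound $|c_n|\le\sum_{k=0}^n|a_k||b_{n-k}|$ gives $\sum_n|c_n|r^n\le\big(\sum_n|a_n|r^n\big)\big(\sum_n|b_n|r^n\big)<\infty$; as $r<R$ was arbitrary, $\limsup_n|c_n|^{1/n}\le 1/R$ and $f*g=\sum_{n\in\nn}q^nc_n\in\mathcal{D}_R$. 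Associativity holds because the $n$-th coefficient of both $(f*g)*h$ and $f*(g*h)$ is, after regrouping finite sums, $\sum_{i+j+l=n}(a_ib_j)c_l=\sum_{i+j+l=n}a_i(b_jc_l)$, the two being equal term by term by associativity of quaternion multiplication. Distributivity of $*$ over $+$ and $\rr$-bilinearity are immediate from the coefficient formula, the coefficients of $f*g$ being bilinear in the coefficient sequences and real scalars being central in $\hh$; the constant $1$ is a two-sided unit. Hence $(\mathcal{D}_R,+,*)$ is an associative (indeed unital) real algebra. The only genuine obstacle in the whole argument is the slice-independence of the Taylor coefficients in the converse of the first assertion; once the Splitting Lemma and the reduction to the real restriction $f_\rr$ are in place, the rest is routine.
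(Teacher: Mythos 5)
Your proof is correct and follows essentially the same route as the sources the paper cites for this background theorem (the paper itself records it without proof): the Splitting Lemma reduces each slice restriction to a pair of holomorphic $L_I$-valued functions, the Taylor coefficients are identified with the derivatives of the common real restriction at $0$ and are therefore slice-independent, and the algebra axioms are checked on coefficients together with the Cauchy-product estimate for convergence. Nothing needs to be added.
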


We will also write $f(q)*g(q)$ for $f*g(q)$. In this case, the letter $q$ will always denote the variable. 
The ring $\mathcal{D}_R$ admits a classical ring of quotients 
$$\mathcal{L}_R = \{f^{-*}*g \ |\  f,g \in \mathcal{D}_R, f \not \equiv 0\}.$$ 
In order to introduce it, we begin with the following definition.

\begin{definition}\label{conjugate}
Let $f(q) = \sum_{n \in \nn} q^n a_n$ be a regular function on an open ball $B = B(0,R)$. The \textnormal{regular conjugate} of $f$, $f^c : B \to \hh$, is defined as $f^c(q) = \sum_{n \in \nn} q^n \bar a_n$ and the \textnormal{symmetrization} of $f$, as $f^s = f * f^c = f^c*f$. 
\end{definition}

Notice that $f^s(q) = \sum_{n \in \nn} q^n r_n$ with $r_n = \sum_{k = 0}^n a_k \bar a_{n-k} \in \rr$. Moreover, the zero-sets of $f^c$ and $f^s$ have been fully characterized.

\begin{theorem}\label{conjugatezeros}
Let $f$ be a regular function on $B = B(0,R)$. For all $x,y \in \rr$ with $x+y\s \subseteq B$, the regular conjugate $f^c$ has as many zeros as $f$ in $x+y\s$. Moreover, the zero set of the symmetrization $f^s$ is the union of all the $x+y\s$ on which $f$ has a zero.
\end{theorem}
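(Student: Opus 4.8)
The plan is to reduce the statement to two ingredients: the way the regular product behaves under pointwise evaluation, and the rigidity of the slice structure. For the first, recall from \cite{zeros,poli} that for regular $f,g$ on $B$ and $q\in B$ one has $(f*g)(q)=0$ whenever $f(q)=0$, and $(f*g)(q)=f(q)\,g(f(q)^{-1}qf(q))$ otherwise. For the second, fix $x,y\in\rr$ with $S:=x+y\s\subseteq B$, expand $(x+Iy)^n=\alpha_n+I\beta_n$ with $\alpha_n,\beta_n\in\rr$ depending only on $x,y$, and note that any regular $g(q)=\sum_n q^n c_n$ satisfies $g(x+Iy)=u_g+Iw_g$ with $u_g=\sum_n\alpha_n c_n$ and $w_g=\sum_n\beta_n c_n$ \emph{independent of} $I$. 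This has three consequences I will use: (a) on $S$, $g$ either vanishes identically (iff $u_g=w_g=0$) or has at most one zero (solve $u_g+Iw_g=0$ for $I\in\s$); (b) since $\alpha_n,\beta_n$ are real, $u_{f^c}=\overline{u_f}$ and $w_{f^c}=\overline{w_f}$, hence $f\equiv0$ on $S$ if and only if $f^c\equiv0$ on $S$; (c) since $f^s$ has real coefficients, $u_{f^s}$ and $w_{f^s}$ are real, so $f^s(x+Iy)=u_{f^s}+Iw_{f^s}$ vanishes for one $I\in\s$ iff it vanishes for all of them --- i.e., on each $S$ the function $f^s$ is either identically zero or nowhere zero.

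The core of the argument is the zero-free case. Fix $S=x+y\s$ with $y\neq0$ and suppose $f$ has no zero on $S$; I claim the same holds for $f^c$ and for $f^s$. If $f^c(p)=0$ for some $p\in S$, then $f^s(p)=(f^c*f)(p)=0$ by the product formula, so $f^s\equiv0$ on $S$ by (c); but then $0=f^s(q)=(f*f^c)(q)=f(q)\,f^c(f(q)^{-1}qf(q))$ with $f(q)\neq0$ for every $q\in S$, so $f^c$ vanishes at $f(q)^{-1}qf(q)$ for all $q\in S$. Since $f(q)^{-1}qf(q)$ is a conjugate of $q$ it again lies in $S$, and a short computation with $f(x+Iy)=u_f+Iw_f$ shows that, $f$ being zero-free on $S$, the map $q\mapsto f(q)^{-1}qf(q)$ is non-constant on $S$: if it were constant, say equal to $x+yI_0$, then $I(u_f-w_f I_0)=w_f+u_f I_0$ would hold for all $I\in\s$, forcing $u_f=w_f I_0$ and hence either $f\equiv0$ on $S$ or a zero of $f$ on $S$. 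Therefore $f^c$ vanishes at two distinct points of $S$, whence $f^c\equiv0$ on $S$ by (a) and then $f\equiv0$ on $S$ by (b) --- a contradiction. The same chain, started from a hypothetical zero of $f^s$ on $S$, shows $f^s$ is nowhere zero on $S$. By the symmetry of $f^s=f*f^c=f^c*f$ and $(f^c)^c=f$, we conclude that $f$ is zero-free on $S$ if and only if $f^c$ is, if and only if $f^s$ is.

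Both assertions of the theorem follow. On a sphere $S=x+y\s$ with $y\neq0$, facts (a), (b) and the previous paragraph show that the trichotomy ``vanishes identically on $S$ / exactly one zero on $S$ / no zero on $S$'' is preserved by $f\mapsto f^c$, so $f$ and $f^c$ have the same number of zeros on $S$; on a degenerate ``sphere'' $\{x\}$, $x\in\rr$, this is immediate since $f^c(x)=\overline{f(x)}$. For the zero set of $f^s$: if $f$ has a zero $q_0\in x+y\s$ then $f^s(q_0)=(f*f^c)(q_0)=0$, so $f^s$ vanishes on all of $x+y\s$ by (c); if $f$ has no zero on $x+y\s$ then $f^s$ has none either by the previous paragraph; and $f^s(x)=f(x)\overline{f(x)}=|f(x)|^2$ at real points. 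Hence $V(f^s)$ is precisely the union of the spheres $x+y\s$ that meet $V(f)$. The one genuinely delicate point is the non-constancy of the conjugation map on a zero-free sphere: without it one could only place the zeros of $f^c$ on the image of $S$, not on all of $S$. Everything else is bookkeeping around the product formula and the rigidity (c). (If one wants the count with multiplicities, one divides out the relevant real-coefficient quadratic factor and iterates.)
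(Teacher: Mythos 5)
Your argument is correct. Note that the paper itself does not prove this theorem: it is quoted as background from the cited work on zeros of regular functions, and your reconstruction follows essentially the same route as that source --- the rigidity $g(x+Iy)=u_g+Iw_g$ with $u_g,w_g$ independent of $I$, the pointwise evaluation formula for the $*$-product, and the resulting trichotomy (identically zero / one zero / zero-free) on each sphere $x+y\s$ --- with the one genuinely delicate step, the non-constancy of $q\mapsto f(q)^{-1}qf(q)$ on a sphere where $f$ is zero-free, carried out correctly.
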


We are now ready for the definition of regular quotient. We denote by 
$$\z_h = \{q \in B \ |\  h(q) = 0\}$$ 
the zero-set of a function $h$.

\begin{definition}\label{quotient}
Let $f,g : B = B(0,R) \to \hh$ be regular functions. The \emph{left regular quotient} of $f$ and $g$ is the function $f^{-*} * g$ defined in $B \setminus \z_{f^s}$ by 
\begin{equation}
f^{-*} * g (q) = {f^s(q)}^{-1} f^c * g(q).
\end{equation} 
Moreover, the \emph{regular reciprocal} of $f$ is the function $f^{-*} = f^{-*} * 1$.
\end{definition}

Left regular quotients proved to be regular in their domains of definition. 
%
If  we set $(f^{-*}*g)*(h^{-*}*k) = (f^{s}h^{s})^{-1} f^c*g*h^c*k$ then $(\mathcal{L}_R,+,*)$ is a division algebra over $\mathbb{R}$ and it is the classical ring of quotients of $(\mathcal{D}_R,+,*)$ (for this notion, see \cite{rowen}). In particular, $\mathcal{L}_R$ coincides with the set of \emph{right regular quotients} 
$$g*h^{-*} (q) = {h^s(q)}^{-1} g * h^c(q).$$ 
The definition of regular conjugation and symmetrization is extended to $\mathcal{L}_R$ setting $(f^{-*}*g)^c = g^c*(f^c)^{-*}$ and $(f^{-*}*g)^s(q) = {f^s(q)}^{-1}g^s(q)$.
Furthermore, the following relation between the left regular quotient $f^{-*} * g(q)$ and the quotient $f(q)^{-1} g (q)$ holds. 

\begin{theorem}\label{quotients}
Let $f,g$ be regular functions on $B=B(0,R)$. Then
\begin{equation}
f*g(q) =
\left\{
\begin{array}{ll}
0 & \mathrm{if\ } f(q)=0\\
 f(q)\, g(f(q)^{-1}qf(q)) &  \mathrm{otherwise}
\end{array}
\right.
\end{equation}
and setting $T_f(q) = f^c(q)^{-1} q f^c(q)$ for all $q \in B \setminus \z_{f^s}$,
\begin{equation}
f^{-*}*g(q) = f(T_f(q))^{-1} g(T_f(q)),
\end{equation}
for all $q \in B \setminus \z_{f^s}$. For all $x,y \in \rr$ with $x+y\s\subset B \setminus \z_{f^s}\subset B \setminus \z_{f^c}$, the function $T_f$ maps $x+y\s$ to itself (in particular $T_f(x) = x$ for all $x \in \rr$). Furthermore, $T_f$ is a diffeomorphism from $B \setminus \z_{f^s}$ onto itself, with inverse $T_{f^c}$.
\end{theorem}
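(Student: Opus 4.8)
The plan is to reduce everything to a single rearrangement of the power series that defines $*$. Writing $f(q) = \sum_{n\in\nn} q^n a_n$ and $g(q) = \sum_{n\in\nn} q^n b_n$, both absolutely convergent on $B(0,R)$, I may reindex freely:
\begin{align*}
f*g(q) &= \sum_{n\in\nn} q^n \sum_{k=0}^n a_k b_{n-k} = \sum_{k,m\in\nn} q^{k+m} a_k b_m \\
&= \sum_{m\in\nn} q^m \Bigl(\sum_{k\in\nn} q^k a_k\Bigr) b_m = \sum_{m\in\nn} q^m\, f(q)\, b_m .
\end{align*}
From this master identity the first assertion is immediate: if $f(q) = 0$ then every summand on the right vanishes; and if $f(q) \neq 0$, then conjugation by $f(q)$ is a ring automorphism, so $f(q)^{-1} q^m f(q) = \bigl(f(q)^{-1} q f(q)\bigr)^m$, whence
\[
f*g(q) = \sum_{m\in\nn} f(q)\,\bigl(f(q)^{-1} q^m f(q)\bigr)\, b_m = f(q)\, g\bigl(f(q)^{-1} q f(q)\bigr),
\]
which is meaningful because $|f(q)^{-1} q f(q)| = |q| < R$.

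For the quotient formula I would unwind Definition \ref{quotient}: on $B \setminus \z_{f^s}$ one has $f^{-*}*g(q) = f^s(q)^{-1}\, f^c*g(q)$. By Theorem \ref{conjugatezeros} the zero sets of $f$ and of $f^c$ are contained in $\z_{f^s}$, so $f^c(q) \neq 0$ here, and the first assertion applied to $f^c*g$ and to $f^c*f = f^s$ gives $f^c*g(q) = f^c(q)\, g(T_f(q))$ and $f^s(q) = f^c(q)\, f(T_f(q))$. Inverting the latter product — carefully, since this reverses the order of the factors — and substituting,
\[
f^{-*}*g(q) = \bigl(f^c(q)\, f(T_f(q))\bigr)^{-1} f^c(q)\, g(T_f(q)) = f(T_f(q))^{-1} g(T_f(q)).
\]

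The properties of $T_f(q) = f^c(q)^{-1} q\, f^c(q)$ are then elementary. Conjugation $q \mapsto c^{-1} q c$ by any nonzero $c \in \hh$ preserves both the real part and the modulus, hence preserves $|Im(q)|$ as well; therefore it maps each sphere $Re(q) + |Im(q)|\,\s$ into itself and fixes every real point, which yields $T_f(x+y\s) \subseteq x+y\s$ and $T_f(x) = x$. Since $\z_{f^s}$ is a union of such spheres, $T_f$ maps $B \setminus \z_{f^s}$ into itself. To identify the inverse, I use $(f^c)^c = f$, so that $T_{f^c}(p) = f(p)^{-1} p\, f(p)$, together with the identity $f(T_f(q)) = f^c(q)^{-1} f^s(q)$ read off above:
\begin{align*}
T_{f^c}(T_f(q)) &= f(T_f(q))^{-1}\, T_f(q)\, f(T_f(q)) \\
&= f^s(q)^{-1}\, f^c(q)\, \bigl(f^c(q)^{-1} q\, f^c(q)\bigr)\, f^c(q)^{-1}\, f^s(q) = f^s(q)^{-1}\, q\, f^s(q) .
\end{align*}
Because $f^s$ has real coefficients, $f^s(q)$ lies in the slice $L_I$ through $q$ and hence commutes with $q$, so the right-hand side is $q$. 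As $(f^c)^s = f^s$, the map $T_{f^c}$ has the same domain $B \setminus \z_{f^s}$, and the symmetric computation gives $T_f \circ T_{f^c} = \I$; both maps are real-analytic, being built from a nonvanishing power series and quaternionic inversion, so $T_f$ is a diffeomorphism of $B \setminus \z_{f^s}$ onto itself with inverse $T_{f^c}$. Restricting to one sphere, $T_f(x+y\s) \subseteq x+y\s$ and $T_{f^c}(x+y\s) \subseteq x+y\s$ together force $T_f$ to map $x+y\s$ bijectively onto itself.

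I expect no serious obstacle here — the argument is essentially bookkeeping. The two points that need attention are keeping track of the domains of definition, every inversion above being licit precisely because $\z_f, \z_{f^c} \subseteq \z_{f^s}$ by Theorem \ref{conjugatezeros}, and respecting noncommutativity, the only place where a naive computation could go wrong being the reversal of factors when inverting a product. The one mildly conceptual step is the observation that $f^s(q)$ commutes with $q$; everything else is forced.
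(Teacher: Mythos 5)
Your argument is correct, and it is essentially the standard one: the paper itself states theorem \ref{quotients} without proof, recalling it from the cited literature on zeros and quotients of regular functions, where the first formula is obtained by exactly your rearrangement $f*g(q)=\sum_{m\in\nn} q^m f(q)\, b_m$ and the second by unwinding definition \ref{quotient} via $f^s=f^c*f$. The two delicate points --- that $\z_f,\z_{f^c}\subseteq\z_{f^s}$ (from theorem \ref{conjugatezeros}) licenses every inversion, and that $f^s(q)$ commutes with $q$ because $f^s$ has real coefficients --- are both handled correctly in your write-up.
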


We point out that, so far, no similar result relating $g*h^{-*}(q)$ to $g(q) h(q)^{-1}$ is known.

This machinery allowed the introduction in \cite{moebius} of regular analogs of linear fractional transformations. To each 
$A=\begin{bmatrix}
a & c \\
b & d
\end{bmatrix} \in GL(2, \hh)$ we can associate the \emph{regular fractional transformation} 
$$\f_A (q) = (qc+d)^{-*}*(qa+b).$$
By the formula $(qc+d)^{-*}*(qa+b)$ we denote the aforementioned left regular quotient $f^{-*}*g$ of $f(q) = qc+d$ and $g(q) = qa +b$. We denote the $2 \times 2$ identity matrix as $\I$. The set of regular fractional transformations
$$\rft = \{\f_A \ |\  A \in GL(2,\rr)\}$$
is not a group, but it is the orbit of the identity function $id = \f_\I$ with respect to the two actions on $\mathcal{L}_\infty$ described in the next theorem. 

\begin{theorem}
Choose $R>0$ and consider the ring of quotients of regular quaternionic functions in $B(0,R)$, denoted by $\mathcal{L}_R$.
Setting 
\begin{equation}
f.A = (f c + d)^{-*}*(f a+ b)
\end{equation} 
for all $f \in \mathcal{L}_R$ and for all $A =
\begin{bmatrix}
a & c \\
b & d
\end{bmatrix} \in GL(2,\hh)$,
defines a right action of $GL(2,\hh)$ on $\mathcal{L}_R$. A left action of $GL(2,\hh)$ on $\mathcal{L}_R$ is defined setting
\begin{equation}
A^{t}.f=(a*f+b)*(c*f+d)^{-*}.
\end{equation} 
The stabilizer of any element of $\mathcal{L}_R$ with respect to either action includes the normal subgroup $N=\left \{t \cdot \I\ |\  t \in \rr \setminus \{0\} \right \}\, \unlhd\, GL(2,\hh)$. Both actions are faithful, but not free, when reduced to $PSL(2,\hh) = GL(2,\hh)/N$.
\end{theorem}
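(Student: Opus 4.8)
The plan is to establish each of the four assertions by a direct computation inside the division algebra $(\mathcal{L}_R,+,*)$, using only that $*$ is associative with two-sided inverses, that $(\varphi*\psi)^{-*}=\psi^{-*}*\varphi^{-*}$ for nonzero $\varphi,\psi$, that a quaternionic constant absorbs on the right as $\varphi*\lambda*\mu=\varphi*(\lambda\mu)$, and that a constant $\lambda$ is central in $\mathcal{L}_R$ precisely when $\lambda\in\rr$ (so in particular the variable $q=q^1\cdot 1$ is central). Throughout, $fc$, $fa$, $af$, \dots\ abbreviate the regular products $f*c$, $f*a$, $a*f$, the letters $a,b,c,d$ being read as constant functions; the element $f.A$ fails to be defined only at the single constant $-d*c^{-1}$ (when $c\neq0$), and one upgrades the formulas to a bona fide action by adjoining a point at infinity, exactly as for classical linear fractional transformations. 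I suppress this standard point hereafter.

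For the right action, $f.\I=1^{-*}*(f*1)=f$. For the cocycle law write $A=\begin{bmatrix}a_1&c_1\\b_1&d_1\end{bmatrix}$, $B=\begin{bmatrix}a_2&c_2\\b_2&d_2\end{bmatrix}$ and $g=f.A=(f*c_1+d_1)^{-*}*(f*a_1+b_1)$. Factoring $(f*c_1+d_1)^{-*}$ out to the left and absorbing the constants gives
\begin{align*}
g*a_2+b_2 &= (f*c_1+d_1)^{-*}*\bigl[f*(a_1a_2+c_1b_2)+(b_1a_2+d_1b_2)\bigr],\\
g*c_2+d_2 &= (f*c_1+d_1)^{-*}*\bigl[f*(a_1c_2+c_1d_2)+(b_1c_2+d_1d_2)\bigr],
\end{align*}
and the bracketed quantities are exactly the entries of $AB$ paired against $f$. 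Applying $(\varphi*\psi)^{-*}=\psi^{-*}*\varphi^{-*}$ to the second line and cancelling $(f*c_1+d_1)*(f*c_1+d_1)^{-*}=1$ yields $g.B=f.(AB)$. The left action is dealt with identically, now factoring $(c*f+d)^{-*}$ out to the right; the order reversal in the $*$-reciprocal is exactly what turns the transposed matrices back into honest left composition $(MN).f=M.(N.f)$, while $\I.f=f*1^{-*}=f$.

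For the stabilizers, $t\I$ with $t\in\rr\setminus\{0\}$ acts by $f\mapsto t^{-*}*(f*t)=t^{-1}*f*t=f$ since real constants are central; hence $N$ lies in the stabilizer of every element, for both actions. As $N$ consists of scalar matrices it is central in $GL(2,\hh)$, hence normal, and both actions descend to $PSL(2,\hh)=GL(2,\hh)/N$. For faithfulness of the descended actions, suppose $A$ fixes every element of $\mathcal{L}_R$; testing on $f=q$ gives $\f_A=q$, so after left multiplication by $qc+d$ one gets $qa+b=(qc+d)*q=q^2c+qd$ in $\mathcal{D}_R$, forcing $b=0$, $a=d$, $c=0$ and hence $A=a\I$ with $a\in\hh\setminus\{0\}$; testing further on the constants $i$ and $j$ gives $a^{-1}ia=i$ and $a^{-1}ja=j$, so $a\in\rr\setminus\{0\}$ and $A\in N$ (and likewise for the left action). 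Finally the descended actions are not free: for any $a\in\hh\setminus\{0\}$ one computes $\f_{a\I}=a^{-*}*(q*a)=a^{-1}*(q*a)=q$, so the class of $i\I$ is a nontrivial element of $PSL(2,\hh)$ fixing the identity function.

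The computations themselves are routine and modelled on the classical linear-fractional case; what needs care is the non-commutativity of $\mathcal{L}_R$ — one must never commute an outer quaternionic constant past $f$ or past another such constant, and the order reversal in the $*$-reciprocal is indispensable, being precisely the reason one obtains a \emph{right} action on one side and a \emph{left} action on the transposed matrices on the other — together with the point, in the faithfulness step, that the test $f=q$ only pins $A$ down to a quaternionic scalar matrix, so a second test against noncentral constants is genuinely required to reach $N$; I would also flag the domain issue once, as above, and then proceed formally.
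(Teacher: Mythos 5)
The paper itself offers no proof of this theorem --- it is quoted from \cite{volumeindam,moebius} with the remark ``for more details, see...'' --- so there is no internal argument to compare against; your direct verification is correct and is precisely the standard computation those references carry out: the cocycle identity via factoring $(f*c_1+d_1)^{-*}$ out on the left and matching the bracketed expressions with the entries of $AB$, the reversal $(\varphi*\psi)^{-*}=\psi^{-*}*\varphi^{-*}$ converting the transposed matrices into a genuine left action, centrality of real constants for the stabilizer claim, and the two-stage faithfulness test (first $f=q$ to force $A$ to be a quaternionic scalar matrix, then the constants $i,j$ to force the scalar to be real), with non-freeness witnessed by $\f_{a\I}=id$ for every $a\in\hh\setminus\{0\}$. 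The one point genuinely needing care is the one you flag: $f.A$ is undefined at the single constant $f=-dc^{-1}$ when $c\neq 0$, so the formulas define an action on $\mathcal{L}_R$ only after the usual extension by a point at infinity (or with the tacit convention adopted in the cited works); with that caveat your argument is complete.
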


For more details, see \cite{volumeindam,moebius}. The two actions are related as follows.

\begin{proposition}\label{hermitian}\label{leftright}
For all $A \in GL(2,\hh)$ and for all $f\in \mathcal{L}_R$
\begin{enumerate}
\item $\left(f.A\right)^c=\bar{A}^t.f^c$;
\item if $A$ is Hermitian then $f.A = A^{t}.f$;
\item if $A$ is Hermitian then  $(f.A)^c= f^c.\bar{A}$.
\end{enumerate}
\end{proposition}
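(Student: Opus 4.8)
\emph{Proof strategy.} All three identities take place in the division algebra $\mathcal{L}_R$, so no domain subtleties arise beyond those already built into the two actions. I would use repeatedly that regular conjugation on $\mathcal{L}_R$ is $\rr$-linear, is an involution, reverses $*$-products, sends regular reciprocals to regular reciprocals, and restricts to quaternionic conjugation on constant functions; concretely, for constants $a,b \in \hh$ one has $(f*a+b)^c = \bar a * f^c + \bar b$, and $(f^{-*}*g)^c = g^c*(f^c)^{-*}$ as recalled in Section~\ref{regularmoebius}.

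For part~$(1)$, I would start from $f.A = (fc+d)^{-*}*(fa+b)$ and apply the conjugation rule for left regular quotients, getting $(f.A)^c = (fa+b)^c * ((fc+d)^c)^{-*}$. Computing the conjugates of numerator and denominator by the rules above gives $(f.A)^c = (\bar a * f^c + \bar b)*(\bar c * f^c + \bar d)^{-*}$. Since $\bar A = \begin{bmatrix} \bar a & \bar c \\ \bar b & \bar d \end{bmatrix}$, the definition of the left action yields exactly $\bar A^t . f^c = (\bar a * f^c + \bar b)*(\bar c * f^c + \bar d)^{-*}$, and the two sides coincide.

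For part~$(2)$, a Hermitian $A \in GL(2,\hh)$ has the form $A = \begin{bmatrix} a & \bar b \\ b & d \end{bmatrix}$ with $a,d \in \rr$ and $b \in \hh$, so $f.A = (f*\bar b + d)^{-*}*(fa+b)$ while $A^t.f = (af+b)*(\bar b*f+d)^{-*}$. Working in the division algebra $\mathcal{L}_R$, cross-multiplication shows that $f.A = A^t.f$ is equivalent to the identity $(fa+b)*(\bar b*f+d) = (f*\bar b+d)*(af+b)$ in $\mathcal{L}_R$. I would expand both sides by distributivity of $*$: the scalars $a,d$ are real, hence central in $\mathcal{L}_R$ and can be pulled out of the $*$-products, and $b*\bar b = \bar b*b = |b|^2 \in \rr$; after these simplifications the two sides agree term by term. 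This short but genuinely computational step is the only real obstacle in the argument, and it is exactly here that the Hermitian hypothesis is used: it is what forces the diagonal entries of $A$ to be central and the off-diagonal entries to be mutually conjugate.

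Part~$(3)$ then follows formally from the first two. First, $\bar A$ is Hermitian whenever $A$ is (a one-line check starting from $\bar A^t = A$, which upon transposing gives $\bar A = A^t$, whence $\overline{\bar A}^{\,t} = A^t = \bar A$). Now by part~$(1)$, $(f.A)^c = \bar A^t . f^c$, and by part~$(2)$ applied to the Hermitian matrix $\bar A$ (with $f^c$ in place of $f$), $\bar A^t . f^c = f^c . \bar A$. Combining the two gives $(f.A)^c = f^c . \bar A$, as claimed.
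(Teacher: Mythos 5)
Your proof is correct. Note that the paper does not actually prove this proposition --- it explicitly defers the argument to \cite{volumeindam} --- so there is no in-text proof to compare yours against; what you have written is the standard verification and it is complete. Part~(1) is precisely the conjugation rule $(f^{-*}*g)^c = g^c*(f^c)^{-*}$ recalled in Section~\ref{regularmoebius}, combined with the observation that conjugation turns right multiplication by a constant $a$ into left $*$-multiplication by $\bar a$, and it matches the definition of the left action applied to $\bar A$. In part~(2) your cross-multiplied identity does hold: both sides expand to $a\,(f*\bar b*f) + ad\,f + |b|^2 f + bd$ once one uses that $a,d\in\rr$ are central in $\mathcal{L}_R$ and that $b*\bar b = \bar b*b = |b|^2\in\rr$, which is indeed exactly where Hermitian-ness enters. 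The deduction of (3) from (1) and (2), via the one-line check that $\bar A$ is Hermitian whenever $A$ is, is also sound.
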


As a consequence, the set $\rft$ of regular fractional transformations is preserved by regular conjugation. For the proofs of these properties, we refer the reader to \cite{volumeindam}.


In the present paper, we are specifically interested in those regular fractional transformations that map the open quaternionic unit ball $\B$ onto itself, called \emph{regular M\"obius transformations  of $\B$}, whose class we denote as
$$\mathfrak{M}= \{f \in \rft \ |\  f(\B) = \B\}.$$
More generally, we will concern ourselves with the class
$$\reg = \{f:\B \to \B\ |\ f \mathrm{\ is\ regular}\}$$
of regular self-maps of $\B$. It was proven in \cite{moebius} that a function $f \in \reg$ is a regular M\"obius transformation if, and only if, it is bijective.  Furthermore, the next property was proven in \cite{volumeindam,moebius}.

\begin{theorem}
A function $f : \B \to \hh$ is a regular M\"obius transformation of $\B$ if and only if there exist (unique) $u \in \partial \B, a \in \B$ such that
\begin{equation}
f(q) = (q-q_0)*(1-\bar q_0* q)^{-*}u= (1-q \bar q_0)^{-*}*(q-q_0)u 
\end{equation}
In other words, $\mathfrak{M}$ is the orbit of the identity function under the left and right actions of $Sp(1,1)$.
\end{theorem}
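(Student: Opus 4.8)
The plan is to prove the ``if'' and ``only if'' parts together with the uniqueness of $u,q_0$, and then to recast the statement in terms of the two $GL(2,\hh)$-actions.

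\emph{Sufficiency.} Fix $q_0\in\B$ and $u\in\partial\B$, and set $h_{q_0,u}(q)=(1-q\bar q_0)^{-*}*(qu-q_0u)$. This is $\f_A$ for $A=\begin{bmatrix} u & -\bar q_0\\ -q_0u & 1\end{bmatrix}\in GL(2,\hh)$, so $h_{q_0,u}\in\rft$; moreover the symmetrization of $1-q\bar q_0$ is the real polynomial $1-2\,\mathrm{Re}(q_0)\,q+|q_0|^2q^2$, whose zero set lies at distance $|q_0|^{-1}>1$ from the origin, so $h_{q_0,u}$ is regular on all of $\B$. By Theorem \ref{quotients}, $h_{q_0,u}(q)$ equals the value at $T(q)$ of the \emph{classical} transformation $p\mapsto(1-p\bar q_0)^{-1}(p-q_0)u$, where $T=T_{1-q\bar q_0}$ is a diffeomorphism of $\B$ onto itself carrying each sphere $x+y\s\subseteq\B$ to itself. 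Since that classical transformation is a M\"obius transformation of $\B$, it is a bijection of $\B$; composing with the bijection $T$ shows $h_{q_0,u}$ is a regular bijection of $\B$, that is, $h_{q_0,u}\in\mathfrak{M}$. Finally, the matrix $\begin{bmatrix} 1 & -\bar q_0\\ -q_0 & 1\end{bmatrix}$ is Hermitian, so Proposition \ref{hermitian}$(2)$ gives $(1-q\bar q_0)^{-*}*(q-q_0)=(q-q_0)*(1-\bar q_0q)^{-*}$; right-multiplying by the constant $u$ yields the two displayed expressions for $f$.

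\emph{Necessity.} Let $f\in\mathfrak{M}$. By \cite{moebius}, $f$ is a bijection of $\B$ and $\mathfrak{M}$ is a group under composition; put $q_0=f^{-1}(0)\in\B$ and let $h=h_{q_0,1}\in\mathfrak{M}$, so $h(q_0)=0$. Then $\psi:=f\circ h^{-1}$ belongs to $\mathfrak{M}\subseteq\reg$ and fixes $0$. The regular Schwarz lemma recalled in the Introduction, applied to $\psi$ and to $\psi^{-1}\in\mathfrak{M}$, forces $|\psi(p)|=|p|$ on $\B$, hence $\psi(p)=pv$ for some $v\in\partial\B$. Therefore $f(q)=\psi(h(q))=h(q)v=(q-q_0)*(1-\bar q_0q)^{-*}v$, since right multiplication by a constant coincides with $*$-multiplication by it. For uniqueness: two such transformations $(q-q_0)*(1-\bar q_0q)^{-*}u$ and $(q-q_1)*(1-\bar q_1q)^{-*}w$ are bijections of $\B$, hence each has a single zero; comparing those zeros gives $q_0=q_1$, and then evaluating the Cullen derivative at $q_0$ (which, as recalled above, equals $R_{q_0}h_{q_0,u}(q_0)$, a fixed positive real multiple of $u$) gives $u=w$.

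\emph{Orbit description.} One checks that $A_0=(1-|q_0|^2)^{-1/2}\begin{bmatrix} u & -\bar q_0\\ -q_0u & 1\end{bmatrix}$ lies in $Sp(1,1)$ and differs from $\begin{bmatrix} u & -\bar q_0\\ -q_0u & 1\end{bmatrix}$ only by a central factor in $N$, so $\f_{A_0}=h_{q_0,u}$; thus every transformation of the displayed form belongs to the $Sp(1,1)$-orbit of $id$. Conversely, writing a general $A\in Sp(1,1)$ by the standard (polar/Cartan) decomposition as a pair of rotations times a ``real'' hyperbolic block, and using Proposition \ref{hermitian} to pass between the left and right actions, shows $id.A\in\mathfrak{M}$ for every such $A$; together with the ``only if'' part this identifies $\mathfrak{M}$ with the $Sp(1,1)$-orbit of $id$ under either action.

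\emph{Main obstacle.} The crux is that composition does not preserve slice regularity, so the centering step in the necessity part cannot be carried out naively: it rests on the nontrivial fact from \cite{moebius} that $\mathfrak{M}$ is closed under composition, while the reduction to classical transformations in the sufficiency part depends on Theorem \ref{quotients}. Checking that the symmetrizations occurring in the argument have no zeros in $\B$ — so that every regular quotient in sight is genuinely regular on $\B$ — is the remaining, routine but essential, verification.
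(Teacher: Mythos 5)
Your sufficiency and uniqueness arguments are essentially sound: using Theorem \ref{quotients} to write $h_{q_0,u}$ as a classical M\"obius transformation of $\B$ precomposed with the sphere-preserving self-bijection $T_{1-q\bar q_0}$ of $\B$ correctly gives $h_{q_0,u}(\B)=\B$, and Proposition \ref{hermitian} yields the two displayed forms. (For the record, the paper does not prove this theorem at all; it cites it from \cite{volumeindam,moebius}.)

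The necessity direction, however, has a genuine gap. You set $\psi=f\circ h^{-1}$ and apply the regular Schwarz lemma to $\psi$ and $\psi^{-1}$, justifying this by the claim that $\mathfrak{M}$ is closed under composition. That claim is false, and it is not a fact from \cite{moebius}: composition does not preserve slice regularity, and this is precisely why the paper stresses that $\rft$ ``is not a group'' and endows $\mathfrak{M}$ with a group structure only via the $Sp(1,1)$-\emph{actions}, which are not compositions. Concretely, $f(q)=qi$ and $g(q)=(1-q\bar a)^{-*}*(q-a)=\sum_n q^nc_n$ with $a\in\B\setminus\{0\}$ are both regular M\"obius transformations, yet $g\circ f(q)=\sum_n(qi)^nc_n$ has degree-two part $qiqi\,c_2$ with $c_2=\bar a(1-|a|^2)\neq0$, and $q\mapsto qiqi$ restricted to $L_j$ equals $(xi-yk)^2=-(x^2+y^2)$, which is not of the form $(x+jy)^2c$; so $g\circ f\notin\reg$. (For the same reason $h^{-1}$ and $\psi^{-1}$ are not known to be regular, so the Schwarz lemma cannot be applied to them either.) The standard repair avoids composition entirely: a regular M\"obius transformation is some $\f_A$ with $\f_A(\B)=\B$; by Theorem \ref{quotients}, $\f_A=F_A\circ T$ with $F_A(p)=(pc+d)^{-1}(pa+b)$ classical and $T$ a sphere-preserving bijection of $\B$, so $\f_A(\B)=F_A(\B)$ and the classical characterization \eqref{InvM} forces $A\in Sp(1,1)$ modulo the normal subgroup $N$; one then decomposes such an $A$ as $C(q_0)\cdot\mathrm{diag}(u,v)$ up to $N$ and computes $\f_A$ directly to obtain the normal form. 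Alternatively, the role you want composition to play is exactly what the right action $f\mapsto f.C$, $C\in Sp(1,1)$, performs while staying inside $\mathfrak{M}$ (Proposition \ref{transform}). Your closing ``orbit description'' is fine in its direct computations but inherits this same dependence on the flawed necessity step.
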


We point out that, by definition, $\bar q_0* q = q \bar q_0$.
Finally, let us recall a result that will prove useful in the sequel (see proposition 3.3 of \cite{volumeindam}).

\begin{proposition}\label{transform}
If $f \in \reg$ then for all $a \in \B$
\begin{equation}
\left(f(q)-a\right)*\left(1-\bar a*f(q)\right)^{-*}=\left(1-f(q) \bar a\right)^{-*}*\left(f(q)-a\right).
\end{equation}
Furthermore, the left and right actions of $Sp(1,1)$ and the regular conjugation preserve both $\reg$ and $\mathfrak{M}$.
\end{proposition}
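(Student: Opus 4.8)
The plan is to read the displayed identity off the symmetry between the two $GL(2,\hh)$-actions. Given $a\in\B$, I would take the matrix $A=\begin{bmatrix}1 & -\bar a\\ -a & 1\end{bmatrix}$, which is Hermitian and invertible — indeed $\tfrac{1}{\sqrt{1-|a|^2}}\,A\in Sp(1,1)$, since $\bar A^{t}HA=(1-|a|^2)H$. Unwinding the two actions on $f$ gives $f.A=(1-f\bar a)^{-*}*(f-a)$ for the right action and $A^{t}.f=(f-a)*(1-\bar a*f)^{-*}$ for the left action. Since $A$ is Hermitian, part~(2) of Proposition~\ref{leftright} yields $f.A=A^{t}.f$, which is exactly the asserted equality (both sides being defined on all of $\B$ by the preservation statement below).

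For the preservation claims I would begin with $\mathfrak{M}$. As $\mathfrak{M}$ is the orbit of $id$ under the left (resp.\ right) action of the \emph{group} $Sp(1,1)$ and these actions are associative, the orbit is stable under a further action of $Sp(1,1)$, so both actions preserve $\mathfrak{M}$. Since $id^{c}=id$, Proposition~\ref{leftright}(1) gives $(id.A)^{c}=\bar A^{t}.id^{c}=\bar A^{t}.id$, and a one-line manipulation of $\bar A^{t}HA=H$ shows $\bar A^{t}\in Sp(1,1)$, so $(id.A)^{c}\in\mathfrak{M}$ and conjugation preserves $\mathfrak{M}$. Next I would show the right action preserves $\reg$: for $f\in\reg$ and $A=\begin{bmatrix}\alpha & \gamma\\ \beta & \delta\end{bmatrix}\in Sp(1,1)$ the denominator $f\gamma+\delta$ in $f.A=(f\gamma+\delta)^{-*}*(f\alpha+\beta)$ has no zero in $\B$ — a zero would force $f$ to take the value $-\delta\gamma^{-1}$, whose modulus is $\geq1$ because $id.A\in\mathfrak{M}$ is pole-free (the case $\gamma=0$ is trivial) — so $f.A$ is regular throughout $\B$; and by Theorem~\ref{quotients}, $f.A(q)=(f(T)\gamma+\delta)^{-1}(f(T)\alpha+\beta)$ with $T=T_{f\gamma+\delta}(q)\in\B$, which realizes $f.A(q)$ as the image of the point $f(T)\in\B$ under the classical $Sp(1,1)$-Möbius transformation of $\B$ associated to $A$, hence a point of $\B$. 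The left action then costs nothing: Proposition~\ref{leftright}(1) together with $(f^{c})^{c}=f$ expresses the left action by $A$ as $f\mapsto(f^{c}.\bar A^{t})^{c}$, and since $\bar A^{t}\in Sp(1,1)$ this reduces it to the right action composed with conjugation.

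The step I expect to be the main obstacle is that \emph{regular conjugation preserves $\reg$}: here the fractional-transformation machinery is useless, for $f^{c}(q)=\sum_{n}q^{n}\bar a_{n}$ is defined coefficientwise and there is no pointwise comparison of $|f^{c}(q)|$ with $|f(q)|$. I would attack it slicewise. On $L_{I}$, writing $f_{I}=F_{1}+F_{2}J$ with $F_{1},F_{2}$ holomorphic and $J\in\s$, $J\perp I$, one gets $f^{c}_{I}=F_{1}^{*}-F_{2}J$ with $F_{1}^{*}(z)=\overline{F_{1}(\bar z)}$, so $|f^{c}_{I}(z)|^{2}=|F_{1}(\bar z)|^{2}+|F_{2}(z)|^{2}$ while $|f_{I}(z)|^{2}=|F_{1}(z)|^{2}+|F_{2}(z)|^{2}$. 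The genuine difficulty is that one slice does not see a whole $2$-sphere $x+y\s$: to pass from $\sup_{\B}|f|\leq1$ to $\sup_{\B}|f^{c}|\leq1$ one has to use the values of $f$ on \emph{every} slice through $x+y\s$ — that is, the description of $f(x+y\s)$ and $f^{c}(x+y\s)$ as round $2$-spheres with controllable centre and radius — together with the fact that $f^{s}=f*f^{c}$ has real coefficients and hence constant modulus on each $x+y\s$. Making this comparison precise is, I believe, where the content of the proposition lies; the rest is bookkeeping with Proposition~\ref{leftright} and Theorem~\ref{quotients}.
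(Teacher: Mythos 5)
The paper itself does not prove this proposition---it is quoted from Proposition~3.3 of the reference \cite{volumeindam}---so there is no in-text proof to compare against; I can only judge your argument on its own. Its architecture is sound. The matrix $A=\begin{bmatrix}1 & -\bar a\\ -a & 1\end{bmatrix}$ is Hermitian with $\bar A^tHA=(1-|a|^2)H$, and Proposition~\ref{leftright}(2) does give the displayed identity once both symmetrized denominators are known to be zero-free, which your preservation argument supplies. The orbit argument for $\mathfrak{M}$, the computation $\bar A^t\in Sp(1,1)$, and the proof that the right action preserves $\reg$ are all correct: the $Sp(1,1)$ relations force $|\delta|^2=1+|\gamma|^2$, so $|{-\delta\gamma^{-1}}|>1$ and $f\gamma+\delta$ is zero-free on $\B$ (hence so is its symmetrization, by Theorem~\ref{conjugatezeros}), and Theorem~\ref{quotients} then exhibits $f.A(q)$ as a classical M\"obius image of a point of $\B$. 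One bookkeeping slip: Proposition~\ref{leftright}(1) with $h=f^c$, $B=\bar A$ gives $A^t.f=(f^c.\bar A)^c$, not $(f^c.\bar A^t)^c$; this is harmless since both $\bar A$ and $\bar A^t$ lie in $Sp(1,1)$.

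The genuine gap is the one you flag yourself and leave open: that regular conjugation preserves $\reg$. You are right that a single slice cannot settle it (from $|f^c_I(z)|^2=|F_1(\bar z)|^2+|F_2(z)|^2$ one cannot bound $|f^c_I(z)|$ by values of $|f_I|$ alone), and right that the whole sphere must be used---but the step is short enough that it should have been carried out. On $x+y\s$ one has $f(x+Iy)=b+Ic$ with $b=v_sf$, $c=y\,\partial_sf$ constant on the sphere, and, by the identities $v_sf^c=\overline{v_sf}$, $\partial_sf^c=\overline{\partial_sf}$ already quoted in the paper, $f^c(x+Iy)=\bar b+I\bar c$. Since $|b+Ic|^2=|b|^2+|c|^2+2\mathrm{Re}(Ic\bar b)$ and $\mathrm{Re}(Iw)$ ranges over $[-|\mathrm{Im}(w)|,|\mathrm{Im}(w)|]$ as $I$ ranges over $\s$, while $|\mathrm{Im}(\bar cb)|=|\mathrm{Im}(c\bar b)|$ (the products $c\bar b$ and $\overline{\bar c b}=\bar b c$ are similar quaternions), the maxima of $|f|$ and of $|f^c|$ over each compact sphere $x+y\s\subset\B$ coincide; hence $|f|<1$ on $\B$ forces $|f^c|<1$ on $\B$. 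The appeal to $f^s$ having real coefficients is not needed. With this lemma inserted, your proof is complete.
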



\section{The Schwarz-Pick lemma}\label{schwarzpick}

In this section, we shall prove the announced Schwarz-Pick lemma for quaternionic regular functions.
In order to obtain it, we begin with a result concerning the special case of a function $f: \B \to \B$ having a zero. We follow the line of the complex proof, making use of the maximum modulus principle for regular functions proven in \cite{advances}.

\begin{theorem}\label{maximum}
Let $f : B(0,R) \to \hh$ be a regular function. If $|f|$ has a relative maximum at $p \in B(0,R)$, then $f$ is constant.
\end{theorem}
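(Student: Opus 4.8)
\emph{Proof proposal.} The plan is to reduce the statement to the one-variable maximum modulus principle on a single slice, where slice regularity becomes ordinary holomorphy. First I would choose $I \in \s$ with $p \in L_I$: if $p \notin \rr$ take $I = \mathrm{Im}(p)/|\mathrm{Im}(p)|$, and if $p \in \rr$ take any $I \in \s$. In either case $p$ lies in the open disk $\Omega_I = B(0,R) \cap L_I$, which is connected, and since a neighbourhood of $p$ in $B(0,R)$ meets $L_I$ in a neighbourhood of $p$ in $\Omega_I$, the restriction $f_I = f_{|_{\Omega_I}}$ satisfies $|f_I| \le |f(p)|$ near $p$; that is, $|f_I|$ has a relative maximum at $p$.

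The heart of the argument is that $|f_I|^2$ is subharmonic on $\Omega_I$. Writing $z = x+Iy$, the condition $\bar\partial_I f_I = 0$ reads $\partial_x f_I = -I\,\partial_y f_I$; differentiating and using equality of mixed partials (legitimate since $f_I$ is real-analytic) gives $(\partial_x^2 + \partial_y^2) f_I = 0$, so $f_I$ is harmonic as an $\hh = \rr^4$-valued function of $(x,y)$. Hence, with $\Delta = \partial_x^2 + \partial_y^2$ the Laplacian of the plane $L_I$ and $\langle\cdot,\cdot\rangle$ the Euclidean product on $\rr^4$, $\Delta|f_I|^2 = 2\langle\Delta f_I,f_I\rangle + 2\left(|\partial_x f_I|^2 + |\partial_y f_I|^2\right) = 2\left(|\partial_x f_I|^2 + |\partial_y f_I|^2\right) \ge 0$. (Equivalently, in the splitting $f_I = F + GJ$ with $J \in \s$, $J \perp I$ and $F,G : \Omega_I \to L_I$ holomorphic, one has $|f_I|^2 = |F|^2 + |G|^2$, a sum of subharmonic functions.) Since $|f_I|^2$ is subharmonic on the domain $\Omega_I$ and attains an interior relative maximum at $p$, the sub-mean value inequality forces $|f_I|^2 \equiv |f(p)|^2$ on some disk $U \subseteq \Omega_I$ centred at $p$.

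It then remains to promote ``$|f_I|$ constant on $U$'' to ``$f_I$ constant on $U$''. Since $|f_I|^2$ is constant on $U$ its Laplacian vanishes there, so by the displayed identity $\partial_x f_I \equiv \partial_y f_I \equiv 0$ on $U$, whence $f_I$ is constant on the connected set $U$, necessarily equal to $f(p)$. (In the splitting picture, $|F|^2$ and $|G|^2$ are nonnegative subharmonic functions with constant sum, so each is harmonic, forcing $F' \equiv G' \equiv 0$ and thus $F,G$, hence $f_I$, constant.) Finally I would apply the identity principle for regular functions of \cite{advances}: $f$ and the constant function $q \mapsto f(p)$ are regular on $B(0,R)$ and agree on $U$, a subset of $\Omega_I$ with accumulation points in $\Omega_I$, so $f \equiv f(p)$ on all of $B(0,R)$.

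I do not anticipate a genuine obstacle, this being essentially the classical argument transplanted slice by slice; the only points requiring attention are the reduction to $\Omega_I$ (a relative maximum of $|f|$ clearly restricts to one of $|f_I|$) and the step from ``$|f_I|$ constant'' to ``$f_I$ constant'', which is exactly where the harmonicity of $f_I$ --- equivalently the splitting lemma --- is used. Everything else is standard one-variable complex analysis together with the identity principle.
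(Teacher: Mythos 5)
Your proof is correct. Note, however, that the paper does not actually prove this statement: it is quoted verbatim as the maximum modulus principle established in \cite{advances}, so there is no internal argument to compare yours against. Your route --- restrict to the slice $L_I$ through $p$, observe via $\bar\partial_I f_I=0$ (equivalently via the splitting $f_I=F+GJ$ with $F,G$ holomorphic and $|f_I|^2=|F|^2+|G|^2$) that $|f_I|^2$ is subharmonic, force it to be constant on a disk $U$ about $p$ by the sub-mean-value inequality, upgrade to $f_I$ constant on $U$ from $\Delta|f_I|^2=2(|\partial_xf_I|^2+|\partial_yf_I|^2)=0$, and finish with the identity principle --- is a complete and self-contained proof in the same spirit as the one in the cited reference, which likewise reduces to a single slice through the splitting lemma and concludes by the identity principle. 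The two minor points you flag are indeed the only ones needing care, and both are fine: a relative maximum of $|f|$ restricts to one of $|f_I|$ because $\Omega_I$ carries the subspace topology, and the interchange of mixed partials is legitimate since regular functions on $B(0,R)$ are power series (hence $f_I$ is real-analytic), or simply because the splitting lemma reduces everything to holomorphic, hence smooth, components.
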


We now turn to the aforementioned result.

\begin{theorem} \label{teo1}
If $f : \B \to \B$ is regular and if $f(q_0)=0$ for some $q_0 \in \B$, and if 
\begin{equation}
\mr_{q_0}(q) = (q-q_0)*(1-q\bar q_0)^{-*}= (1-q\bar q_0)^{-*}*(q-q_0),
\end{equation}
then
\begin{equation}
|\mr_{q_0}^{-*}*f(q)| \leq 1
\end{equation}
for all $q \in \B$. The inequality is strict, unless $\mr_{q_0}^{-*}*f(q) \equiv u$ for some $u \in \partial \B$.
\end{theorem}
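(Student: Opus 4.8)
The plan is to transcribe the classical proof of the Schwarz lemma at a non‑central point: divide $f$ by the regular M\"obius factor $\mr_{q_0}$ that vanishes at $q_0$, and then bound the regular quotient by the maximum modulus principle (Theorem~\ref{maximum}).

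\emph{Regularity of the quotient.} Set $g:=\mr_{q_0}^{-*}*f$. The first, and I expect the only delicate, point is that $g$ must be regular on the \emph{whole} of $\B$: a priori it is only defined on $\B\setminus\z_{\mr_{q_0}^s}$, and — in contrast with the complex case, where the Blaschke factor has a single simple zero — the set $\z_{\mr_{q_0}^s}\cap\B$ is the entire $2$-sphere $x_0+y_0\s$ through $q_0$ (writing $q_0=x_0+I_0y_0$). To see that this singularity is removable I would use the alternative expression $\mr_{q_0}(q)=(1-q\bar q_0)^{-*}*(q-q_0)$ in the statement, which gives $g=(q-q_0)^{-*}*\big[(1-q\bar q_0)*f\big]$. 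Because $1-q_0\bar q_0=1-|q_0|^2$ is a nonzero real number, Theorem~\ref{quotients} yields $\big[(1-q\bar q_0)*f\big](q_0)=(1-|q_0|^2)f(q_0)=0$; by the factorization of zeros of regular functions (\cite{zeros,poli}) there is then a regular $G:\B\to\hh$ with $(1-q\bar q_0)*f=(q-q_0)*G$, whence $g=(q-q_0)^{-*}*(q-q_0)*G=G$ is regular on all of $\B$.

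\emph{The estimate.} For $q\in\B$ with $q\notin\z_{\mr_{q_0}^s}$ — in particular for every $q$ with $|q|>|q_0|$, since $\z_{\mr_{q_0}^s}\cap\B$ is contained in $\{q\in\hh:|q|=|q_0|\}$ — Theorem~\ref{quotients} gives $g(q)=\mr_{q_0}\big(T(q)\big)^{-1}f\big(T(q)\big)$, where $T:=T_{\mr_{q_0}}$ carries each $x+y\s$ onto itself, so that $|T(q)|=|q|$. Since $\mr_{q_0}$ is a regular M\"obius transformation of $\B$ it restricts to a bijection of $\B$ onto $\B$ and extends continuously to $\overline\B$, with $|\mr_{q_0}(p)|=1$ for $|p|=1$; hence $\mu(r):=\min_{|p|=r}|\mr_{q_0}(p)|\to1$ as $r\to1^-$. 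For $|q|=r>|q_0|$ we therefore get $|g(q)|=|f(T(q))|/|\mr_{q_0}(T(q))|<1/\mu(r)$, using $|f(T(q))|<1$ and $|T(q)|=r$. The maximum modulus principle (Theorem~\ref{maximum}) then gives $\max_{|q|\le r}|g|=\max_{|q|=r}|g|\le1/\mu(r)$ for all $r>|q_0|$, and letting $r\to1^-$ we conclude $|g(q)|\le1$ for every $q\in\B$.

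\emph{Equality case.} If $|g(q_1)|=1$ at some $q_1\in\B$, then by the previous step $|g|$ attains a relative maximum at $q_1$, so Theorem~\ref{maximum} forces $g$ to be a constant $u$ with $|u|=1$, i.e.\ $u\in\partial\B$; conversely such a $g$ obviously satisfies $|g|\equiv1$. Hence either $\mr_{q_0}^{-*}*f\equiv u\in\partial\B$, or $|\mr_{q_0}^{-*}*f(q)|<1$ at every $q\in\B$, as claimed. Every step except the first is the complex argument verbatim, with Theorem~\ref{quotients} doing the translation between the regular product and ordinary quaternionic multiplication; the crux is the removability of the apparent singularity of $g$ along the sphere $x_0+y_0\s$.
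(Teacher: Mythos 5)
Your proof is correct and follows essentially the same route as the paper's: remove the singularity of $\mr_{q_0}^{-*}*f$ along the sphere through $q_0$ by factoring out the zero of $f$, convert the regular quotient into a pointwise one via Theorem~\ref{quotients}, and conclude by letting $r\to1^-$ with the maximum modulus principle. The only substantive difference is that the paper controls $|\mr_{q_0}^{-*}(q)|$ by reducing to the \emph{classical} M\"obius map $M_{q_0}(q)=(1-q\bar q_0)^{-1}(q-q_0)$ composed with a modulus-preserving rotation $T$, whereas you assert $|\mr_{q_0}|\equiv 1$ on $\partial\B$ for the regular transformation directly — a true fact, but one that needs the same reduction (or a citation), since bijectivity of $\B$ onto $\B$ plus continuity up to $\overline{\B}$ does not by itself rule out boundary points being sent into $\B$.
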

\begin{proof}
Let us consider $\mr_{q_0}^{-*}(q) = (1-q\bar q_0)*(q-q_0)^{-*} = (q-q_0)^{-*} *(1-q\bar q_0)$, which is a regular function on $\B$ minus the $2$-sphere $S_{q_0} = x_0 +y_0 \s$ through $q_0$ (that is, minus the zero set of $(q-q_0)^s = (q-x_0)^2+y_0^2$). Since $f(q_0)=0$, we have $f(q) = (q-q_0)*R_{q_0}f(q)$ where $R_{q_0}f:\B \to \hh$ is the differential quotient defined by formula \eqref{R}. Hence setting
$$h(q) = \mr_{q_0}^{-*}*f(q) = (1-q\bar q_0)*R_{q_0}f(q)$$
defines a regular function on $\B$. Moreover, by the first part of theorem \ref{quotients}
$$h(q)=\mr_{q_0}^{-*}*f(q) = \mr_{q_0}^{-*}(q) f(g(q)^{-1}q g(q))$$
where $g = \mr_{q_0}^{-*}$. Since $|f|<1$ in $\B$, we conclude that
$$|h(q)| = \left|\mr_{q_0}^{-*}*f(q)\right| \leq \left|\mr_{q_0}^{-*}(q)\right|$$
away from $S_{q_0}$. 
Applying the second part of theorem \ref{quotients}, we notice that for all $q \in \B \setminus S_{q_0}$
$$\mr_{q_0}^{-*}(q)=(T_l(q)-q_0)^{-1} (1-T_l(q)\bar q_0) =\left[M_{q_0}(T_l(q))\right]^{-1}$$
where $l(q) = q-q_0$, and where $M_{q_0}(q) = (1-q\bar q_0)^{-1}(q-q_0)$. Now, $M_{q_0}$ maps $\B$ onto itself and $\partial \B$ onto itself, and for all $\varepsilon >0$ there exists $r$ with $|q_0|<r<1$ such that
$$1 \leq |M_{q_0}(q)|^{-1}\leq 1+\varepsilon$$
for $|q|\geq r$. Hence,
$$\max_{|q|=r} |h(q)| \leq \max_{|q|=r} |\mr_{q_0}^{-*}(q)| = \max_{|q|=r} |M_{q_0}(T_l(q))|^{-1} = \max_{|w|=r} |M_{q_0}(w)|^{-1} \leq 1+\varepsilon.$$
Suppose there existed $p \in \B, \delta >0$ such that $|h(p)| = 1 + \delta$. There would exist $r>|p|$ (with $r>|q_0|$) such that $\max_{|q|=r} |h(q)| \leq 1+ \delta/2$ and, by the maximum modulus principle $|h(q)| \leq 1+ \delta/2$ for $|q| \leq r$. We would then have $|h(p)| \leq 1+ \delta/2$, a contradiction with the hypothesis. Hence $|\mr_{q_0}^{-*}*f(q)| \leq 1$  for all $q \in \B$.

We conclude observing that, since $|\mr_{q_0}^{-*}*f(q)| \leq 1$ for all $q \in \B$, if there exists $\tilde q \in \B$ such that $|\mr_{q_0}^{-*}*f(\tilde q)| = 1$ then by the maximum modulus principle \ref{maximum}, $\mr_{q_0}^{-*}*f$ must be a constant $u \in \partial \B$.
\end{proof}

In order to reformulate the previous result as an analog of the Schwarz-Pick lemma, we will need some other instruments. The first of them is the following lemma.

\begin{lemma}\label{modulusproduct}
Let $f,g,h: B=B(0,R) \to \hh$ be regular functions. If $|f|\leq |g|$ then $|h*f| \leq |h*g|$. Moreover, if $|f|< |g|$ then $|h*f| < |h*g|$ in $B \setminus \z_h$.
\end{lemma}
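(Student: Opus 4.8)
The plan is to reduce the statement to the pointwise multiplicativity of the quaternionic modulus by invoking the product formula from Theorem~\ref{quotients}. First I would recall that, by the first part of that theorem applied to the regular functions $h$ and $f$, one has $h*f(q) = 0$ whenever $h(q) = 0$, and $h*f(q) = h(q)\, f\bigl(h(q)^{-1} q\, h(q)\bigr)$ otherwise, and likewise for $h*g$. The only elementary input needed is that, writing $T(q) = h(q)^{-1} q\, h(q)$ for $q \in B \setminus \z_h$, conjugation by the nonzero quaternion $h(q)$ preserves both the real part and the modulus of $q$; hence $|T(q)| = |q| < R$, so $T(q) \in B$, the values $f(T(q))$ and $g(T(q))$ are defined, and the hypothesis yields $|f(T(q))| \leq |g(T(q))|$.

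Next I would split into cases according to whether $h$ vanishes at the point under consideration. If $h(q) = 0$, then $h*f(q) = h*g(q) = 0$ and there is nothing to prove. If $h(q) \neq 0$, then taking moduli in the product formula and using $|ab| = |a|\,|b|$ for $a, b \in \hh$ gives
\[
|h*f(q)| = |h(q)|\,|f(T(q))| \leq |h(q)|\,|g(T(q))| = |h*g(q)|,
\]
so $|h*f| \leq |h*g|$ throughout $B$. For the strict statement I would note that on $B \setminus \z_h$ the factor $|h(q)|$ is strictly positive, so the same chain of (in)equalities, now fed with the strict hypothesis $|f(T(q))| < |g(T(q))|$, produces $|h*f(q)| < |h*g(q)|$ there.

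The only point that requires even a moment's care is the observation that the inner point $T(q)$ remains inside the domain $B$ — that is, that conjugation preserves the modulus — which is elementary and is in any case in the spirit of Theorem~\ref{quotients}, where $T_f$ is seen to map each sphere $x+y\s$ to itself. Beyond this, the proof is essentially a one-line computation, so I do not anticipate a genuine obstacle.
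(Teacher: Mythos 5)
Your proof is correct and follows essentially the same route as the paper: apply the product formula of Theorem~\ref{quotients}, take moduli using the multiplicativity of the quaternionic norm, and handle the zero set of $h$ separately. Your additional remark that $h(q)^{-1}qh(q)$ stays in $B$ (conjugation preserves the modulus) is a small detail the paper leaves implicit, but it does not change the argument.
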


\begin{proof}
If $|f|\leq |g|$ then for all $q \in B \setminus \z_h$
$$|f(h(q)^{-1}qh(q))| \leq |g(h(q)^{-1}qh(q))|$$
so that
$$|h*f(q)| = |h(q)|\cdot|f(h(q)^{-1}qh(q))| \leq |h(q)|\cdot|g(h(q)^{-1}qh(q))|| =  |h*g(q)|$$
thanks to theorem \ref{quotients}. The reasoning is also valid if all the inequalities are substituted by strict inequalities. Finally, for all $q \in \z_h$ we have $|h*f(q)|=0=|h*g(q)|$.
\end{proof}

Secondly, let us compute the differential quotient (and the derivatives) of $\mr_{q_0}$ at $q_0$.

\begin{remark}
In the case of the regular M\"obius transformation $\mr_{q_0}$, clearly $R_{q_0}\mr_{q_0}(q) = (1-q\bar q_0)^{-*}$, so that $\partial_c f (q_0) = \frac{1}{1-|q_0|^2}$ and $\partial_s f(q_0)= \frac{1}{1-\overline{q_0}^2}$.
\end{remark}

We are now in a position to suitably restate our result.

\begin{corollary}\label{schwarzp}
If $f : \B \to \B$ is regular, if $q_0 \in \B$ and if $f(q_0)=0$ then
\begin{equation}
|f(q)| \leq |\mr_{q_0}(q)|
\end{equation}
for all $q \in \B$. The inequality is strict at all $q \in \B \setminus \{q_0\}$, unless there exists $u \in \partial \B$ such that $f(q) = \mr_{q_0}(q) \cdot u$ at all $q \in \B$.
Moreover, $|R_{q_0}f(q)| \leq |(1-q\bar q_0)^{-*}|$ in $\B$ and in particular
\begin{eqnarray}
|\partial_c f (q_0)| \le \frac{1}{1-|q_0|^2}\\
|\partial_s f(q_0)|
\leq \frac{1}{|1-\overline{q_0}^2|}.
\end{eqnarray}
These inequalities are strict, unless $f(q) = \mr_{q_0}(q) \cdot u$ for some $u \in \partial \B$.
\end{corollary}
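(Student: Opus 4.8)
The plan is to derive Corollary \ref{schwarzp} directly from Theorem \ref{teo1} by combining it with Lemma \ref{modulusproduct} and the computation of $R_{q_0}\mr_{q_0}$. First I would observe that $f = \mr_{q_0} * (\mr_{q_0}^{-*} * f)$, so applying Lemma \ref{modulusproduct} with $h = \mr_{q_0}$, $g \equiv 1$, and the regular function $\mr_{q_0}^{-*}*f$ in place of the lemma's ``$f$'': since $|\mr_{q_0}^{-*}*f| \leq 1 = |1|$ on $\B$ by Theorem \ref{teo1}, we get $|f(q)| = |\mr_{q_0}*(\mr_{q_0}^{-*}*f)(q)| \leq |\mr_{q_0}*1(q)| = |\mr_{q_0}(q)|$ for all $q \in \B$. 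For the equality discussion, Theorem \ref{teo1} tells us $|\mr_{q_0}^{-*}*f(q)| \leq 1$ with equality somewhere (hence everywhere, giving a constant $u \in \partial\B$) unless the inequality is strict; in the strict case the strict part of Lemma \ref{modulusproduct} gives $|f(q)| < |\mr_{q_0}(q)|$ at every $q \in \B \setminus \z_{\mr_{q_0}}$, and since $\z_{\mr_{q_0}} = \{q_0\}$ this is exactly the strictness at all $q \neq q_0$; when $\mr_{q_0}^{-*}*f \equiv u$ we multiply back to get $f(q) = \mr_{q_0}(q)\, u$ pointwise (being careful that the regular product of $\mr_{q_0}$ with the constant $u$ is ordinary right multiplication by $u$).

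Next I would handle the differential-quotient inequality. Since $f(q_0) = 0$, we have $f(q) = (q-q_0)*R_{q_0}f(q)$ and likewise $\mr_{q_0}(q) = (q-q_0)*(1-q\bar q_0)^{-*}$, i.e.\ $(1-q\bar q_0)^{-*} = R_{q_0}\mr_{q_0}(q)$ as noted in the Remark. From $|f| \leq |\mr_{q_0}|$ on $\B$, i.e.\ $|(q-q_0)*R_{q_0}f(q)| \leq |(q-q_0)*(1-q\bar q_0)^{-*}|$, I want to ``cancel'' the common left factor $(q-q_0)$. This does not follow from Lemma \ref{modulusproduct} as stated (which goes the other way), so the cleaner route is to rewrite both sides using Theorem \ref{quotients}: off $S_{q_0}$ one has $|(q-q_0)*R_{q_0}f(q)| = |q-q_0|\cdot|R_{q_0}f(\varphi(q))|$ and $|(q-q_0)*(1-q\bar q_0)^{-*}| = |q-q_0|\cdot|(1-\varphi(q)\bar q_0)^{-1}|$ where $\varphi = T_{l}$ with $l(q) = q-q_0$ is the diffeomorphism of $\B \setminus S_{q_0}$ onto itself from Theorem \ref{quotients}. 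Dividing by $|q-q_0| \neq 0$ and using that $\varphi$ is onto, we conclude $|R_{q_0}f(w)| \leq |(1-w\bar q_0)^{-*}(w)|$ for all $w \in \B \setminus S_{q_0}$; on $S_{q_0}$ itself the inequality extends by continuity (both sides being continuous, as $R_{q_0}f$ is regular hence continuous on all of $\B$). Evaluating at $w = q_0$ gives $|\partial_c f(q_0)| = |R_{q_0}f(q_0)| \leq \frac{1}{1-|q_0|^2}$, and at $w = \bar q_0$ gives $|\partial_s f(q_0)| = |R_{q_0}f(\bar q_0)| \leq \frac{1}{|1-\overline{q_0}^2|}$, using the Remark's computation of $\partial_c \mr_{q_0}(q_0)$ and $\partial_s \mr_{q_0}(q_0)$.

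For the strictness of these last two inequalities: if $|R_{q_0}f| = |(1-q\bar q_0)^{-*}|$ at some point of $\B$, then tracing back through the $\varphi$-substitution gives $|f| = |\mr_{q_0}|$ at some $q \neq q_0$, which by the first part forces $f = \mr_{q_0}\cdot u$; conversely if $f = \mr_{q_0}\cdot u$ then $R_{q_0}f = R_{q_0}\mr_{q_0}\, u = (1-q\bar q_0)^{-*}u$ and equality holds throughout. So the inequalities are strict unless $f(q) = \mr_{q_0}(q)\, u$. The main obstacle I anticipate is the ``cancellation'' step — justifying that $|(q-q_0)*a(q)| \leq |(q-q_0)*b(q)|$ implies $|a| \leq |b|$ for regular $a,b$ — which must be done via the conjugation formula of Theorem \ref{quotients} and the surjectivity of $T_l$, rather than by naive left-cancellation of moduli (which is false for the $*$-product in general); one also has to be slightly careful near the sphere $S_{q_0}$, where $(1-q\bar q_0)^{-*}$ may have poles, handling that locus by a continuity argument on the regular function $R_{q_0}f$ and by noting that the inequality we actually need only involves points where everything is defined. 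A secondary point of care is the identification, used repeatedly, that the regular product of a function with a quaternionic constant on the right is ordinary multiplication, so that $\mr_{q_0}*u = \mr_{q_0}\cdot u$ and $R_{q_0}(\mr_{q_0}\cdot u) = (R_{q_0}\mr_{q_0})\cdot u$.
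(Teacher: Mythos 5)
Your first part coincides with the paper's proof (decompose $f=\mr_{q_0}*(\mr_{q_0}^{-*}*f)$, apply Lemma \ref{modulusproduct} with $h=\mr_{q_0}$ and $g\equiv1$, and use $\z_{\mr_{q_0}}=\{q_0\}$ for strictness). For the second part you take a genuinely different and more laborious route: the paper does not cancel $(q-q_0)$ at all, but instead left-$*$-multiplies the inequality $1\geq|\mr_{q_0}^{-*}*f|=|(1-q\bar q_0)*R_{q_0}f|$ by $h=(1-q\bar q_0)^{-*}$, which is regular and zero-free on all of $\B$, and invokes Lemma \ref{modulusproduct} once more; this gives $|R_{q_0}f|\leq|(1-q\bar q_0)^{-*}|$ on all of $\B$ in one stroke. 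Your workaround via the conjugation $\varphi(q)=(q-q_0)^{-1}q(q-q_0)$ does establish the non-strict inequality (two small inaccuracies: this map is $T_{l^c}$ rather than $T_l$ in the paper's notation, and $h(\varphi(q))$ is the regular reciprocal $(1-w\bar q_0)^{-*}$ evaluated at $w=\varphi(q)$, not $(1-\varphi(q)\bar q_0)^{-1}$ --- harmless, since your final statement uses the $-*$ form).

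There is, however, a genuine gap in your strictness argument for the two derivative inequalities. The map $\varphi$ is a bijection of $\B\setminus S_{q_0}$ onto itself, but the point $q_0$ is never in its image: for $q=x_0+Jy_0\in S_{q_0}\setminus\{q_0\}$ the equation $(J-I)^{-1}J(J-I)=I$ forces $J=I$, and for $q\notin S_{q_0}$ the image stays off $S_{q_0}$ (the real case $S_{q_0}=\{q_0\}$ is even more immediate). Hence an equality $|R_{q_0}f(q_0)|=|(1-q\bar q_0)^{-*}|_{|_{q_0}}$ cannot be ``traced back'' to an equality $|f(q)|=|\mr_{q_0}(q)|$ at some $q\neq q_0$, and the continuity extension from $\B\setminus S_{q_0}$ to $S_{q_0}$ preserves only $\leq$, never $<$. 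So your argument leaves open the possibility that $f$ is not of the form $\mr_{q_0}\cdot u$ and yet $|\partial_cf(q_0)|=\frac{1}{1-|q_0|^2}$ --- which is precisely the claim you need at the point $q_0\in S_{q_0}$. The repair is the paper's step: when $f\not\equiv\mr_{q_0}\cdot u$, Theorem \ref{teo1} gives $|(1-q\bar q_0)*R_{q_0}f|<1$ everywhere in $\B$, and the strict part of Lemma \ref{modulusproduct} applied with $h=(1-q\bar q_0)^{-*}$, whose zero set in $\B$ is empty, yields $|R_{q_0}f|<|(1-q\bar q_0)^{-*}|$ on all of $\B$, in particular at $q_0$ and at $\bar q_0$.
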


\begin{proof}
By theorem \ref{teo1},
$$|\mr_{q_0}^{-*}*f(q)|\leq 1$$
for all $q \in \B$. Since $\mr_{q_0}^{-*}*f(q)$ and the constant $1$ are regular in $\B$, by lemma \ref{modulusproduct}
$$|f| \leq |\mr_{q_0}|$$
in $\B$. According to theorem \ref{teo1}, all inequalities above are strict for $q \in \B \setminus \{q_0\}$, unless there exists $u \in \partial \B$ such that $\mr_{q_0}^{-*}*f(q) \equiv u$, that is, $f(q) = \mr_{q_0}(q) \cdot u$ for all $q \in \B$.

The second statement follows from
$$1 \geq |\mr_{q_0}^{-*}*f(q)| = |(1-q\bar q_0)*(q-q_0)^{-*}*f(q)|=|(1-q\bar q_0)* R_{q_0}f(q)|$$
applying lemma \ref{modulusproduct}, since $(1-q\bar q_0)^{-*}$ is regular and has no zeros in $\B$.
\end{proof}

We shall now generalize the previous corollary to an analog of the Schwarz-Pick lemma. We will make use of the Leibniz rules for $\partial_c, \partial_s$, from \cite{powerseries} and \cite{perotti} respectively. We recall that if $f$ is regular in $B(0,R)$ then
$$f(q) = v_s f(q) + Im(q) \partial_s f(q)$$
for all $q \in B(0,R)$ where $v_s f$ denotes the \emph{spherical value} $v_s f(q) = \frac{f(q)+f(\bar q)}{2}$ for all $q \in B(0,R)$.

\begin{remark}
If $f:B(0,R) \to \hh$ is a regular function then
\begin{eqnarray}
\partial_c (f*g)(q) &=& \partial_c f(q) * g(q) + f (q)*\partial_c g(q)\label{leibnizc}\\
\partial_s (f*g)(q) &=& \partial_s f(q) \cdot v_sg (q)+ v_s f(q) \cdot \partial_s g(q). \label{leibnizs}
\end{eqnarray}
\end{remark}

\begin{theorem}[Schwarz-Pick lemma] \label{MainSchwarzPick}
Let $f : \B \to \B$ be a regular function and let $q_0 \in \B$. Then in $\B$
\begin{eqnarray}
|(f(q)-f(q_0))*(1-\overline{f(q_0)}*f(q))^{-*}| &\leq& |(q-q_0)*(1-\bar q_0*q)^{-*}| \label{MainSchwarzPickEq}\\
|R_{q_0}f(q)*(1-\overline{f(q_0)}*f(q))^{-*}| &\leq& |(1-\bar q_0*q)^{-*}| \label{Req}
\end{eqnarray}
Moreover,
\begin{eqnarray}
|\partial_c f *(1-\overline{f(q_0)}*f(q))^{-*}|_{|_{q_0}} &\leq& \frac{1}{1-|q_0|^2} \label{culderiv}\\
\frac{|\partial_s f(q_0)|}{|1-f^s(q_0)|} &\leq& \frac{1}{|1-\overline{q_0}^2|}\label{derivsfera}
\end{eqnarray}
If $f$ is a regular M\"obius transformation then equality holds in \eqref{MainSchwarzPickEq},\eqref{Req} for all $q \in \B$, and in \eqref{culderiv},\eqref{derivsfera}. Else, all the aforementioned inequalities are strict (except for \eqref{MainSchwarzPickEq} at $q_0$, which reduces to $0\leq0$).
\end{theorem}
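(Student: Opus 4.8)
The plan is to reduce the whole statement to Corollary~\ref{schwarzp} by ``composing'' $f$ with regular M\"obius transformations, exactly as in the classical complex argument; the only subtlety is that composition is not an admissible operation on regular functions, so instead I use the left action of $Sp(1,1)$ on $\reg$, which \emph{is} admissible by Proposition~\ref{transform}. \textbf{Normalisation.} Put $p=f(q_0)$ and set $\tilde f=(f-p)*(1-\bar p*f)^{-*}$. Since $|\bar p*f|\le|p|<1$ on $\B$ (by Theorem~\ref{quotients} when $p\ne 0$, trivially otherwise), the function $1-\bar p*f$ has no zeros in $\B$, so $\tilde f$ is regular; moreover $\tilde f=A^{t}.f$ for the Hermitian matrix $A=\begin{bmatrix}1&-\bar p\\-p&1\end{bmatrix}$, which lies in $Sp(1,1)$ up to the positive scalar $1-|p|^{2}$, so $\tilde f\in\reg$ by Proposition~\ref{transform}; and $\tilde f(q_0)=0$ because the left factor $f-p$ vanishes at $q_0$.

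\textbf{The function inequalities.} Corollary~\ref{schwarzp}, applied to $\tilde f$ at $q_0$, gives $|\tilde f(q)|\le|\mr_{q_0}(q)|$ and $|R_{q_0}\tilde f(q)|\le|(1-q\bar q_0)^{-*}|$ on $\B$. Since $\mr_{q_0}(q)=(q-q_0)*(1-\bar q_0*q)^{-*}$ and $\bar q_0*q=q\bar q_0$, the first is exactly \eqref{MainSchwarzPickEq}. For \eqref{Req}, the Remark on $R_{q_0}f$ (formula \eqref{R}) gives $f-p=f-f(q_0)=(q-q_0)*R_{q_0}f$, hence $\tilde f=(q-q_0)*\bigl(R_{q_0}f*(1-\bar p*f)^{-*}\bigr)$ and so $R_{q_0}\tilde f=R_{q_0}f*(1-\overline{f(q_0)}*f)^{-*}$; substituting into the second inequality yields \eqref{Req}.

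\textbf{The derivative inequalities.} Corollary~\ref{schwarzp} applied to $\tilde f$ also gives $|\partial_c\tilde f(q_0)|\le(1-|q_0|^{2})^{-1}$ and $|\partial_s\tilde f(q_0)|\le|1-\overline{q_0}^2|^{-1}$, so it suffices to identify these left-hand sides. For \eqref{culderiv}: by the Remark, $\partial_c\tilde f(q_0)=R_{q_0}\tilde f(q_0)=\bigl(R_{q_0}f*(1-\overline{f(q_0)}*f)^{-*}\bigr)(q_0)$; since, by Theorem~\ref{quotients}, evaluating a regular product $g*h$ at $q_0$ depends on $g$ only through $g(q_0)$, and $R_{q_0}f(q_0)=\partial_c f(q_0)$, this equals $\bigl(\partial_c f*(1-\overline{f(q_0)}*f)^{-*}\bigr)(q_0)$, which is \eqref{culderiv}. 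For \eqref{derivsfera}: again by Theorem~\ref{quotients}, $\partial_s\tilde f(q_0)=R_{q_0}\tilde f(\bar q_0)=R_{q_0}f(\bar q_0)\cdot h(w)=\partial_s f(q_0)\cdot h(w)$, where $h=(1-\overline{f(q_0)}*f)^{-*}$ and $w=\partial_s f(q_0)^{-1}\bar q_0\,\partial_s f(q_0)$ lies on the sphere through $q_0$. Writing $\psi=1-\overline{f(q_0)}*f$, the identity $\psi^{-*}=(\psi^{s})^{-1}\psi^{c}$ and the fact that $|\psi^{s}|$ is constant on that sphere ($\psi^{s}$ having real coefficients) give $|h(w)|=|\psi^{c}(w)|/|\psi^{s}(q_0)|$; since $\psi^{c}=1-f^{c}*f(q_0)$ and, from $f^{c}(x_0+Jy_0)=\overline{v_s f(q_0)}+Jy_0\,\overline{\partial_s f(q_0)}$, one computes $f^{c}(w)=\overline{f(q_0)}$, this gives $\psi^{c}(w)=1-|f(q_0)|^{2}$. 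Finally, evaluating the symmetrization $\psi^{s}=1-f^{c}*f(q_0)-\overline{f(q_0)}*f+|f(q_0)|^{2}f^{s}$ at $q_0$ yields $|\psi^{s}(q_0)|=(1-|f(q_0)|^{2})\,|1-f^{s}(q_0)|$, whence $|h(w)|=|1-f^{s}(q_0)|^{-1}$ and \eqref{derivsfera} follows. The main obstacle is precisely this last identity, relating the symmetrization of $1-\overline{f(q_0)}*f$ to $f^{s}(q_0)$: it seems to require an honest computation with the slice decompositions of $f$, $f^{c}$ and $f^{s}$ on the sphere through $q_0$, whereas everything else is formal.

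\textbf{Equality.} If $f$ is a regular M\"obius transformation, so is $\tilde f=A^{t}.f$ (Proposition~\ref{transform}); since $\tilde f(q_0)=0$, comparison with the description of $\mathfrak{M}$ in Section~\ref{regularmoebius} forces $\tilde f=\mr_{q_0}\cdot u$ for a unique $u\in\partial\B$, and then Corollary~\ref{schwarzp} and the computations above give equality throughout. Conversely, equality in any of \eqref{MainSchwarzPickEq} (for some $q\ne q_0$), \eqref{Req}, \eqref{culderiv} or \eqref{derivsfera} forces, through the rigidity part of Corollary~\ref{schwarzp}, $\tilde f=\mr_{q_0}\cdot u$ for some $u\in\partial\B$; recovering $f$ from $\tilde f$ via the inverse $Sp(1,1)$-action then shows that $f$ is a regular M\"obius transformation.
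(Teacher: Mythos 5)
Your overall strategy is the paper's: normalise via $\tilde f=(f-f(q_0))*(1-\overline{f(q_0)}*f)^{-*}$, check $\tilde f\in\reg$ and $\tilde f(q_0)=0$, and apply Corollary \ref{schwarzp}. Your treatment of \eqref{MainSchwarzPickEq}, \eqref{Req} and of the equality discussion is correct, and your proof of \eqref{culderiv} is a legitimate variant of the paper's: where the paper applies the $*$-Leibniz rule \eqref{leibnizc} to $\partial_c\tilde f$ and kills the term carrying the factor $f-f(q_0)$ at $q_0$, you use instead that $(g*h)(q_0)$ depends on $g$ only through $g(q_0)$ (Theorem \ref{quotients}) together with $R_{q_0}f(q_0)=\partial_cf(q_0)$. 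Both arguments are sound.

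The genuine gap is in \eqref{derivsfera}, and you have located it yourself. Your chain correctly reduces everything to the claim $|\psi^s(q_0)|=(1-|f(q_0)|^2)\,|1-f^s(q_0)|$ for $\psi=1-\overline{f(q_0)}*f$, but this does not follow by ``evaluating'' $\psi^s=1-f^c*f(q_0)-\overline{f(q_0)}*f+|f(q_0)|^2f^s$ at $q_0$ term by term: by Theorem \ref{quotients}, $(\overline{f(q_0)}*f)(q_0)=\overline{f(q_0)}\,f\big(\overline{f(q_0)}^{-1}q_0\overline{f(q_0)}\big)$ is not $|f(q_0)|^2$, and $\big(f^c*f(q_0)\big)(q_0)=f^c(q_0)f(q_0)$ is not $f^s(q_0)$. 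What is actually needed is the cross-term identity $f^c(q_0)f(q_0)+(\overline{f(q_0)}*f)(q_0)=f^s(q_0)+|f(q_0)|^2$, which is true but is precisely the ``honest computation'' you defer; as written, the proof of \eqref{derivsfera} is therefore incomplete. The paper sidesteps this corner by exploiting the other factorisation from Proposition \ref{transform}: writing $\tilde f=g*(f-f(q_0))$ with $g=(1-f\overline{f(q_0)})^{-*}$, applying the spherical Leibniz rule \eqref{leibnizs} together with $v_s(f-f(q_0))(q_0)=-Im(q_0)\partial_sf(q_0)$ to get $\partial_s\tilde f(q_0)=\overline{g^c(q_0)}\,\partial_sf(q_0)$, and then evaluating $g^c(q_0)=h(T_h(q_0))^{-1}=h(q_0)^{-1}$ with $h=1-f(q_0)*f^c$ --- the key point being that $T_h(q_0)=q_0$ because $1-f(q_0)\overline{f(q_0)}=1-|f(q_0)|^2$ is real, after which $\big(f(q_0)*f^c\big)(q_0)=f^s(q_0)$ is a clean power-series manipulation. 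To finish your version, either prove the cross-term identity above (e.g.\ by the same slice decomposition you used to obtain $f^c(w)=\overline{f(q_0)}$), or switch to the paper's factorisation for this one estimate.
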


\begin{proof}
Thanks to proposition \ref{transform}, 
$$\tilde f(q) = (f(q)-f(q_0))*(1-\overline{f(q_0)}*f(q))^{-*}$$ 
is a regular function $\B \to \B$. Since $f(q) -f(q_0)$ has a zero at $q_0$, by theorem \ref{quotients} the product $\tilde f$ has the additional property that $\tilde f(q_0) = 0$. Inequalities \eqref{MainSchwarzPickEq} and \eqref{Req} now follow applying corollary \ref{schwarzp} to $\tilde f$ (taking into account that $\bar q_0 * q = q \bar q_0$). They are strict unless $\tilde f(q) = \mr_{q_0}(q) \cdot u$ for some $u \in \partial \B$, which is true if and only if $f$ is a regular M\"obius transformation of $\B$.

Inequality \eqref{culderiv} follows from corollary \ref{schwarzp} and from the fact that, according to formula \eqref{leibnizc}
$$\partial_c \tilde f(q) = \partial_c f(q) * (1-\overline{f(q_0)}*f(q))^{-*} + (f(q)-f(q_0)) *\partial_c (1-\overline{f(q_0)}*f(q))^{-*},$$
where the second term vanishes at $q_0$ by theorem \ref{quotients}.

As for \eqref{derivsfera}, it derives again from corollary \ref{schwarzp} proving that 
$$\partial_s \tilde f(q_0) =[1-\overline{f^s(q_0)}]^{-1} \partial_s f(q_0).$$ 
Indeed, setting $g(q) = (1-f(q)\overline{f(q_0)})^{-*}$, formula \eqref{leibnizs} and proposition \ref{transform} imply
\begin{align*}
\partial_s \tilde f(q_0) &= \partial_s\left[g(g)*(f(q)-f(q_0))\right]_{|_{q_0}} \\
&= \partial_s g(q_0) \cdot v_s (f(q)-f(q_0))_{|_{q_0}} + v_s g(q_0) \cdot \partial_s (f(q)-f(q_0))_{|_{q_0}}\\
&= -\partial_s g(q_0) \cdot Im(q_0) \cdot \partial_s f(q_0)+  v_s g(q_0) \cdot \partial_s f(q_0)\\
&=  \overline{ \left[ \overline{v_s g(q_0)} + Im(q_0) \overline{\partial_s g(q_0)}\right]}\cdot \partial_s f(q_0)\\
&= \overline{g^c(q_0)} \cdot \partial_s f(q_0)
\end{align*}
where we have taken into account that, according to \cite{perotti}, $v_s g^c(q) = \overline{v_s g(q)}$ and $\partial_s g^c(q) = \overline{\partial_s g(q)}$. Thanks to theorem \ref{quotients}, $g^c(q) = h^{-*}(q) = h(T_h(q))^{-1}$ where $h(q) = (1-f(q)\overline{f(q_0)})^c = 1-f(q_0)*f^c(q)$ and 
$$T_h(q) =  (1-f(q)\overline{f(q_0)})^{-1}\,  q\,  (1-f(q)\overline{f(q_0)}).$$
Since 
$$1-f(q_0)\overline{f(q_0)} = 1-|f(q_0)|^2$$ 
is real, $T_h(q_0) = q_0$ and $g^c(q_0) =h(q_0)^{-1}$. Furthermore, if $f(q) = \sum_{n \in \nn} q^n a_n$ then
$$f(q_0)*f^c(q)=f(q_0)*\sum_{n \in \nn} q^n \bar a_n =\sum_{n \in \nn} q^n f(q_0)\bar a_n = \sum_{m,n \in \nn} q^n q_0^m a_m\bar a_n$$
equals $f^s(q) = \sum_{k \in \nn} q^k \sum_{m = 0}^k a_m \bar a_{k-m}$ at $q_0$. Hence, $h(q_0) = 1-f^s(q_0)$, and the proof is complete.
\end{proof}

According to theorem \ref{quotients}
$$\partial_c f *(1-\overline{f(q_0)}*f(q))^{-*}_{|_{q_0}} =  \partial_c f(q_0) (1-\overline{f(q_0)}f(\tilde q_0))^{-1}$$
where $\tilde q_0 = T_g\left(\partial_c f(q_0)^{-1}q_0 \partial_c f(q_0)\right)$ and $g(q) = 1-\overline{f(q_0)}*f(q)$. Hence, inequality \eqref{culderiv} can be restated as
$$ \frac{|\partial_c f(q_0)|}{|1-\overline{f(q_0)}f(\tilde q_0)|} \leq  \frac{1}{1-|q_0|^2}$$
which closely resembles the complex estimate \eqref{complexdiff}.


\section{Applications of the Schwarz-Pick lemma}\label{sectionapplications}

As an application of our main result \ref{MainSchwarzPick}, we study the fixed point case extending the work done in \cite{rigidity}. In the proofs, we thoroughly use the following property of the zero set proven in \cite{zeros} (an immediate consequence of theorem \ref{quotients}).

\begin{corollary}\label{productzeros}
Let $f,g,h$ be regular functions on $B=B(0,R)$. Then $f*g$ vanishes at a point $q_0 = x_0 +I y_0$ if, and only if, either $f(q_0)=0$ or $f(q_0)^{-1}q_0 f(q_0) = x_0+ f(q_0)^{-1}If(q_0) y_0$ is a zero of $g$. As a consequence, if $f,h$ vanish nowhere in $B$  then each $x_0+y_0\s \subset B$ contains as many zeros of $f*g*h$ as zeros of $g$.
\end{corollary}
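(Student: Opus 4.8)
The plan is to derive both statements directly from Theorem~\ref{quotients}, exactly as the parenthetical remark in the statement announces. For the first equivalence, I would note that when $f(q_0)\neq 0$ the first displayed formula of Theorem~\ref{quotients} reads $f*g(q_0)=f(q_0)\,g\!\left(f(q_0)^{-1}q_0f(q_0)\right)$, so, $\hh$ being a division ring, $f*g(q_0)=0$ if and only if $g$ vanishes at $f(q_0)^{-1}q_0f(q_0)$. Writing $q_0=x_0+Iy_0$ and using that $x_0,y_0\in\rr$ are central in $\hh$, this point equals $x_0+\left(f(q_0)^{-1}If(q_0)\right)y_0$, and since $\left(f(q_0)^{-1}If(q_0)\right)^2=-1$ it lies on $x_0+y_0\s$. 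Together with the trivial case $f(q_0)=0$, this is the claimed equivalence.

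For the ``as a consequence'' part I would first check that the hypotheses propagate. Since $f$ and $h$ vanish nowhere in $B$, Theorem~\ref{conjugatezeros} forces $f^c$ and $h^c$ to vanish nowhere, hence $f^s=f*f^c$ and $h^s$ to vanish nowhere as well; in particular $\z_{f^s}=\emptyset$. Consequently the map $T_{f^c}(q)=f(q)^{-1}qf(q)$ --- which is the inverse of $T_f$ in the notation of Theorem~\ref{quotients} --- is defined on all of $B$ and, for each $x_0+y_0\s\subset B$, restricts to a self-diffeomorphism of that sphere.

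Now I would apply the first part twice, writing $f*g*h=f*(g*h)$. Because $h$ vanishes nowhere, the first part applied to the pair $g,h$ shows that $g*h$ vanishes at a point $p$ if and only if $g(p)=0$, i.e. $\z_{g*h}=\z_g$. Because $f$ vanishes nowhere, the first part applied to the pair $f,g*h$ shows that $f*(g*h)$ vanishes at $q$ if and only if $g*h$ --- equivalently $g$ --- vanishes at $f(q)^{-1}qf(q)=T_{f^c}(q)$. Hence $T_{f^c}$ restricts to a bijection of $\z_{f*g*h}\cap(x_0+y_0\s)$ onto $\z_g\cap(x_0+y_0\s)$; since $T_{f^c}$ is a diffeomorphism of the sphere, it carries a lone zero to a lone zero and the whole sphere to the whole sphere, so $x_0+y_0\s$ does contain as many zeros of $f*g*h$ as of $g$.

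I do not expect a genuine obstacle: the proof is essentially bookkeeping. The one place to be careful is the second step --- verifying that $f,h$ being zero-free really does make $\z_{f^s}$ empty, so that $T_{f^c}$ is globally defined, and recognizing the conjugation $q\mapsto f(q)^{-1}qf(q)$ as exactly the sphere-preserving diffeomorphism $T_{f^c}$ furnished by Theorem~\ref{quotients}, after which the transfer of zeros is automatic.
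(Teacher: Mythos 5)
Your proof is correct and follows exactly the route the paper indicates: the paper gives no proof of this corollary, presenting it as an immediate consequence of Theorem~\ref{quotients} (citing \cite{zeros}), and your argument --- the pointwise factorization $f*g(q_0)=f(q_0)\,g(f(q_0)^{-1}q_0f(q_0))$ for the equivalence, plus the observation that $T_{f^c}(q)=f(q)^{-1}qf(q)$ is a sphere-preserving bijection defined on all of $B$ when $\z_{f^s}=\emptyset$ --- is precisely the intended bookkeeping.
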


We are now ready for our study. The next result had been proven in \cite{rigidity} in the special case when $q_0$ is real, in the interval $(-1,1)$.

\begin{theorem}\label{fixedpoints}
Let $f : \B \to \B$ be a regular function and suppose $f$ to have a fixed point $q_0 \in \B$. Then either $f$ is the identity function, or $f$ has no other fixed point in $\B$.
\end{theorem}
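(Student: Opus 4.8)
The plan is to show that two distinct fixed points of $f$ force $f$ to be a regular M\"obius transformation, and then that a regular M\"obius transformation with two fixed points must be the identity.

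Suppose $f(q_0)=q_0$ and $f(q_1)=q_1$ with $q_0\neq q_1$, and set $\tilde f(q)=(f(q)-q_0)*(1-\bar q_0*f(q))^{-*}$. By proposition \ref{transform}, $\tilde f\in\reg$; by theorem \ref{quotients}, applied to the factor $f(q)-q_0$ which vanishes at $q_0$, we get $\tilde f(q_0)=0$; and if $f\in\mathfrak{M}$ then $\tilde f\in\mathfrak{M}$. By the statement of theorem \ref{MainSchwarzPick}, $f$ is a regular M\"obius transformation as soon as equality holds in \eqref{MainSchwarzPickEq} at the single point $q_1$, that is, as soon as $|\tilde f(q_1)|=|\mr_{q_0}(q_1)|$; proving this identity is the crux. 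The guiding computation is the following: since $f(q_0)=q_0$ and $f(q_1)=q_1$, the differential quotient satisfies $R_{q_0}f(q_1)=(q_1-q_0)^{-1}(f(q_1)-f(q_0))=1$, so, writing $\tilde f=(q-q_0)*(R_{q_0}f*(1-\bar q_0*f)^{-*})$ and $\mr_{q_0}=(q-q_0)*(1-q\bar q_0)^{-*}$ and expanding both regular products at $q_1$ by means of theorem \ref{quotients}, the two moduli reduce to one and the same classical expression $|q_1-q_0|/|1-\tilde q_1\bar q_0|$ for an appropriate point $\tilde q_1$ on the $2$-sphere $S_{q_1}$ through $q_1$ --- provided $f$ acts as the identity at the conjugates of $q_1$ on $S_{q_1}$ that theorem \ref{quotients} introduces.

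That last proviso is what I expect to be the main obstacle: theorem \ref{quotients} rewrites each regular product as an ordinary product evaluated not at $q_1$ but at a rotated copy $g(q_1)^{-1}q_1 g(q_1)\in S_{q_1}$, so one needs to control $f$ (and $f^c$) there, not merely at $q_1$ itself. The way to deal with it is to first record the elementary fact that a regular self-map of $\B$ that fixes two distinct points of a single $2$-sphere $x+y\s$ must fix the whole sphere: indeed, comparing the identity $f(x+Jy)=v_s f(q)+Jy\,\partial_s f(q)$ at two such points forces $\partial_s f=1$ and $v_s f=x$ there, whence $f$ fixes every rotated copy of that sphere too. One then treats separately the case $q_1\in S_{q_0}$, where this fact applies at once and turns every conjugate arising in the expansion into a fixed point of $f$, so the computation above goes through verbatim; and the case $q_1\notin S_{q_0}$, where one first argues --- combining the spherical data of $f$ at $q_0$ and $q_1$ with the inequalities of theorem \ref{MainSchwarzPick} centered at each of the two fixed points --- that $f$ must again fix all of $S_{q_1}$, and then concludes in the same way. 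In either case $f\in\mathfrak{M}$.

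It remains to handle $f\in\mathfrak{M}$. Then $\tilde f\in\mathfrak{M}$ and $\tilde f(q_0)=0$; since every regular M\"obius transformation is of the form $(q-a)*(1-\bar a*q)^{-*}u$ with $a\in\B$, $u\in\partial\B$, and such a map fixes $0$ only when $a=0$, we get $\tilde f(q)=q\,u$ for some $u\in\partial\B$. On the other hand, for $f\in\mathfrak{M}$ theorem \ref{MainSchwarzPick} gives equality in \eqref{MainSchwarzPickEq} for all $q$, so $|\tilde f|=|\mr_{q_0}|$ on $\B$ and corollary \ref{schwarzp} yields $\tilde f=\mr_{q_0}\cdot u'$ for some $u'\in\partial\B$. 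Comparing the two expressions for $\tilde f$, $\mr_{q_0}(q)\,u'=q\,u$ on all of $\B$; evaluating at $q=0$ and using $\mr_{q_0}(0)=-q_0$ forces $q_0=0$. But then $\mr_{q_0}$ is the identity, so $f=\tilde f$ is the rotation $q\mapsto q\,u$, and $f(q_1)=q_1$ with $q_1\neq q_0=0$ forces $u=1$. Hence $f$ is the identity, as required.
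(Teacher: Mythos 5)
Your overall strategy is the paper's: first show that a second fixed point forces $f$ to be a regular M\"obius transformation, then show that a regular M\"obius transformation with two fixed points is the identity. However, both halves have genuine gaps. In the first half, the case $q_1\notin S_{q_0}$ is not handled: you assert that $f$ ``must again fix all of $S_{q_1}$'' by ``combining the spherical data of $f$ at $q_0$ and $q_1$ with the inequalities of theorem \ref{MainSchwarzPick}'', but no such argument is given, and none of the displayed inequalities yields $\partial_s f(q_1)=1$ from the single datum $f(q_1)=q_1$. (Indeed, that claim is essentially equivalence $(1)\Leftrightarrow(4)$ of theorem \ref{cartan}, whose proof in the paper rests on the very theorem you are proving.) The difficulty you correctly identify --- that theorem \ref{quotients} evaluates the factors at rotated points of the sphere --- is exactly why the paper does \emph{not} try to prove equality at $q_1$ itself: instead it applies corollary \ref{productzeros} to the identity $(1-f(q)\bar q_0)*(\tilde f-\mr_{q_0})*(1-q\bar q_0)=[f(q)-q](1-|q_0|^2)$, concluding that $\tilde f-\mr_{q_0}$ vanishes at \emph{some} point $\tilde q_1$ of the sphere $x_1+y_1\s$; equality of moduli at that point, which is $\neq q_0$, already triggers the strictness clause of theorem \ref{MainSchwarzPick}. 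Your lemma that a regular self-map fixing two distinct points of one $2$-sphere fixes the whole sphere is correct and nicely handles the case $q_1\in S_{q_0}$, but the other case is left open.

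The second half contains an outright error: from $\tilde f\in\mathfrak{M}$ and $\tilde f(q_0)=0$ you conclude that $\tilde f$ ``fixes $0$'' and hence equals $q\mapsto qu$. But $\tilde f(q_0)=0$ is not $\tilde f(0)=0$; the general form $(q-a)*(1-\bar a*q)^{-*}u$ vanishes precisely at $q=a$, so the correct conclusion is $a=q_0$, i.e.\ $\tilde f=\mr_{q_0}\cdot u$ --- which is exactly what your second derivation via corollary \ref{schwarzp} gives. The two expressions then agree for every $q_0$, no contradiction arises, and the deduction $q_0=0$ collapses; note also that the second fixed point $q_1$ is never actually used to constrain the M\"obius transformation until after that unfounded step. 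The paper closes this half differently: by corollary \ref{productzeros} the fixed points of $(1-q\bar a)^{-*}*(q-a)u$ in $\B$ correspond to the zeros of $P(q)=q^2\bar a+q(u-1)-au$, and if $P=(q-\alpha)*(q-\beta)\bar a\not\equiv 0$ then $|\alpha\beta|=1$, so $\alpha,\beta$ cannot both lie in $\B$. You would need to replace your argument with something of this kind.
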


\begin{proof}
Since $f(q_0) = q_0$, the inequality \eqref{MainSchwarzPickEq} becomes $|\tilde f| \leq |\mr_{q_0}|$ with
$$\tilde f(q) = (f(q)-q_0)*(1-\bar q_0*f(q))^{-*} = (1-f(q)\bar q_0)^{-*}*(f(q)-q_0).$$
Let us consider the set of points where $\tilde f$ and $\mr_{q_0}$ coincide: thanks to corollary \ref{productzeros}, each $x+y\s \subset \B$ contains as many zeros of $\tilde f - \mr_{q_0}$  as zeros of 
\begin{align*}
&(1-f(q)\overline{q_0})*(\tilde f(q) - \mr_{q_0}(q))*(1-q\bar q_0)=\\
& (f(q)-q_0)*(1-q\bar q_0) - (1-f(q)\bar q_0)*(q-q_0)=\\
& f(q) *[1-q\bar q_0 + \bar q_0*(q-q_0)] - q_0 *(1-q\bar q_0) - (q-q_0) =\\
& f(q) (1-|q_0|^2) - q (1-|q_0|^2) = [f(q)-q] (1-|q_0|^2).
\end{align*}
Clearly, the zero set of the last function is the fixed point set of $f$. 

Now let us suppose $f$ to have another fixed point $q_1=x_1+Iy_1\neq q_0$. If $q_0 \in x_1+y_1\s$ then the fixed point set contains the whole $2$-sphere $x_1+y_1\s$, and so does the zero set of $\tilde f - \mr_{q_0}$; in particular $|\tilde f(x_1+Jy_1)| = |\mr_{q_0}(x_1+Jy_1)|$ for all $J \in \s$. If, on the contrary, $q_0$ and $q_1$ lie in different spheres, then $\tilde f - \mr_{q_0}$ has a zero $\tilde q_1 \in x_1+y_1\s$ and in particular $|\tilde f(\tilde q_1)| = |\mr_{q_0}(\tilde q_1)|$ with $\tilde q_1 \neq q_0$. In both cases, according to the regular Schwarz-Pick lemma (theorem \ref{MainSchwarzPick}), $f$ must be a regular M\"obius transformation of $\B$.

We are left with proving that a regular M\"obius transformation of $\B$ having more than one fixed point in $\B$ must be the identity function. According to corollary \ref{productzeros}, for all $a \in \B, u \in \partial \B$, the difference $(1-q\bar a)^{-*}*(q-a) u -q$ has more than one zero in $\B$ if, and only if,
$$P(q) = (q-a) u-(1-q\bar a)*q = q^2 \bar a + q (u-1) -a u$$
does. Now, if the last polynomial factorizes as $P(q) = (q-\alpha)*(q-\beta) \bar a$ then $\alpha \beta\bar a =  -a u$. Either $a=0$ (in which case $P \equiv 0$ and the transformation coincides with the identity) or $|\alpha\beta| = 1$. In the latter case, $\alpha$ and $\beta$ cannot both lie in $\B$, and $P(q)$ cannot have more than one zero in $\B$. Thus, $(1-q\bar a)^{-*}*(q-a) u$ cannot have more than one fixed point in $\B$, and our proof is complete.
\end{proof}

As a byproduct of the previous proof, we observe that a regular M\"obius transformation of $\B$ having a fixed point in $\B$ either is the identity or has no other fixed point in $\overline{\B}$.

Another nice application of our main theorem \ref{MainSchwarzPick} is a direct proof of a result of \cite{rigidity}: the analog of Cartan's rigidity theorem for regular functions. In that paper, the result was proven using a ``slicewise'' technique: that is, reducing to the complex Cartan theorem. A direct approach is now possible, and it allows a generalization of the statement.

\begin{theorem}\label{cartan}
Let $f : \B \to \B$ be a regular function and suppose $f$ to have a fixed point $q_0 \in \B$. The following facts are equivalent:
\begin{enumerate}
\item[$(1)$] $f$ coincides with the identity function;
\item[$(2)$] the real differential of $f$ at $q_0$ is the identity;
\item[$(3)$] the Cullen derivative $\partial_c f (q_0)$ equals $1$;
\item[$(4)$] the spherical derivative $\partial_s f(q_0)$ equals $1$;
\item[$(5)$] $R_{q_0}f(q)$ equals $(1-\bar q_0 * q)^{-*}*(1-\bar q_0*f(q))$ at some $q \in \B$.
\end{enumerate}
\end{theorem}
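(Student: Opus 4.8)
The plan is to prove the cycle of implications $(1) \Rightarrow (2) \Rightarrow (3) \Rightarrow (4) \Rightarrow (5) \Rightarrow (1)$, using the regular Schwarz-Pick lemma (Theorem \ref{MainSchwarzPick}) to close the loop from the weakest hypothesis $(5)$ back to $(1)$. The implication $(1) \Rightarrow (2)$ is trivial, and $(2) \Rightarrow (3)$ is immediate from the description of the real differential recalled in the introduction: at $q_0 = x_0 + I y_0$ with $y_0 \neq 0$, the differential acts on $L_I$ by right multiplication by $\partial_c f(q_0)$, so the differential being the identity forces $\partial_c f(q_0) = 1$; at a real point $q_0$ the differential is right multiplication by $\partial_c f(q_0)$, giving the same conclusion. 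For $(3) \Rightarrow (4)$ (and vice versa, though the cyclic scheme only needs one direction), I would observe that in inequality \eqref{culderiv} of Theorem \ref{MainSchwarzPick}, the hypothesis $f(q_0) = q_0$ together with $\partial_c f(q_0) = 1$ forces $\frac{|\partial_c f(q_0)|}{|1 - |q_0|^2|} = \frac{1}{1-|q_0|^2}$, i.e.\ equality in \eqref{culderiv}; by the equality clause of the Schwarz-Pick lemma, $f$ must then be a regular M\"obius transformation, and one checks using the computation of $\partial_s \mr_{q_0}$ in the Remark before Corollary \ref{schwarzp} (combined with the formula $\partial_s \tilde f(q_0) = [1-\overline{f^s(q_0)}]^{-1}\partial_s f(q_0)$ from the proof of Theorem \ref{MainSchwarzPick}) that $\partial_s f(q_0) = 1$ as well. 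Actually it is cleaner to observe directly: equality in \eqref{culderiv} implies $f \in \mathfrak{M}$, hence equality holds in \emph{all} the formulas of Theorem \ref{MainSchwarzPick}, in particular in \eqref{derivsfera}, and a short manipulation using $f(q_0) = q_0$ gives $\partial_s f(q_0) = 1$.

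The key step is $(5) \Rightarrow (1)$, and I would reduce it to the same equality-forcing mechanism. The condition that $R_{q_0}f(q)$ equals $(1-\bar q_0 * q)^{-*}*(1-\bar q_0 * f(q))$ at some point $q$ is, after multiplying through, exactly the statement that equality holds in \eqref{Req} at that $q$: indeed $R_{q_0}f(q) * (1-\overline{f(q_0)}*f(q))^{-*} = R_{q_0}f(q)*(1-\bar q_0 * f(q))^{-*}$ (using $f(q_0) = q_0$), and the right-hand side of \eqref{Req} is $|(1-\bar q_0 * q)^{-*}|$, so \eqref{Req} reads $|R_{q_0}f(q)*(1-\bar q_0*f(q))^{-*}| \le |(1-\bar q_0*q)^{-*}|$; condition $(5)$ says the two sides are equal at $q$ (after taking moduli and using Lemma \ref{modulusproduct} in the form $|a*b^{-*}| = |a|\,|b^{-*}|$ appropriately — this needs a small check). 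By the equality clause of Theorem \ref{MainSchwarzPick}, if equality is attained at even one point $q \neq q_0$ then $f$ is a regular M\"obius transformation; then since $f$ also has the fixed point $q_0$, Theorem \ref{fixedpoints} tells us $f$ is either the identity or has $q_0$ as its unique fixed point — and in the latter case I must still rule it out by showing equality in \eqref{Req} actually forces the identity. Here the observation appended after the proof of Theorem \ref{fixedpoints} is the right tool: a regular M\"obius transformation with a fixed point is either the identity or has no other fixed point in $\overline{\B}$; combined with the strictness statement, equality in \eqref{Req} at an interior point can only happen for the identity.

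The main obstacle I anticipate is handling the degenerate cases in $(5) \Rightarrow (1)$: the point $q$ at which equality is assumed might equal $q_0$ (in which case the equality is vacuous, $0 \le 0$, and gives no information), and the differential-quotient expression $R_{q_0}f(q)$ is only defined away from $\z_{(q-q_0)^s} = S_{q_0}$, so one must be careful about where the identity in $(5)$ is being asserted. I expect the cleanest route is: rephrase $(5)$ via the chain $\partial_c f(q_0) = R_{q_0}f(q_0)$ and evaluate, or use analyticity — if the regular functions $R_{q_0}f(q)$ and $(1-\bar q_0 * q)^{-*}*(1-\bar q_0 * f(q))$ agree at one point, it does not yet mean they agree everywhere, so I should instead feed the single-point equality into the Schwarz-Pick strictness dichotomy directly. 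A secondary subtlety is that the equality clause of Theorem \ref{MainSchwarzPick} is stated for the \emph{pseudo-hyperbolic} inequality \eqref{MainSchwarzPickEq} and its derivative forms; I should verify that equality in \eqref{Req} at an interior point indeed triggers the ``$f$ is a regular M\"obius transformation'' conclusion rather than merely implying equality in a weaker sense — tracing back through Corollary \ref{schwarzp} and Theorem \ref{teo1}, equality in the $R_{q_0}$-inequality corresponds to $\mr_{q_0}^{-*}*\tilde f$ attaining modulus $1$ in the interior, which by the maximum modulus principle (Theorem \ref{maximum}) forces it to be a unit constant, exactly the M\"obius case. Once that link is secured, the cycle closes cleanly.
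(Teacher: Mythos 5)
Your reduction of $(5)$ to the equality case of \eqref{Req}, and thence to ``$f$ is a regular M\"obius transformation,'' matches the paper's technical lemma. But the last step of your $(5)\Rightarrow(1)$ fails: you claim that ``equality in \eqref{Req} at an interior point can only happen for the identity,'' whereas by the equality clause of Theorem \ref{MainSchwarzPick} \emph{every} regular M\"obius transformation realizes equality in \eqref{Req} at \emph{every} point of $\B$, so neither the strictness statement nor the remark after Theorem \ref{fixedpoints} can separate the identity from the other regular M\"obius transformations fixing $q_0$. What is needed --- and what the paper supplies --- is the stronger information that the two regular \emph{functions} in $(5)$ (not merely the two moduli in \eqref{Req}) agree at a point: writing $Q(q)=(1-\bar q_0*q)^{-*}*(1-\bar q_0*f(q))$, an explicit computation gives
$$R_{q_0}f(q)-Q(q)=(q-q_0)^{-*}*(1-q\bar q_0)^{-*}*(1-|q_0|^2)*[f(q)-q],$$
so a zero of this difference forces $f(q)-q=(q-q_0)*(q-q_1)*g(q)$ for some regular $g$ and some $q_1\in\B$; comparing with the factorization $f(q)-q=(1-q\bar a)^{-*}*(q-\alpha)*(q-\beta)\bar a$ with $|\alpha\beta|=1$ from the proof of Theorem \ref{fixedpoints} then forces $a=0$ and $f=id$. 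This computation is the missing idea, and without it the implication does not close.

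Two further problems. First, your announced cycle is broken: you never prove $(4)\Rightarrow(5)$ (nor any implication out of $(4)$), so $(4)$ is not linked back to the rest. The paper instead proves $(3)\Rightarrow(5)$ directly --- when $f(q_0)=q_0$ and $\partial_cf(q_0)=1$, both $R_{q_0}f$ and $Q$ evaluate to $1$ at $q=q_0$; note that, unlike \eqref{MainSchwarzPickEq}, condition $(5)$ and inequality \eqref{Req} are \emph{not} vacuous at $q=q_0$, so the degenerate case you worry about is actually the useful one --- and it handles $(4)$ separately: $\partial_sf(q_0)=(q_0-\bar q_0)^{-1}(q_0-f(\bar q_0))$ equals $1$ iff $\bar q_0$ is a second fixed point, which by Theorem \ref{fixedpoints} happens iff $f=id$. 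Second, your $(3)\Rightarrow(4)$ mis-evaluates the $*$-product: by Theorem \ref{quotients}, the left-hand side of \eqref{culderiv} equals $|\partial_cf(q_0)|\,|1-\overline{f(q_0)}f(\tilde q_0)|^{-1}$ for a point $\tilde q_0$ that in general only lies on the same sphere as $q_0$ and need not equal it, so $f(q_0)=q_0$ together with $\partial_cf(q_0)=1$ does not immediately yield equality in \eqref{culderiv}.
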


The new proof is based on a technical lemma.

\begin{lemma}
Let $f : \B \to \B$ be a regular function and let $q_0 \in \B$. If $R_{q_0}f(q)$ equals the quotient 
\begin{equation}\label{extremalquotient}
(1-\bar q_0*q)^{-*}*(1-\overline{f(q_0)}*f(q))
\end{equation}
at any point of $\B$, then $f$ is a M\"obius transformation of $\B$.
\end{lemma}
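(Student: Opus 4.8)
The plan is to reduce the hypothesis to the equality case of the Schwarz-Pick lemma (Theorem~\ref{MainSchwarzPick}), which forces $f$ to be a regular M\"obius transformation. First I would introduce, as in the proof of Theorem~\ref{MainSchwarzPick}, the auxiliary function
$$\tilde f(q) = (f(q)-f(q_0))*(1-\overline{f(q_0)}*f(q))^{-*},$$
which is a regular self-map of $\B$ with $\tilde f(q_0)=0$, and recall from that proof that $R_{q_0}\tilde f(q) = R_{q_0}f(q)*(1-\overline{f(q_0)}*f(q))^{-*}$, so that in particular $\tilde f(q) = (q-q_0)*R_{q_0}\tilde f(q) = (q-q_0)*R_{q_0}f(q)*(1-\overline{f(q_0)}*f(q))^{-*}$.

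Next I would unwind the hypothesis. Saying that $R_{q_0}f$ agrees with the quotient \eqref{extremalquotient} at some point $q_1\in\B$ means, after multiplying through by $(1-\bar q_0*q_1)$ on the left and $(1-\overline{f(q_0)}*f(q_1))^{-1}$-type factors on the right (using Theorem~\ref{quotients} to handle the $*$-products pointwise), precisely that
$$R_{q_0}f(q_1)*(1-\overline{f(q_0)}*f(q_1))^{-*} = (1-\bar q_0*q_1)^{-*},$$
i.e. that equality holds in \eqref{Req} at the point $q_1$ (the moduli of both sides being equal there, since they are literally equal quaternions). Equivalently, $|R_{q_0}\tilde f(q_1)| = |(1-\bar q_0 * q_1)^{-*}|$, which by Corollary~\ref{schwarzp} applied to $\tilde f$ forces the equality case: there exists $u\in\partial\B$ with $\tilde f(q) = \mr_{q_0}(q)\cdot u$ on all of $\B$. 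By the characterization recalled in Section~\ref{regularmoebius}, $\tilde f$ is then a regular M\"obius transformation of $\B$; and since $f = \tilde f .A$ for the Hermitian matrix $A = \begin{bmatrix} 1 & -\overline{f(q_0)} \\ -f(q_0) & 1\end{bmatrix}\in Sp(1,1)$ (the transformation inverting $q\mapsto (q-f(q_0))*(1-\overline{f(q_0)}*q)^{-*}$), Proposition~\ref{transform} gives that $f$ itself is a regular M\"obius transformation of $\B$.

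The one point requiring care is the passage from ``$R_{q_0}f(q_1)$ equals the quotient \eqref{extremalquotient} at $q_1$'' to ``equality holds in \eqref{Req} at $q_1$'': one must check that the regular-quotient manipulations are legitimate at $q_1$, i.e. that $q_1$ lies outside the relevant symmetrization zero-sets so that $(1-\bar q_0*q)^{-*}$ and $(1-\overline{f(q_0)}*f(q))^{-*}$ are defined and the identities of Theorem~\ref{quotients} apply. This is automatic since $|q_1|<1$, $|\bar q_0|<1$, $|f(q_1)|<1$, $|\overline{f(q_0)}|<1$ force $1-q\bar q_0$ and $1-f(q)\overline{f(q_0)}$ to be zero-free on $\B$. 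I expect this bookkeeping — matching the quotient hypothesis with the modulus-equality hypothesis of Corollary~\ref{schwarzp}, while correctly tracking which $*$-products turn into ordinary products at which arguments via Theorem~\ref{quotients} — to be the main (though purely technical) obstacle; the conceptual content is simply that hypothesis \eqref{extremalquotient} is the equality case of \eqref{Req} in disguise.
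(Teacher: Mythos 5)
Your argument is correct and takes essentially the same route as the paper: both reduce the hypothesis to the equality case of inequality \eqref{Req} at a single point (the rearrangement of the regular quotients being legitimate because $1-q\bar q_0$ and $1-\overline{f(q_0)}*f(q)$ are zero-free in $\B$, so by corollary \ref{productzeros} the two differences vanish at the same points), and then invoke the rigidity part of theorem \ref{MainSchwarzPick}. The paper's proof is simply a terser version of yours.
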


\begin{proof}
By theorem \ref{MainSchwarzPick},
$$|R_{q_0}f(q)*(1-\overline{f(q_0)}*f(q))^{-*}| \leq |(1-\bar q_0*q)^{-*}|$$
and $f$ is a regular M\"obius transformation if equality holds at any point of $\B$. This is true, in particular, if 
$$R_{q_0}f(q)*(1-\overline{f(q_0)}*f(q))^{-*} - (1-\bar q_0*q)^{-*}$$
vanishes at any point of $\B$. This is equivalent to the vanishing of 
$$R_{q_0}f(q)-(1-\bar q_0*q)^{-*}*(1-\overline{f(q_0)}*f(q))$$ at some $q \in \B$.
\end{proof}

\begin{proof}[Proof of theorem \ref{cartan}]
$(1)\Rightarrow (2) \Rightarrow (3)$: these implications are obvious.

\noindent $(3) \Rightarrow (5)$: we already know that $R_{q_0}f(q_0) = \partial_c f (q_0)$. Moreover if $f(q_0)=q_0$ then the quotient 
$$Q(q)=(1-q\bar q_0)^{-*}*(1-\bar q_0*f(q))$$
equals $1$ at $q_0$, thanks to the fact that $\bar q_0*f(q) = \bar q_0f(\bar q_0^{-1}q\bar q_0)$ (by theorem \ref{quotients}).

\noindent $(5)\Rightarrow (1)$:
in the case of a fixed point $q_0$, quotient \eqref{extremalquotient} equals the aforementioned $Q(q)$. According to the previous lemma, if $Q(q)$ equals $R_{q_0}f(q)$ at any point of $\B$ then $f$ is a regular M\"obius transformation. We are left with proving that if $f$ is a regular M\"obius transformation $f(q) = (1-q \bar a)^{-*}*(q-a) u$ and if
\begin{align*}
&R_{q_0}f(q) -Q(q) =\\
&\left[(q-q_0)^{-*}+(1-q\bar q_0)^{-*}\bar q_0\right]*f(q) - (q-q_0)^{-*} q_0 - (1-q\bar q_0)^{-*}=\\
&(q-q_0)^{-*}*(1-q\bar q_0)^{-*}*\left\{\left[1-q\bar q_0 + (q-q_0)\bar q_0\right]*f(q) - (1-q\bar q_0)q_0 -(q-q_0)\right\}=\\
&(q-q_0)^{-*}*(1-q\bar q_0)^{-*}*(1-|q_0|^2)*[f(q)-q]
\end{align*}
has a zero in $\B$ then $f$ is identity.
The latter condition is equivalent to the existence of a zero $q_1\in \B$ for 
\begin{align*}
(1-q\bar q_0)*\left[R_{q_0}f(q) -Q(q)\right] (1-|q_0|^2)^{-1} =(q-q_0)^{-*}*[f(q)-q],
\end{align*}
i.e., to the existence of a regular $g : \B \to \hh$ such that
$$f(q) -q= (q-q_0)*(q- q_1)*g(q).$$
We have already seen in the proof of theorem \ref{fixedpoints} that $f(q)-q = (1-q \bar a)^{-*} * (q-\alpha)*(q-\beta) \bar a$ where $\alpha, \beta$ cannot both lie in $\B$. These two facts are only compatible if $\bar a=0$ and $g \equiv 0$, that is, if $f = id$.

\noindent$(1)\Leftrightarrow(4)$: when $f(q_0)=q_0$ then the spherical derivative 
$$\partial_s f(q_0) = (q_0 - \bar q_0)^{-1}(q_0 - f(\bar q_0))$$ equals $1$ if, and only if, $\bar q_0$ is a fixed point of $f$, too. According to theorem \ref{fixedpoints}, the latter is equivalent to $f=id$.
\end{proof}

For the sake of completeness, we conclude this section identifying explicitly which regular M\"obius transformations fix a point $q_0 \in \B$. Let us denote
$$S^3 = \left\{
\begin{bmatrix}
v & 0 \\
0 & 1
\end{bmatrix} 
: v \in \partial \B \right\} \leq Sp(1,1)$$

\begin{proposition}
For each $q_0 \in \B$, the class of regular M\"obius transformations fixing $q_0$ is the orbit of the identity function under the right action of the subgroup 
\begin{equation}
C({q_0})\cdot S^3 \cdot C({q_0})^{-1}\leq Sp(1,1)
\end{equation}
with $C({q_0})=\begin{bmatrix}
1 & -\bar q_0 \\
-q_0 & 1
\end{bmatrix}$. In other words, $(1-q\bar a)^{-*}*(q-a) u$ fixes $q_0$ if, and only if,
\begin{eqnarray}
u &=& (1-q_0v\bar q_0)^{-1}(v-|q_0|^2)\\
a &=& q_0 (1-\bar v) (1-q_0\bar v\bar q_0)^{-1}
\end{eqnarray}
for some $v \in \partial \B$.
\end{proposition}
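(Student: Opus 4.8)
The plan is to exploit the conjugation action that identifies $\mathfrak{M}$ with the orbit of $id$ under $Sp(1,1)$, and to reduce the problem of describing the stabilizer of $q_0$ to the already-understood case $q_0=0$. First I would observe that the regular M\"obius transformations fixing the origin are exactly the rotations $q\mapsto qv$ with $v\in\partial\B$: indeed $(1-q\bar a)^{-*}*(q-a)u$ equals $q$ at $0$ iff $-au=0$, i.e.\ $a=0$, leaving $q\mapsto qu$; conversely each such rotation clearly fixes $0$. In matrix terms this is precisely the orbit of $id$ under the right action of $S^3=\left\{\begin{bmatrix}v&0\\0&1\end{bmatrix}:v\in\partial\B\right\}$, since $id.\begin{bmatrix}v&0\\0&1\end{bmatrix}=(q\cdot0+1)^{-*}*(q v+0)=qv$.

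Next I would pass from $0$ to an arbitrary $q_0\in\B$ by conjugating with the transformation $M_{q_0}=id.C(q_0)$, where $C(q_0)=\begin{bmatrix}1&-\bar q_0\\-q_0&1\end{bmatrix}\in Sp(1,1)$ (one checks $\overline{C(q_0)}^tHC(q_0)=(1-|q_0|^2)H$, so after rescaling by the scalar $N$-factor this lies in $Sp(1,1)$ up to the normal subgroup $N$, which acts trivially). The key point is that $M_{q_0}$ sends $0$ to $q_0$: from the explicit form $id.C(q_0)=(-q_0q+1)^{-*}*(q-\bar q_0)$, evaluating at $0$ gives $-\bar q_0$... so in fact I would choose the conjugating element so that $q_0\mapsto 0$, i.e.\ take $g_{q_0}:=id.C(-q_0)$ or equivalently the regular M\"obius transformation $(1-q\bar q_0)^{-*}*(q-q_0)$, which by the explicit description in the excerpt indeed carries $q_0$ to $0$. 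Then $h$ fixes $q_0$ iff $g_{q_0}\circ h\circ g_{q_0}^{-1}$ fixes $0$; since the right action of $Sp(1,1)$ realizes composition of regular M\"obius transformations (and $\mathfrak{M}$ is a group under composition, being the bijective elements of $\reg$), this conjugation corresponds on the matrix side to $A\mapsto C(-q_0)^{-1}AC(-q_0)$, up to $N$. Hence the stabilizer of $q_0$ is $C(-q_0)\cdot S^3\cdot C(-q_0)^{-1}$, which I would rewrite as $C(q_0)\cdot S^3\cdot C(q_0)^{-1}$ after absorbing sign changes into the scalar factor $N$ (noting $C(-q_0)$ and $C(q_0)$ are related by a rescaling that lies in $N$ only after conjugation — this bookkeeping is where I must be careful about which of $q_0,-q_0$ appears).

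Finally, to obtain the explicit formulas for $u$ and $a$, I would carry out the matrix product
$$
C(q_0)\begin{bmatrix}v&0\\0&1\end{bmatrix}C(q_0)^{-1}
=\frac{1}{1-|q_0|^2}\begin{bmatrix}1&-\bar q_0\\-q_0&1\end{bmatrix}\begin{bmatrix}v&0\\0&1\end{bmatrix}\begin{bmatrix}1&\bar q_0\\q_0&1\end{bmatrix},
$$
read off the entries $\begin{bmatrix}a'&c'\\b'&d'\end{bmatrix}$, and then convert back to the normal form $(1-q\bar a)^{-*}*(q-a)u$ by using the uniqueness statement of the theorem characterizing $\mathfrak{M}$: namely $u=(b'){d'}^{-1}$ up to the right sense and $a=-{d'}^{-1}\!\cdot(\text{the translation part})$, matching against $\f_{A'}(q)=(qc'+d')^{-*}*(qa'+b')$. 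Computing $d'=(1-q_0 v\bar q_0)/(1-|q_0|^2)$ and the appropriate numerator entries yields $u=(1-q_0v\bar q_0)^{-1}(v-|q_0|^2)$ and $a=q_0(1-\bar v)(1-q_0\bar v\bar q_0)^{-1}$; one then verifies $|u|=1$ directly (since $|v|=1$ forces $|v-|q_0|^2|=|1-q_0\bar v\bar q_0|\cdot\frac{?}{?}$ — a short quaternionic modulus computation) and $|a|<1$.

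The main obstacle I expect is the noncommutative bookkeeping in the last step: the passage from the matrix $\f_{A'}$ back to the canonical form $(1-q\bar a)^{-*}*(q-a)u$ is not a literal ``divide by $d'$'' because quaternionic matrix entries do not commute, so one must either invoke the uniqueness part of the classification theorem to match coefficients, or verify directly that the candidate $(u,a)$ above produces a transformation fixing $q_0$ and then appeal to uniqueness. The second route is cleaner: substitute the proposed $a,u$ into $(1-q_0\bar a)^{-*}*(q_0-a)u$ — i.e.\ $(1-q_0\bar a)^{-1}(q_0-a)u$ since $q_0$ is a single point — and simplify to $q_0$, which is a finite quaternionic identity verification rather than a structural argument.
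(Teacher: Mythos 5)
Your overall strategy (identify the stabilizer of $0$ with $S^3$, conjugate by $C(q_0)$, then read off $u$ and $a$ from the matrix product) is the same as the paper's, but the way you justify the conjugation step contains a genuine error. You assert that $\mathfrak{M}$ is a group under composition and that the right action of $Sp(1,1)$ "realizes composition", so that $h$ fixes $q_0$ iff $g_{q_0}\circ h\circ g_{q_0}^{-1}$ fixes $0$. In the slice regular setting this is false: the pointwise composition of two regular functions is in general not regular, $\mathfrak{M}$ is not closed under $\circ$, and $f.A$ is a formal substitution, not $\f_A\circ f$. The correct route (the paper's) avoids composition entirely: if $f(q_0)=q_0$ then $f.C(q_0)=(1-f\bar q_0)^{-*}*(f-q_0)=(f-q_0)*(1-\bar q_0*f)^{-*}$ by proposition \ref{transform}, and this vanishes at $q_0$ because a $*$-product whose left factor vanishes at $q_0$ vanishes there (theorem \ref{quotients}); conversely the classification theorem says the regular M\"obius transformations sending $q_0$ to $0$ are exactly $id.C(q_0).\bigl[\begin{smallmatrix}v&0\\0&1\end{smallmatrix}\bigr]$, and then the group property of the \emph{formal} right action gives $f=id.\bigl(C(q_0)D_vC(q_0)^{-1}\bigr)$. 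Relatedly, your $C(q_0)$-versus-$C(-q_0)$ worry stems from misreading the paper's convention $A=\bigl[\begin{smallmatrix}a&c\\b&d\end{smallmatrix}\bigr]$, $\f_A(q)=(qc+d)^{-*}*(qa+b)$: with this convention $id.C(q_0)=(1-q\bar q_0)^{-*}*(q-q_0)$ already sends $q_0$ to $0$ and no sign flip is needed. Your proposed fix --- passing from $C(-q_0)\cdot S^3\cdot C(-q_0)^{-1}$ to $C(q_0)\cdot S^3\cdot C(q_0)^{-1}$ by "absorbing signs into $N$" --- cannot work: since $C(-q_0)=(1-|q_0|^2)\,C(q_0)^{-1}$, the first subgroup equals $C(q_0)^{-1}S^3C(q_0)$, which is a genuinely different subgroup of $Sp(1,1)$ for $q_0\neq 0$.

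The second gap is in your "cleaner" final verification. Evaluating $(1-q\bar a)^{-*}*(q-a)u$ at the point $q_0$ does \emph{not} give $(1-q_0\bar a)^{-1}(q_0-a)u$: by theorem \ref{quotients} a left regular quotient $f^{-*}*g$ evaluates at $q$ as $f(T_f(q))^{-1}g(T_f(q))$ with $T_f(q)=f^c(q)^{-1}qf^c(q)$, so here one gets $(1-\tilde q_0\bar a)^{-1}(\tilde q_0-a)u$ with $\tilde q_0=(1-q_0a)^{-1}q_0(1-q_0a)\neq q_0$ in general. Hence "the regular M\"obius transformation with data $(a,u)$ fixes $q_0$" is not equivalent to the classical identity you propose to check, and the direct-substitution shortcut fails. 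The matching of coefficients must instead be done as in the paper, by forcing the two matrices $C(q_0)\bigl[\begin{smallmatrix}v&0\\0&1\end{smallmatrix}\bigr]C(q_0)^{-1}$ and $\bigl[\begin{smallmatrix}1&-\bar a\\-a&1\end{smallmatrix}\bigr]\bigl[\begin{smallmatrix}u&0\\0&1\end{smallmatrix}\bigr]$ to induce the same regular fractional transformation, i.e.\ to agree in $PSL(2,\hh)=GL(2,\hh)/N$, using the uniqueness of the normal form $(1-q\bar a)^{-*}*(q-a)u$.
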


\begin{proof}
If $f$ is a regular M\"obius transformation fixing $q_0$ then $\tilde f = f\,.\,C(q_0)$ is a regular M\"obius transformation mapping $q_0$ to $0$. Hence
$\tilde f(q) = (1-q\bar q_0)^{-*}*(q-q_0) v$
for some $v \in \partial \B$. In other words,
$$f\,.\,C(q_0) = id\,.\, C(q_0)\, . \begin{bmatrix}
v & 0 \\
0 & 1
\end{bmatrix}$$
or, equivalently, $f = id\,.\,A$ for some $A \in C({q_0})\cdot S^3 \cdot C({q_0})^{-1}$.
The final statement follows by direct computation, forcing
$$\begin{bmatrix}
1 & -\bar q_0 \\
-q_0 & 1
\end{bmatrix}
\begin{bmatrix}
v & 0 \\
0 & 1
\end{bmatrix}
\begin{bmatrix}
1 & -\bar q_0 \\
-q_0 & 1
\end{bmatrix}^{-1}$$
and
$$\begin{bmatrix}
1 & -\bar a \\
-a & 1
\end{bmatrix}
\begin{bmatrix}
u & 0 \\
0 & 1
\end{bmatrix}$$
to induce the same transformation.
\end{proof}


\section{Higher order estimates}\label{sectionhigher}

As in the complex case, the quaternionic Schwarz-Pick lemma admits higher order generalizations. Let us denote the $n$th Cullen derivative of a regular function $f$ as $\partial_c^nf$. Let $(q-q_0)^{*n} = (q-q_0)*\ldots*(q-q_0)$ denote the $*$-product of $n$ copies of $q \mapsto q-q_0$. The next theorem was proven in \cite{powerseries}.

\begin{theorem}
Let $\Omega$ be a domain in $\hh$. A function $f : \Omega \to \hh$ is regular if and only if, for each $q_0 \in \Omega$ 
\begin{equation}\label{regularseries}
f(q) = \sum_{n \in \nn} (q-q_0)^{*n} \frac{\partial_c^nf(q_0)}{n!}
\end{equation}
in a ball centered at $q_0$ with respect to the non-Euclidean distance
\begin{equation}
\sigma(q,p) = \left\{
\begin{array}{ll}
|q-p| & \mathrm{if\ } p,q \mathrm{\ lie\ on\ the\ same\ complex\ plane\ } L_I\\
\omega(q,p) &  \mathrm{otherwise}
\end{array}
\right.
\end{equation}
where
\begin{equation}
\omega(q,p) = \sqrt{\left[Re(q)-Re(p)\right]^2 + \left[|Im(q)| + |Im(p)|\right]^2}. 
\end{equation}
\end{theorem}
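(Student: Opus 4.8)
The plan is to reduce the statement to one-variable complex analysis on the slices $L_I$, after first establishing the basic theory of \emph{regular power series centered at $q_0$}, i.e.\ of series $\sum_{n\in\nn}(q-q_0)^{*n}a_n$ with $a_n\in\hh$. Writing $\Sigma(q_0,R)=\{q\in\hh:\sigma(q,q_0)<R\}$, the three auxiliary facts I would prove are: $(i)$ such a series converges absolutely, and uniformly on compact subsets of $\Sigma(q_0,R)$, as soon as $1/R=\limsup_n|a_n|^{1/n}$; $(ii)$ its sum is regular on $\Sigma(q_0,R)$; $(iii)$ if $f(q)=\sum_n(q-q_0)^{*n}a_n$ on $\Sigma(q_0,R)$, then necessarily $a_n=\partial_c^nf(q_0)/n!$ for every $n$.

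The heart of $(i)$ is the bound $|(q-q_0)^{*n}(q)|\le\sigma(q,q_0)^n$ for all $q$ and $n$. I would obtain it from Theorem~\ref{quotients}: setting $l(q)=q-q_0$ and iterating $l*h(q)=l(q)\,h(l(q)^{-1}ql(q))$ exhibits $(q-q_0)^{*n}(q)$ as a product of $n$ factors of the form $p-q_0$, each $p$ lying on the $2$-sphere $S_q=Re(q)+|Im(q)|\,\s$; hence $|(q-q_0)^{*n}(q)|\le\bigl(\max_{p\in S_q}|p-q_0|\bigr)^n=\omega(q,q_0)^n$, the last equality being the elementary computation of the largest distance from $q_0$ to $S_q$. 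When $q$ and $q_0$ share a slice $L_I$ they commute, so $(q-q_0)^{*n}(q)=(q-q_0)^n$ and the modulus is exactly $|q-q_0|^n=\sigma(q,q_0)^n$; otherwise $\sigma(q,q_0)=\omega(q,q_0)$ and the displayed bound is the one claimed. A Cauchy--Hadamard argument then gives $(i)$. For $(ii)$, the partial sums are polynomials, hence regular, and on each slice $L_J$ they converge locally uniformly to an $\hh$-valued holomorphic function on $\Sigma(q_0,R)\cap L_J$ by Weierstrass' theorem, so the sum is regular. For $(iii)$, an induction on $n$ using the Leibniz rule~\eqref{leibnizc} together with $\partial_c(q-q_0)=1$ gives $\partial_c[(q-q_0)^{*n}]=n\,(q-q_0)^{*(n-1)}$; the differentiated series has the same radius, so term-by-term Cullen differentiation yields $\partial_c^kf(q)=\sum_{n\ge k}\frac{n!}{(n-k)!}(q-q_0)^{*(n-k)}a_n$, and evaluating at $q_0$, where $(q_0-q_0)^{*m}$ vanishes for $m\ge1$, leaves $\partial_c^kf(q_0)=k!\,a_k$.

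With this toolkit the ``if'' direction is quick: if $f$ coincides, near each point $p$ of $\Omega$, with a convergent regular power series, then for every $I\in\s$ and every $p\in\Omega_I$ the trace on $L_I$ of the corresponding $\sigma$-ball is a genuine disc $\{z\in L_I:|z-p|<R_p\}$, on which $f$ equals $\sum_n(z-p)^{*n}a_n=\sum_n(z-p)^na_n$ (since $p\in L_I$), a convergent $\hh$-valued power series in $z$; hence $f_I$ is holomorphic on $\Omega_I$ and $f$ is regular (and by $(iii)$ the coefficients are necessarily $\partial_c^nf(p)/n!$). For the ``only if'' direction, fix $q_0\in\Omega$ and choose $I_0\in\s$ with $q_0\in L_{I_0}$. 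Then $f_{I_0}$ is holomorphic near $q_0$, so $f_{I_0}(z)=\sum_n(z-q_0)^nc_n$ on a disc $\{z\in L_{I_0}:|z-q_0|<r_0\}\subseteq\Omega_{I_0}$; since $\bar\partial_{I_0}f_{I_0}=0$ forces $\partial_cf$ to restrict to the complex derivative $f_{I_0}'$ on $L_{I_0}$, iteration gives $c_n=\partial_c^nf(q_0)/n!$. By $(i)$--$(ii)$ the regular series $g(q)=\sum_n(q-q_0)^{*n}c_n$ is regular on $\Sigma(q_0,R)$ with $R\ge r_0$, and on $\Sigma(q_0,r_0)\cap L_{I_0}$ it equals $\sum_n(z-q_0)^nc_n=f_{I_0}=f$. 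Since $\sigma(q,q_0)\ge|q-q_0|$ always, every $\sigma$-ball sits inside the Euclidean ball of the same radius, so one can pick $\rho\in(0,r_0]$ with $\Sigma(q_0,\rho)\subseteq\Omega$; as $\rho\le r_0\le R$, both $f$ and $g$ are regular on $\Sigma(q_0,\rho)$ and agree on the slice disc $\Sigma(q_0,\rho)\cap L_{I_0}$, so the identity principle for regular functions yields $f\equiv g$ on $\Sigma(q_0,\rho)$ --- the announced expansion in a $\sigma$-ball about $q_0$.

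The main obstacle is the sharp description of the region of convergence, i.e.\ the estimate $|(q-q_0)^{*n}(q)|\le\sigma(q,q_0)^n$ and its sharpness: it is precisely the gap between $|q-q_0|^n$ on the slice through $q_0$ and the strictly larger $\omega(q,q_0)^n$ off that slice that forces the non-Euclidean metric $\sigma$, a Euclidean ball being too large off-slice. A second, more delicate point arises if one insists on the expansion being valid on the \emph{largest} $\sigma$-ball about $q_0$ contained in $\Omega$, rather than merely on some $\sigma$-ball: when that radius exceeds $|Im(q_0)|$ the ball genuinely meets every slice, and propagating $f\equiv g$ from the single disc $\Sigma(q_0,\rho)\cap L_{I_0}$ to the whole ball rests on the representation formula for regular functions, while pinning down the exact radius calls for comparing the $\sigma$-distance from $q_0$ to $\partial\Omega$ with ordinary distance inside slices.
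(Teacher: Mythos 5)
The paper does not prove this theorem: it is imported verbatim from the reference \cite{powerseries}, whose argument your proposal essentially reconstructs --- the decisive estimate $|(q-q_0)^{*n}(q)|\le\sigma(q,q_0)^n$ obtained by iterating the identity $f*g(q)=f(q)\,g(f(q)^{-1}qf(q))$ of Theorem~\ref{quotients} so that each factor is $p-q_0$ with $p$ on the sphere $Re(q)+|Im(q)|\s$, the resulting Abel/Cauchy--Hadamard theory of $*$-series on $\sigma$-balls, and the passage from the expansion of $f_{I_0}$ on the slice through $q_0$ to the whole $\sigma$-ball via the representation formula (needed only when the radius exceeds $|Im(q_0)|$, since otherwise the $\sigma$-ball is just a disc in $L_{I_0}$). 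Your reasoning is correct, and you rightly flag the two genuinely delicate points (sharpness of the off-slice bound $\omega$, and the propagation off the slice); no gaps.
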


Theorem \ref{MainSchwarzPick} extends to the next result.

\begin{theorem}
Let $f : \B \to \B$ be a regular function, let $q_0 \in \B$. If $\partial_c^mf(q_0)=0$ for $1\leq m\leq n-1$ then
\begin{equation}
|(f(q)-f(q_0))*(1-\overline{f(q_0)}*f(q))^{-*}| \leq |(q-q_0)^{*n}*(1-\bar q_0*q)^{-*n}|
\end{equation}
for $q \in \B$. Furthermore,
\begin{equation}
\left|\partial_c^{n}f*(1-\overline{f(q_0)}*f)^{-*}\right|_{|_{q_0}} \leq \frac{n!}{(1-|q_0|^2)^{n}}
\end{equation} 
\end{theorem}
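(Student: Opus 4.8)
The plan is to imitate the proof of Theorem \ref{MainSchwarzPick}, first reducing to a self-map of $\B$ that sends $q_0$ to $0$ and whose first $n-1$ Cullen derivatives vanish at $q_0$, and then proving a higher-order version of Theorem \ref{teo1} and Corollary \ref{schwarzp}. Concretely, set
$$\tilde f(q) = (f(q)-f(q_0))*(1-\overline{f(q_0)}*f(q))^{-*},$$
which by Proposition \ref{transform} is a regular self-map of $\B$ with $\tilde f(q_0)=0$. Writing $\tilde f = (f(q)-f(q_0))*g$ with $g(q)=(1-\overline{f(q_0)}*f(q))^{-*}$ and iterating the Leibniz rule \eqref{leibnizc}, $\partial_c^m\tilde f$ becomes a sum of terms each carrying a factor $\partial_c^j(f-f(q_0))$ with $0\le j\le m$; for $m\le n-1$ each such factor vanishes at $q_0$ (for $j=0$ because $f(q)-f(q_0)$ does, for $1\le j\le m$ by hypothesis), so by the first part of Theorem \ref{quotients} the whole term vanishes at $q_0$. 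Hence $\partial_c^m\tilde f(q_0)=0$ for $1\le m\le n-1$, and the same bookkeeping applied to $\partial_c^n\tilde f$ leaves only the top term, giving $\partial_c^n\tilde f(q_0)=\left[\partial_c^nf*(1-\overline{f(q_0)}*f)^{-*}\right]_{|_{q_0}}$.

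The core step is then the following lemma: \emph{if $h_0:\B\to\B$ is regular with $h_0(q_0)=0$ and $\partial_c^mh_0(q_0)=0$ for $1\le m\le n-1$, then $|h_0|\le|\mr_{q_0}^{*n}|$ on $\B$ and $|\partial_c^nh_0(q_0)|\le n!/(1-|q_0|^2)^n$}, where $\mr_{q_0}^{*n}=(q-q_0)^{*n}*(1-q\bar q_0)^{-*n}$ (the two factors $*$-commute). First I would establish the factorization $h_0(q)=(q-q_0)^{*n}*G(q)$ with $G$ regular on \emph{all} of $\B$: since $h_0(q_0)=0$ we have $h_0=(q-q_0)*R_{q_0}h_0$ with $R_{q_0}h_0$ regular on $\B$, and \eqref{leibnizc} together with Theorem \ref{quotients} gives $\partial_c^mh_0(q_0)=m\,\partial_c^{m-1}(R_{q_0}h_0)(q_0)$, so $R_{q_0}h_0$ inherits the hypotheses with $n$ replaced by $n-1$; induction on $n$ yields the claim (equivalently, $q_0$ is a $*$-zero of $h_0$ of multiplicity at least $n$). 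Consequently $\eta:=(\mr_{q_0}^{*n})^{-*}*h_0=(1-q\bar q_0)^{*n}*G$ is regular on $\B$, and by Theorem \ref{quotients} $|\eta|\le|(\mr_{q_0}^{*n})^{-*}|$ wherever the latter is defined, since $|h_0|<1$. Because $|\mr_{q_0}(q)|\to1$ as $|q|\to1^-$, also $|(\mr_{q_0}^{*n})^{-*}(q)|\to1$ (again via Theorem \ref{quotients}, conjugation preserving moduli), so the maximum modulus principle \ref{maximum} forces $|\eta|\le1$ on $\B$, exactly as in the proof of Theorem \ref{teo1}; then Lemma \ref{modulusproduct} applied to $h_0=\mr_{q_0}^{*n}*\eta$ gives $|h_0|\le|\mr_{q_0}^{*n}|$. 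Evaluating at $q_0$, where $1-q\bar q_0=1-|q_0|^2$ is real so $*$-products collapse to pointwise products, gives $\eta(q_0)=(1-|q_0|^2)^nG(q_0)$, while matching coefficients in the expansion \eqref{regularseries} gives $\partial_c^nh_0(q_0)=n!\,G(q_0)$; hence $|\partial_c^nh_0(q_0)|=n!(1-|q_0|^2)^{-n}|\eta(q_0)|\le n!(1-|q_0|^2)^{-n}$.

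With the lemma in hand the theorem follows by applying it to $\tilde f$: the bound $|\tilde f|\le|\mr_{q_0}^{*n}|=|(q-q_0)^{*n}*(1-\bar q_0*q)^{-*n}|$ is precisely the first inequality (recall $\bar q_0*q=q\bar q_0$), and $|\partial_c^n\tilde f(q_0)|\le n!/(1-|q_0|^2)^n$ together with the identity $\partial_c^n\tilde f(q_0)=\left[\partial_c^nf*(1-\overline{f(q_0)}*f)^{-*}\right]_{|_{q_0}}$ from the first paragraph gives the second. The main obstacle I anticipate is the factorization step: one must genuinely obtain $G$ regular on the whole ball $\B$, not merely inside the non-Euclidean ball around $q_0$ where the series \eqref{regularseries} converges, because it is regularity of $G$ across the sphere $S_{q_0}=x_0+y_0\s$ that makes $\eta$ regular on $\B$ and hence eligible for the maximum modulus principle; the inductive argument through $R_{q_0}$ avoids any ad hoc extension across $S_{q_0}$, but it hinges on the correct propagation of the vanishing hypotheses, which is the computation that must be carried out carefully. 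Everything else is a direct transcription of the first-order arguments of Section \ref{schwarzpick}.
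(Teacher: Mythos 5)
Your proposal is correct and follows essentially the same route as the paper: the same auxiliary map $\tilde f$, the same iterated Leibniz computation showing $\partial_c^m\tilde f(q_0)=0$ for $m\le n-1$ and $\partial_c^n\tilde f(q_0)=[\partial_c^nf*(1-\overline{f(q_0)}*f)^{-*}]_{|_{q_0}}$, the factorization $\tilde f=(q-q_0)^{*n}*G$, and the maximum-modulus argument applied to $(\mr_{q_0}^{*n})^{-*}*\tilde f$ as in Theorem \ref{teo1}. The only difference is that you spell out (via induction on $R_{q_0}$) the step the paper compresses into ``$\partial_c^m\tilde f(q_0)=0$ for $0\le m\le n-1$, i.e., $\tilde f=(q-q_0)^{*n}*g$ with $g$ regular,'' which is a welcome clarification rather than a deviation.
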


We point out that $\frac{n!}{(1-|q_0|^2)^{n}}$ is the $n$th Cullen derivative of 
$$(q-q_0)^{*n}*(1-\bar q_0*q)^{-*n} = \mr_{q_0}^{*n}(q).$$

\begin{proof}
If $\partial_c^mf(q_0)=0$ for $1\leq m\leq n-1$ then setting $q_1 = f(q_0)$ and $\tilde f = (f-q_1)*(1-\bar q_1*f)^{-*}$ defines a regular $\tilde f : \B \to \B$ with 
$$\partial_c^m\tilde f = \sum_{k=0}^{m-1} \partial_c^{m-k}f*\partial_c^k(1-\bar q_1*f)^{-*} \binom{m}{k} + (f-q_1)*\partial_c^m(1-\bar q_1*f)^{-*}.$$
Hence, $\partial_c^m \tilde f(q_0)=0$ for $0\leq m\leq n-1$, i.e., $\tilde f(q) = (q-q_0)^{*n} * g(q)$ for some regular $g : \B \to \hh$. Reasoning as in theorem \ref{teo1}, we prove that $\mr_{q_0}^{-*n}*\tilde f(q)$ is a regular function $\B \to \B$ and derive that for all $q \in \B$
$$|\tilde f(q)| \leq \left|\mr_{q_0}^{*n}(q)\right|$$
and 
$$|(q-q_0)^{-*n}*\tilde f(q)| \leq | (1-\bar q_0*q)^{-*n}|.$$
The latter implies
$$|\partial_c^{n}\tilde f(q_0) | \leq \frac{n!}{(1-|q_0|^2)^{n}}$$
and observing
$$\partial_c^{n}\tilde f(q_0) = \left[\partial_c^{n}f*(1-\bar q_1*f)^{-*}\right]_{|_{q_0}}$$
completes the proof.
\end{proof}

Finally, we generalize theorem \ref{MainSchwarzPick} in a different direction. We recall that the spherical derivative $\partial_sf$ is constant on each sphere $x_0+y_0\s$ and that it is not regular unless it is constant. Hence, iterated spherical derivation is meaningless. However, it makes sense to iterate the operator $R_{q_0}$ defined by formula \eqref{R}. This led in \cite{expansion} to the following result, where we use the notation
$$U(x_0+y_0\s,r) = \{q \in \hh \ |\  |(q-x_0)^2+y_0^2| < r^2\}$$
for all $x_0,y_0 \in \rr$, $r>0$, and we denote the composition of $R_{\bar q_0}$ and $R_{q_0}$ by juxtaposition and the $n$th iterate of $R_{\bar q_0}R_{q_0}$ by $(R_{\bar q_0}R_{q_0})^n$.

\begin{theorem}
Let $f$ be a regular function on $\Omega=B(0,R)$, and let  $U(x_0+y_0\s,r) \subseteq \Omega$. Then for each $q_0 \in x_0+y_0 \s$ there exists $\{A_n\}_{n \in \nn} \subset \hh$ such that
\begin{equation}\label{expansion}
f(q) = \sum_{n \in \nn} [(q-x_0)^2+y_0^2]^n [A_{2n}+(q-q_0) A_{2n+1}]
\end{equation}
for all $q \in U(x_0+y_0\s,r)$. 
Namely, $A_{2n} = (R_{\bar q_0}R_{q_0})^nf(q_0)$ and $A_{2n+1}= R_{q_0}(R_{\bar q_0}R_{q_0})^nf(\bar q_0)$ for all $n \in \nn$.
\end{theorem}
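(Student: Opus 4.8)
The plan is to prove the expansion \eqref{expansion} first on a single complex slice by an elementary one‑variable argument, then to propagate it to all of $U=U(x_0+y_0\s,r)$ via the representation formula, and finally to identify the coefficients $A_n$ by peeling the series off with the operators $R_{q_0}$ and $R_{\bar q_0}$. For the first step I would fix $I\in\s$ with $q_0=x_0+Iy_0\in L_I$ and pass to the variable $w=z-x_0$ on $L_I$, so that $z-q_0=w-Iy_0$ and $(z-x_0)^2+y_0^2=w^2+y_0^2$. The set $D=\{w\in L_I\ |\ |w^2+y_0^2|<r^2\}$ equals $U\cap L_I$ translated by $-x_0$; since $U\subseteq\Omega=B(0,R)$ we have $D+x_0\subseteq\Omega_I$, and as $D$ is invariant under $w\mapsto-w$ the function $h(w)=f_I(x_0+w)$ is holomorphic on the \emph{symmetric} domain $D$. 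Splitting $h=h_e+h_o$ into its even and odd parts, there are unique $\phi,\psi$ with $h_e(w)=\phi(w^2)$ and $h_o(w)=w\,\psi(w^2)$; inspecting the power expansions at $w=0$ shows $\phi$ and $\psi$ to be holomorphic on the disk $E=\{u\in L_I\ |\ |u+y_0^2|<r^2\}$, which is exactly $w^2(D)$. Expanding $\phi$ and $\psi$ in Taylor series about the centre $u=-y_0^2$ of $E$ and substituting $u=w^2$ gives
\[
h(w)=\sum_{n\in\nn}(w^2+y_0^2)^n\bigl(P_n+w\,Q_n\bigr)
\]
on $D$; rewriting $w\,Q_n=(w-Iy_0)Q_n+Iy_0Q_n$ and putting $A_{2n}=P_n+Iy_0Q_n$, $A_{2n+1}=Q_n$ yields \eqref{expansion} for $f_I$ on $U\cap L_I$.

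For the propagation, set $\Delta(q)=(q-x_0)^2+y_0^2$ and note that $\Delta$ has real coefficients, so $\Delta*g=\Delta g$ pointwise for every regular $g$ and $\Delta=(q-q_0)*(q-\bar q_0)$; in particular every summand $\Delta(q)^n[A_{2n}+(q-q_0)A_{2n+1}]$ is regular on $\hh$. On a compact $K\subseteq U$ one has $\sup_K|\Delta|=\varrho^2<r^2$, and the Cauchy estimates for $\phi$ and $\psi$ on $E$ give $\sum_n\varrho^{2n}(|A_{2n}|+|A_{2n+1}|)<\infty$; hence the series \eqref{expansion} converges normally on compact subsets of $U$ to a regular function $\tilde f$ (a uniform‑on‑compacta limit of regular functions on the axially symmetric domain $U$ is regular). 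By the previous step $\tilde f$ and $f$ agree on $U\cap L_I$, and since $U$ is axially symmetric and both functions are regular, the representation formula reconstructs the value of each of them at every $x+Jy\in U$ from their common values at $x+Iy$ and $x-Iy$ (which lie in $U\cap L_I$ by axial symmetry); therefore $\tilde f=f$ on all of $U$.

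It remains to identify the $A_n$. Since $\Delta(q_0)=(Iy_0)^2+y_0^2=0$, evaluating \eqref{expansion} at $q_0$ gives $f(q_0)=A_0$. Applying $(q-q_0)^{-*}*(\,\cdot\,)$ to $f(q)-f(q_0)$ and using that $\Delta$ is central and $(q-q_0)^{-*}*(q-q_0)*(q-\bar q_0)=q-\bar q_0$, one finds
\[
R_{q_0}f(q)=A_1+(q-\bar q_0)*\sum_{m\in\nn}\Delta(q)^m\bigl[A_{2m+2}+(q-q_0)A_{2m+3}\bigr],
\]
whence $R_{q_0}f(\bar q_0)=A_1$ by Theorem \ref{quotients}, while $R_{\bar q_0}R_{q_0}f$ is a series of exactly the same shape with all indices shifted by $2$. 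Iterating this computation yields $(R_{\bar q_0}R_{q_0})^nf(q_0)=A_{2n}$ and $R_{q_0}(R_{\bar q_0}R_{q_0})^nf(\bar q_0)=A_{2n+1}$ for every $n$, which is the asserted formula. (Equivalently, one can run the same peeling directly on $f$ to get the finite expansion $f=\sum_{n=0}^{N-1}\Delta^n[A_{2n}+(q-q_0)A_{2n+1}]+\Delta^N(R_{\bar q_0}R_{q_0})^Nf$ on $B(0,R)$ and let $N\to\infty$ using $|\Delta|<r^2$ on compacta of $U$.)

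I expect the only genuinely delicate point to be the first step. The even/odd decomposition has to be carried out on the symmetric domain $D$ itself — the ambient slice disk $\Omega_I$ is in general not symmetric about $x_0$, and $D$ may even be disconnected — and one must verify that $\phi$ and $\psi$ descend to functions holomorphic across the branch point $w=0$ of $w\mapsto w^2$, so that their radii of convergence about $-y_0^2$ are no smaller than $r^2$; this is precisely where the hypothesis $U\subseteq\Omega$ enters in its sharp form. Everything after that is routine algebra of the $*$-product together with a standard appeal to the representation formula.
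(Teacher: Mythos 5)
This theorem is quoted from \cite{expansion}; the present paper contains no proof of it, so there is nothing internal to compare your argument against. On its own merits your proof is correct and complete in outline. The three-step structure --- (i) even/odd decomposition of $f_I$ on the symmetric set $D$, giving $h_e=\phi(w^2)$ and $h_o=w\,\psi(w^2)$ with $\phi,\psi$ holomorphic on the full disk $E$ of radius $r^2$ centred at $-y_0^2$, hence Taylor-expandable there; (ii) propagation from the slice $L_I$ to all of the circular set $U$ by the representation formula, using that each summand is a polynomial (so the partial sums satisfy the formula) and that the Cauchy estimates on $\phi,\psi$ give normal convergence on compacta where $|\Delta|\le\varrho^2<r^2$; (iii) identification of the coefficients by peeling with $R_{q_0}$ and $R_{\bar q_0}$, using $\Delta=(q-q_0)*(q-\bar q_0)$ with real coefficients and $\Delta(q_0)=\Delta(\bar q_0)=0$ --- is sound, and you correctly flag the only delicate points yourself: the decomposition must be performed on $D$ (not on $\Omega_I$), componentwise after splitting $f_I=F_1+F_2J$, and $\phi,\psi$ must be checked holomorphic across $u=0$ when $0\in E$ and via a branch of the square root on the simply connected disk $E$ when $0\notin E$. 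Two small points deserve one extra line each if you write this up: in step (iii), the identity $R_{q_0}f=G$ where $f-f(q_0)=(q-q_0)*G$ holds a priori only off the sphere $x_0+y_0\s$ (the zero set of $(q-q_0)^s$) and is extended to all of $U$ by continuity of the two regular functions; and your parenthetical alternative (finite peeling plus a remainder $\Delta^N(R_{\bar q_0}R_{q_0})^Nf\to0$) is \emph{not} immediate from $|\Delta|<r^2$ alone, since it also requires a growth bound on $(R_{\bar q_0}R_{q_0})^Nf$ --- keep the main argument, which does not need this.
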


We are now ready for the announced higher order estimates. We recall that $g^s$ denotes the function obtained from $g$ by symmetrization (see definition \ref{conjugate}).

\begin{theorem}
Let $f : \B \to \B$ be a regular function and let $q_0 \in \B$. If the coefficients $A_m$ of the expansion \eqref{expansion} vanish for $1\leq m\leq 2n-1$ then
\begin{eqnarray}
|(f(q)-f(q_0))*(1-\overline{f(q_0)}*f(q))^{-*}| &\leq& |\mr_{q_0}^s(q)|^n\label{pari}\\
|(R_{\bar q_0}R_{q_0})^nf(q) *(1-\overline{f(q_0)}*f(q))^{-*}| &\leq& |(1-qx_0)^2+(qy_0)^2|^{-n}\nonumber
\end{eqnarray}
for $q \in \B$. If $A_{2n}=0$ as well then
\begin{eqnarray}
|(f(q)-f(q_0))*(1-\overline{f(q_0)}*f(q))^{-*}| &\leq& |\mr_{q_0}^s(q)|^n |\mr_{q_0}(q)| \label{dispari}\\
|R_{q_0}(R_{\bar q_0}R_{q_0})^nf(q) *(1-\overline{f(q_0)}*f(q))^{-*}| &\leq&|(1-qx_0)^2+(qy_0)^2|^{-n}\cdot|(1-q \bar q_0)^{-*}|\nonumber
\end{eqnarray}
\end{theorem}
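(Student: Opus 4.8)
The plan is to mimic the proof of the previous higher-order theorem, replacing the role of $(q-q_0)^{*n}$ (governed by Cullen derivatives) with the factor $\mr_{q_0}^s(q)^n = [(q-x_0)^2+y_0^2]^n$ (governed by the iterated operators $(R_{\bar q_0}R_{q_0})^n$ via the expansion \eqref{expansion}). First I would set $q_1 = f(q_0)$ and $\tilde f(q) = (f(q)-q_1)*(1-\bar q_1*f(q))^{-*}$, which by proposition \ref{transform} is a regular self-map of $\B$ vanishing at $q_0$ (by theorem \ref{quotients}, since $f(q)-q_1$ vanishes there). The key algebraic point is that the hypothesis $A_m = 0$ for $1\le m\le 2n-1$ for the expansion of $f$ at $q_0$ translates, via a Leibniz-type computation for $R_{q_0}$ and $R_{\bar q_0}$ applied to the product defining $\tilde f$, into the corresponding vanishing of the first $2n-1$ coefficients $\tilde A_m$ of the expansion \eqref{expansion} of $\tilde f$ — using crucially that $1-\bar q_1 f(q_0) = 1-|q_1|^2$ is real, so that the auxiliary factor $(1-\bar q_1*f(q))^{-*}$ contributes nothing to the low-order coefficients at $q_0$. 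Consequently $\tilde f(q) = [(q-x_0)^2+y_0^2]^n\, g(q) = \mr_{q_0}^s(q)^n g(q)$ for some regular $g:\B\to\hh$, and if moreover $A_{2n}=0$ then also $\tilde A_{2n}=0$, forcing $g(q) = (q-q_0)*g_1(q)$, i.e. $\tilde f(q) = \mr_{q_0}^s(q)^n (q-q_0) g_1(q)$.

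Next, reasoning exactly as in theorem \ref{teo1}, I would show that $\mr_{q_0}^{-s\,n}*\tilde f(q) = (1-q\bar q_0)^{*n}*(1-q\bar q_0)^{c*n}*\cdots$ — more precisely, that dividing $\tilde f$ by the regular function $\mr_{q_0}^s(q)^n$ (whose symmetrized zero set is the single sphere $S_{q_0}$) yields a function regular on all of $\B$, and that its modulus is bounded using the maximum modulus principle \ref{maximum}: since $\mr_{q_0}^s = M_{q_0}^s$ where $M_{q_0}$ is the classical M\"obius transformation sending $\B\to\B$ and $\partial\B\to\partial\B$, for every $\varepsilon>0$ one has $1\le |\mr_{q_0}^s(q)|^{-n}\le 1+\varepsilon$ for $|q|$ close enough to $1$, so $\max_{|q|=r}|\mr_{q_0}^{-sn}*\tilde f|$ is eventually bounded by $1+\varepsilon$, whence $|\mr_{q_0}^{-sn}*\tilde f|\le 1$ on $\B$. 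Applying lemma \ref{modulusproduct} then gives $|\tilde f(q)|\le |\mr_{q_0}^s(q)|^n$, which is inequality \eqref{pari}. Since $\tilde f(q) = \mr_{q_0}^s(q)^n g(q)$ and $A_{2n}(\tilde f) = (R_{\bar q_0}R_{q_0})^n\tilde f(q_0) = g(q_0)$ (up to the invertible real scalar coming from the first $n$ factors), one more application of lemma \ref{modulusproduct} — dividing out $(1-q\bar q_0)^{-*}$-type factors, which are regular and zero-free in $\B$ — yields the second inequality, and the evaluation at $q_0$ of the expansion coefficient of the auxiliary product $(1-\bar q_1*f)^{-*}$ being trivial at low order gives $(R_{\bar q_0}R_{q_0})^n\tilde f(q_0) = [1-\overline{f^s(q_0)}]^{-n}(R_{\bar q_0}R_{q_0})^nf(q_0)$-type identities, exactly as $\partial_s\tilde f(q_0)$ was computed in the proof of theorem \ref{MainSchwarzPick}.

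The second pair of inequalities \eqref{dispari} follows along the same lines once $A_{2n}=0$: then $\tilde f(q) = \mr_{q_0}^s(q)^n\,\mr_{q_0}(q)\,h(q)$ for regular $h:\B\to\hh$ (here using that $\mr_{q_0}(q) = (q-q_0)*(1-q\bar q_0)^{-*}$ absorbs the extra $(q-q_0)$ factor), and dividing $\tilde f$ by $\mr_{q_0}^s(q)^n\mr_{q_0}(q)$ again produces a function regular on $\B$ whose modulus, by the same maximum-modulus argument (now also using $|\mr_{q_0}(q)|\to 1$ as $|q|\to 1$), is $\le 1$; lemma \ref{modulusproduct} converts this into $|\tilde f(q)|\le |\mr_{q_0}^s(q)|^n|\mr_{q_0}(q)|$ and, after peeling off zero-free factors, into the stated bound on $R_{q_0}(R_{\bar q_0}R_{q_0})^nf(q)*(1-\overline{f(q_0)}*f(q))^{-*}$.

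I expect the main obstacle to be the first step: carefully justifying that the vanishing of the expansion coefficients $A_1,\dots,A_{2n-1}$ (resp. also $A_{2n}$) of $f$ at $q_0$ passes to $\tilde f$. Unlike the Cullen-derivative case, where \eqref{regularseries} is an honest power series and the Leibniz rule \eqref{leibnizc} is clean, the coefficients $A_m$ here are built from the non-commuting operators $R_{q_0}, R_{\bar q_0}$, which do not obey a simple product rule; one must instead argue directly from the expansion \eqref{expansion} that multiplying a function with low-order vanishing by the fixed regular function $(1-\bar q_1*f(q))^{-*}$ — regular and nonvanishing near $q_0$ because $1-|q_1|^2>0$ — preserves the vanishing of $A_1,\dots,A_{2n-1}$, e.g. by observing that $[(q-x_0)^2+y_0^2]^n$ divides $f-q_1$ in $\reg$ and that this divisibility is stable under $*$-multiplication by a zero-free regular function (a consequence of theorem \ref{conjugatezeros} and corollary \ref{productzeros}, since the symmetrized zero set of $\mr_{q_0}^{s\,n}$ is exactly the sphere $S_{q_0}$ with multiplicity $n$). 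Once this divisibility bookkeeping is in place, the analytic part is a routine repetition of theorems \ref{teo1} and \ref{MainSchwarzPick}.
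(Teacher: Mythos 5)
Your proposal is correct and follows essentially the same route as the paper: form $\tilde f=(f-f(q_0))*(1-\overline{f(q_0)}*f)^{-*}$, transfer the vanishing of $A_1,\dots,A_{2n-1}$ (resp.\ $A_{2n}$) to $\tilde f$, factor out $[(q-x_0)^2+y_0^2]^n$, and run the maximum-modulus argument of theorem \ref{teo1} together with lemma \ref{modulusproduct}. The only (cosmetic) difference is in the coefficient-transfer step: the paper computes $(R_{\bar q_0}R_{q_0})^k\tilde f=(R_{\bar q_0}R_{q_0})^kf*(1-\overline{f(q_0)}*f)^{-*}$ by a direct induction using associativity of $*$, whereas you argue via divisibility by the central real polynomial $[(q-x_0)^2+y_0^2]^n$ — both are valid and essentially equivalent, and your parenthetical identity $(R_{\bar q_0}R_{q_0})^n\tilde f(q_0)=[1-\overline{f^s(q_0)}]^{-n}(R_{\bar q_0}R_{q_0})^nf(q_0)$ is an unneeded (and not quite correct) aside, since the stated inequalities keep the $*$-product unevaluated.
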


\begin{proof}
Let us set $\tilde f (q)= (f(q)-f(q_0))*(1-\overline{f(q_0)}*f(q))^{-*}$. This defines a regular $\tilde f : \B \to \B$ with $\tilde f(q_0) = 0$ and
$$R_{q_0}\tilde f(q)=(q-q_0)^{-*}*\tilde f(q)=R_{q_0}f(q) *(1-\overline{f(q_0)}*f(q))^{-*}.$$
Thanks to the hypothesis $A_1 = R_{q_0}f(\bar q_0)=0$, we conclude that $R_{q_0}\tilde f(\bar q_0) =0$ so that 
\begin{align*}
R_{\bar q_0}R_{q_0}\tilde f(q)&=(q-\bar q_0)^{-*}*R_{q_0}\tilde f(q) = [(q-x_0)^2 + y_0^2]^{-1} \tilde f(q)\\
&=R_{\bar q_0}R_{q_0}f(q) *(1-\overline{f(q_0)}*f(q))^{-*}.
\end{align*}
Iterating this process, we conclude that
\begin{align*}
R_{q_0}(R_{\bar q_0}R_{q_0})^{k-1}\tilde f(q)&= [(q-x_0)^2 + y_0^2]^{-k+1} (q-q_0)^{-*} *\tilde f(q)\\
&=R_{q_0}(R_{\bar q_0}R_{q_0})^{k-1}f(q) *(1-\overline{f(q_0)}*f(q))^{-*},\\
(R_{\bar q_0}R_{q_0})^k\tilde f(q) &= [(q-x_0)^2 + y_0^2]^{-k} \tilde f(q)\\
&=(R_{\bar q_0}R_{q_0})^k f(q) *(1-\overline{f(q_0)}*f(q))^{-*}
\end{align*}
for all $1\leq k\leq n$, and that the coefficients $\widetilde A_m$ of the expansion of $\tilde f$ vanish for all $0\leq m\leq 2n-1$.
As a consequence, $\tilde f (q) = [(q-x_0)^2 + y_0^2]^n g(q)$ for some regular function $g$. Let us consider
$$\mr_{q_0}^s(q) = (1-q\bar q_0)^{-s} (q-q_0)^s = [(1-qx_0)^2+(qy_0)^2]^{-1} [(q-x_0)^2 + y_0^2]$$
and its $n$th power $(\mr_{q_0}^s)^n$: then
$$(\mr_{q_0}^s(q))^{-n} \tilde f(q) = [(1-qx_0)^2+(qy_0)^2]^ng(q)$$ 
is a regular function $h$ on $\B$. Now, $|h| =|\mr_{q_0}^s(q)|^{-n} |\tilde f(q)|\leq |\mr_{q_0}^s(q)|^{-n}$ where $\mr_{q_0}^s = \mr_{q_0}*\mr_{q_0}^c =  \mr_{q_0}*\mr_{\bar q_0}$ maps $\partial \B$ to $\partial\B$ by lemma \ref{modulusproduct}. Reasoning as in theorem \ref{teo1}, we can prove that $|h|\leq 1$ and equations \eqref{pari} follow by direct computation.

Finally, if $A_{2n}=0$ then $\widetilde A_{2n} = 0$ and $g(q) = [(q-x_0)^2 + y_0^2]^{-n} \tilde f(q)$ has a zero at $q_0$. Thus, $h(q) = [(1-qx_0)^2+(qy_0)^2]^ng(q)$ is a regular function $\B \to \B$ having a zero at $q_0$. By theorem \ref{MainSchwarzPick}, $|h| \leq |\mr_{q_0}|$ and equations \eqref{dispari} follow (making use of lemma \ref{modulusproduct}).
\end{proof}

We believe that the two results proven in this section, however technical their statements may appear, show that the quaternionic Schwarz-Pick lemma establishes a strong link between the differential and multiplicative properties of the regular self-maps of $\B$. This recalls the complex setting, but in a many-sided way that reflects the richness of the non-commutative context.


\bibliography{SchwarzPick}

\begin{thebibliography}{10}

\bibitem{abate}
M.~Abate.
\newblock {\em Iteration theory of holomorphic maps on taut manifolds}.
\newblock Research and Lecture Notes in Mathematics. Complex Analysis and
  Geometry. Mediterranean Press, Rende, 1989.

\bibitem{ahlforsmoebius}
L.~V. Ahlfors.
\newblock M{\"o}bius transformations in {${\bf R}^n$} expressed through
  {$2\times 2$} matrices of {C}lifford numbers.
\newblock {\em Complex Variables Theory Appl.}, 5(2-4):215--224, 1986.

\bibitem{poincare}
C.~Bisi and G.~Gentili.
\newblock {M}\"obius transformations and the {P}oincar\'e distance in the
  quaternionic setting.
\newblock {\em Indiana Univ. Math. J.}, 58(6):2729--2764, 2009.

\bibitem{volumeindam}
C.~Bisi and C.~Stoppato.
\newblock Regular vs. classical {M}\"obius transformations of the quaternionic
  unit ball.
\newblock In G.~Gentili, I.~Sabadini, M.~V. Shapiro, F.~Sommen, and D.~C.
  Struppa, editors, {\em Advances in Hypercomplex Analysis}, Springer INdAM
  Series. Springer, Milan, 2013.

\bibitem{librosommen}
F.~Brackx, R.~Delanghe, and F.~Sommen.
\newblock {\em Clifford analysis}, volume~76 of {\em Research Notes in
  Mathematics}.
\newblock Pitman (Advanced Publishing Program), Boston, MA, 1982.

\bibitem{advancesrevised}
F.~Colombo, G.~Gentili, I.~Sabadini, and D.~Struppa.
\newblock Extension results for slice regular functions of a quaternionic
  variable.
\newblock {\em Adv. Math.}, 222(5):1793--1808, 2009.

\bibitem{fueter1}
R.~Fueter.
\newblock Die {F}unktionentheorie der {D}ifferentialgleichungen {$\Delta u=0$}
  und {$\Delta\Delta u=0$} mit vier reellen {V}ariablen.
\newblock {\em Comment. Math. Helv.}, 7(1):307--330, 1934.

\bibitem{fueter2}
R.~Fueter.
\newblock \"{U}ber die analytische {D}arstellung der regul\"aren {F}unktionen
  einer {Q}uaternionenvariablen.
\newblock {\em Comment. Math. Helv.}, 8(1):371--378, 1935.

\bibitem{zeros}
G.~Gentili and C.~Stoppato.
\newblock Zeros of regular functions and polynomials of a quaternionic
  variable.
\newblock {\em Michigan Math. J.}, 56(3):655--667, 2008.

\bibitem{powerseries}
G.~Gentili and C.~Stoppato.
\newblock Power series and analyticity over the quaternions.
\newblock {\em Math. Ann.}, 352(1):113--131, 2012.

\bibitem{cras}
G.~Gentili and D.~C. Struppa.
\newblock A new approach to {C}ullen-regular functions of a quaternionic
  variable.
\newblock {\em C. R. Math. Acad. Sci. Paris}, 342(10):741--744, 2006.

\bibitem{advances}
G.~Gentili and D.~C. Struppa.
\newblock A new theory of regular functions of a quaternionic variable.
\newblock {\em Adv. Math.}, 216(1):279--301, 2007.

\bibitem{rigidity}
G.~Gentili and F.~Vlacci.
\newblock Rigidity for regular functions over {H}amilton and {C}ayley numbers
  and a boundary {S}chwarz {L}emma.
\newblock {\em Indag. Math. (N.S.)}, 19(4):535--545, 2008.

\bibitem{perotti}
R.~Ghiloni and A.~Perotti.
\newblock Slice regular functions on real alternative algebras.
\newblock {\em Adv. Math.}, 226(2):1662--1691, 2011.

\bibitem{maass}
H.~Maass.
\newblock Automorphe {F}unktionen von meheren {V}er\"anderlichen und
  {D}irichletsche {R}eihen.
\newblock {\em Abh. Math. Sem. Univ. Hamburg}, 16:72--100, 1949.

\bibitem{perotti2009}
A.~Perotti.
\newblock Regular quaternionic functions and conformal mappings.
\newblock {\em Cubo}, 11(1):123--143, 2009.

\bibitem{pick2}
G.~Pick.
\newblock \"{U}ber die {B}eschr\"ankungen analytischer {F}unktionen, welche
  durch vorgegebene {F}unktionswerte bewirkt werden.
\newblock {\em Math. Ann.}, 77(1):7--23, 1915.

\bibitem{pick1}
G.~Pick.
\newblock \"{U}ber eine {E}igenschaft der konformen {A}bbildung kreisf\"ormiger
  {B}ereiche.
\newblock {\em Math. Ann.}, 77(1):1--6, 1915.

\bibitem{rowen}
L.~H. Rowen.
\newblock {\em Ring theory}.
\newblock Academic Press Inc., Boston, MA, student edition, 1991.

\bibitem{poli}
C.~Stoppato.
\newblock Poles of regular quaternionic functions.
\newblock {\em Complex Var. Elliptic Equ.}, 54(11):1001--1018, 2009.

\bibitem{moebius}
C.~Stoppato.
\newblock Regular {M}oebius transformations of the space of quaternions.
\newblock {\em Ann. Global Anal. Geom.}, 39(4):387--401, 2011.

\bibitem{expansion}
C.~Stoppato.
\newblock A new series expansion for slice regular functions.
\newblock {\em Adv. Math.}, 231(3-4):1401--1416, 2012.

\bibitem{sudbery}
A.~Sudbery.
\newblock Quaternionic analysis.
\newblock {\em Math. Proc. Cambridge Philos. Soc.}, 85(2):199--224, 1979.

\bibitem{vahlen}
K.~T. Vahlen.
\newblock Ueber {B}ewegungen und complexe {Z}ahlen.
\newblock {\em Math. Ann.}, 55(4):585--593, 1902.

\bibitem{vesentini}
E.~Vesentini.
\newblock {\em Capitoli scelti della teoria delle funzioni olomorfe}.
\newblock Unione Matematica Italiana, Bologna, 1984.

\bibitem{yang}
Y.~Yang and T.~Qian.
\newblock Schwarz lemma in {E}uclidean spaces.
\newblock {\em Complex Var. Elliptic Equ.}, 51(7):653--659, 2006.

\end{thebibliography}

\bibliographystyle{abbrv}


\end{document}